\newtheorem{theorem}{Theorem}[section]
\newtheorem{lemma}{Lemma}[section]
\newtheorem{remark}{Remark}[section]
\newtheorem{proposition}{Proposition}[section]
\numberwithin{equation}{section}
\numberwithin{figure}{section}
\begin{document}

\title[Stability of supersonic contact discontinuity in a nozzle]
{Stability of supersonic contact discontinuity for two-dimensional
steady compressible Euler flows in a finite nozzle}

\author{Feimin Huang }
\address{Institute of Applied Mathematics,
Academy of Mathematics and Systems Science,
Chinese Academy of Sciences, Beijing 100190,
 China}
\email{\tt fhuang@amt.ac.cn}

\author{Jie Kuang }
\address{ Wuhan Institute of Physics and Mathematics,
Chinese Academy of Sciences, Wuhan 430071,
 China; and
Institute of Applied Mathematics,
Academy of Mathematics and Systems Science,
Chinese Academy of Sciences, Beijing 100190,
 China}
\email{\tt jkuang@wipm.ac.cn,  \ \  jkuang@amss.ac.cn}

\author{Dehua Wang}
\address{Department of Mathematics,
University of Pittsburgh,
Pittsburgh, PA 15260, USA.}
\email{\tt dwang@math.pitt.edu}

\author{Wei Xiang}
\address{Department of Mathematics,
City University of Hong Kong,
Kowloon, Hong Kong,   China}
\email{\tt  weixiang@cityu.edu.hk}

\keywords{Contact discontinuity, supersonic flow, free boundary, compressible Euler equation, finitely long nozzle.}
\subjclass[2010]{35B07, 35B20, 35D30; 76J20, 76L99, 76N10}
\date{}

\begin{abstract}
In this paper, we study the stability of supersonic contact discontinuity
for the two-dimensional steady compressible Euler flows in a finitely long nozzle of varying
cross-sections. We formulate the problem as an initial-boundary value problem with the contact
discontinuity as a free boundary. To deal with the free boundary value problem,
we employ the Lagrangian transformation to straighten the contact discontinuity
and then the free boundary value problem becomes a fixed boundary value problem.
We develop  an iteration scheme and establish some novel estimates of solutions for the first order of
hyperbolic equations on a cornered domain. Finally, by using the inverse
Lagrangian transformation and under the assumption that the incoming flows
and the nozzle walls are smooth perturbations of the background state,
we prove that the original free boundary problem admits a unique weak solution which
is a small perturbation of the background state and the solution consists of two smooth supersonic
flows separated by a smooth contact discontinuity.
\end{abstract}

\maketitle
\section{Introduction }\setcounter{equation}{0}
The two-dimensional steady  full Euler system of compressible flows is of the following form:
\begin{eqnarray}\label{eq:1.1}
\left\{
\begin{array}{llll}
     \partial_x(\rho u)+\partial_y(\rho v)=0, \\
     \partial_{x}(\rho u^{2}+p)+\partial_{y}(\rho uv)=0, \\
   \partial_{x}(\rho uv)+\partial_{y}(\rho v^{2}+p)=0, \\
     \partial_{x}\big((\rho E+p)u\big)+\partial_{y}\big((\rho E+p)v\big)=0,
     \end{array}
     \right.
\end{eqnarray}
where $(u,v),\ p,\ \rho$  stand for the velocity, pressure, and density, respectively, and
\begin{equation*}
E=\frac{1}{2}(u^2+v^2)+\frac{p}{(\gamma-1)\rho} 
\end{equation*}
is the energy with adiabatic exponent $\gamma>1$. Here, the pressure $p$ and density $\rho$
satisfy the   constitutive relation:
$p=A(S)\rho^{\gamma}$, 
with $S$  the entropy. %
If the solution of \eqref{eq:1.1} is classical, \emph{i.e.}, $U=(u, v, p, \rho)\in \mathcal{C}^{1}$,
 we have the following transport equations:
\begin{eqnarray*}  
(u,v)\cdot\nabla B=0,\quad\quad  (u,v)\cdot\nabla \big(\frac{p}{\rho^{\gamma}}\big)=0,
\end{eqnarray*}
where
\begin{eqnarray*}   
 B= \frac{1}{2}(u^2+v^2)+\frac{\gamma p}{(\gamma-1)\rho}.
\end{eqnarray*}
This means that the entropy is preserved and the Bernoulli law holds along each streamline. 
We denote the sonic speed of the flow by $c=\sqrt{\frac{\gamma p}{\rho}}$.
The flow is called supersonic if $u^{2}+v^{2}>c^{2}$,  subsonic if $u^{2}+v^{2}<c^{2}$, and sonic if
 $u^{2}+v^{2}=c^{2}$.
For the supersonic flow the system \eqref{eq:1.1} is hyperbolic, while for the subsonic flow
the system \eqref{eq:1.1}  is elliptic, and in general the system \eqref{eq:1.1}  is of  hyperbolic-elliptic mixed type.

\par In this paper, we are concerned with the stability of supersonic contact discontinuity
governed by the two-dimensional steady full Euler equations in a finite nozzle (see Fig. 1.\ref{fig11}).
The domain in the nozzle can be described by
\begin{equation*}
\Omega:=\big\{(x,y)\in \mathbb{R}^{2}: g_{-}(x)<y<g_{+}(x),\ 0<x<L \big\}, 
\end{equation*}
where $g_{-},\ g_{+}$  are functions of $x$.
The lower and upper boundaries of the nozzle
are denoted by $\Gamma_{-}$ and $\Gamma_{+}$, \emph{i.e.},
\begin{eqnarray*}
\Gamma_{\pm}:=\big\{(x,y):y=g_{\pm}(x),\ 0<x<L \big\}.
\end{eqnarray*}

\vspace{10pt}
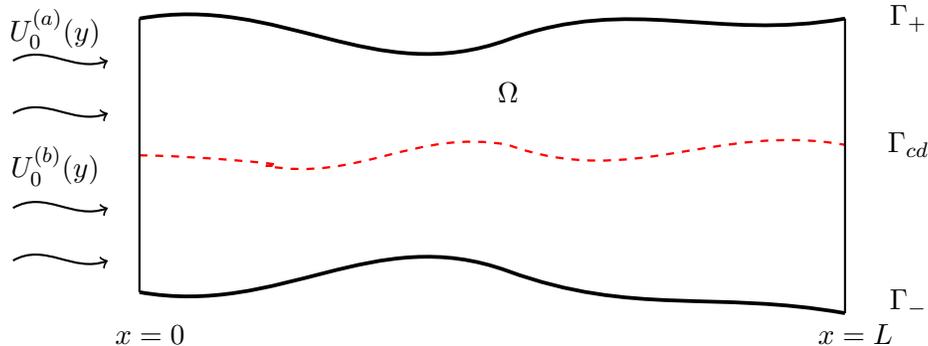
\begin{figure}[ht] \label{fig11}
\begin{center}
\begin{tikzpicture}[scale=1.4]
\draw [thick][->] (-5.7,1.9)to[out=30, in=-160](-4.8, 1.9);
\draw [thick][->] (-5.7,1.4)to[out=30, in=-160](-4.8, 1.4);
\draw [thick][->] (-5.7,0.5)to[out=30, in=-160](-4.8, 0.5);
\draw [thick][->] (-5.7,0.0)to[out=30, in=-160](-4.8, 0.0);
\draw [line width=0.05cm](-4.5, 2.3)to
[out=10,in=-160](-1,2.1)to [out=20,in=-170](2.2,2.3);
\draw [line width=0.05cm](-4.5, -0.3)to
[out=-10,in=160](-1,-0.1)to [out=-20,in=170](2.2,-0.5);
\draw [line width=0.03cm][dashed][red] (-4.5,1) to [out=0, in=0]
(-3.3, 0.9)to [out=-10,in=170](-1,1.1)to [out=-20,in=170](2.2,1.1);
\draw [thick] (-4.5,2.3)--(-4.5, -0.3);
\draw [thick] (2.2,2.3)--(2.2, -0.5);
\node at (2.8,2.3) {$\Gamma_{+}$};
\node at (2.8,-0.4) {$\Gamma_{-}$};
\node at (2.8, 1.1) {$\Gamma_{cd}$};
\node at (-1.0, 1.6) {$\Omega$};
\node at (-5.3, 2.2) {$U^{(a)}_{0}(y)$};
\node at (-5.3, 0.9) {$U^{(b)}_{0}(y)$};
\node at (-4.4, -0.7) {$x=0$};
\node at (2.3, -0.7) {$x=L$};
\end{tikzpicture}
\end{center}
\caption{Supersonic contact discontinuity in a finite  nozzle}
\end{figure}

There have been many studies in literature on the subsonic flows in  nozzles.
For the subsonic flow in infinitely or finitely long nozzles,   L. Bers in \cite{bl} first studied the existence
and uniqueness of subsonic irrotational flow in a two-dimensional infinitely long nozzle for a given appropriate
incoming mass flux. This argument was verified rigorously in \cite{xx1} for two-dimensional  nozzles,  in \cite{xx2} for
axially symmetric nozzles, and in \cite{dxy} for multi-dimensional nozzles.
The existence and uniqueness of isentropic flow  was obtained in a two-dimensional  infinitely long nozzle with small non-zero vorticity
 in \cite{xy3}, and in the axi-symmetric case with zero swirl  in \cite{dd}. For the full
Euler flow, similar results were obtained in \cite{cdx} and then \cite{dl, dxx}.
M. Bae in \cite{bm} showed the stability of straight contact discontinuities in a two-dimensional  almost
flat and infinity long nozzles. Recently, the sign condition and the small assumption were removed in \cite{chwx}
by applying the compensated compactness argument. The sonic-subsonic limit for subsonic flow in infinitely long
nozzles was also studied in \cite{cdsw,chw,hww,wx1,wx2}, while the incompressible limit was studied in \cite{chwx1}.
For the subsonic flow in a finitely long nozzle, there are only a few results. It is well-known that it
is ill-posed when the pressure is assigned  at both the inlet and the outlet. So how to find an appropriate boundary
condition is one of the major issues; see  \cite{dwx} for more details.
Another important problem for the steady Euler flows is the stability of the transonic shock in nozzles.
G.-Q. Chen and M. Feldman \cite{cf1,cf2} studied the multi-dimensional transonic shocks in a finite or infinitly long straight
nozzles for the potential flows,  and some further related results  were obtained in \cite{cf4, xy1}. The
transonic shock governed by the Euler equations was studied by  S.-X. Chen in \cite{csx1}, where he studied the two-dimensional Euler
flows with the uniform Bernoulli constant for the incoming flow with some symmetric structure and proved  the stability of
the transonic shock in a finitely long duct. Then the result was  generalized in \cite{y}   to
  finitely long nozzles as  small perturbations of a straight one. The uniqueness for the transonic
shocks in two-dimensional steady Euler flows in a finitely long duct was considered in \cite{fly} for a
class of piecewise $C^{1}$ smooth functions. For the three-dimensional case we   refer to \cite{csx2, cy, lxy, xyy, xy2,xy3}.
The transonic shock in finitely and infinitely long nozzles was studied in \cite{ccf,ccs}
for  the full Euler system. Moreover, there are also some works on the transonic shock in a divergent
or a De laval nozzle in \cite{bf,csx3,lxy1, lxy2} and references therein.

For the supersonic flow in nozzles there are only a few results. The reason is that it is very difficult
to control the solutions due to the reflection of the characteristics. The solutions are expected to blow up if the
nozzles are sufficiently long. Actually, we show that the non-constant irrotational flow will generally blow up in the semi-infinitely long flat nozzle  in the Appendix (see Theorem \ref{thm:6.1}).  The problem of the supersonic flow in nozzles is different from the problem of supersonic flow past a solid wall
(see in \cite{czz,qx}), or the local stability of straight contact discontinuity without boundary
(see in \cite{wy1,wy2}), for which the decay estimates are expected or there is no reflection on
the characteristics.  So far almost all the results on the supersonic flow in nozzles are about the nozzles with special structure, for example,
the expanding nozzles.  Chen and  Qu in \cite{cq} studied steady supersonic potential flow with rarefaction waves in
an expanding and infinitely long nozzles and found a sufficient condition to determine whether the vacuum state appears or not,
which is devoted to one of the questions proposed by  Courant and  Friedrichs in \cite{cf}. The smooth potential flow
without rarefaction waves was considered in \cite{wx3} for the two-dimensional case and in \cite{xy} for the multi-dimensional case.

To our best knowledge this paper is the first on the supersonic steady contact discontinuity in nozzles.
The problem of two-dimensional supersonic contact discontinuity in nozzles governed by the steady Euler equations
can be formulated as a nonlinear initial-boundary value problem for the first order quasilinear hyperbolic system, with
the contact discontinuity as a free boundary as well as the two nozzle walls as fixed boundaries.
For the study of the nonlinear stability of supersonic contact discontinuity in a finitely long nozzles,
the main difficulties  are caused by the facts that the states on the both sides of the contact discontinuity are unknown
and it is not clear how to locate the position of the contact discontinuity due to loss of the normal velocity on the contact discontinuity.
However, noticing that the tangent of the contact discontinuity is parallel to the velocity of the flow from the both sides,
 we can apply the Euler-Lagrange coordinates transformation to fix the free boundary and reformulate the original free boundary value problem
as a fixed boundary value problem in the Lagrangian coordinates. To solve the fixed boundary value problem,
we shall develop new ideas and arguments inspired by \cite{ly}. Since the Bernoulli  law and entropy are
invariant along the stream lines for the $C^{1}$-smooth flows, the Bernoulli's function and the
entropy can be determined completely by the incoming flows. With this observation, we  can introduce the generalized
Riemann invariants $z_{\pm}$ (see Section 3.2 below) which are invariant along the corresponding characteristics
to reduce the Euler system in the Lagrangian coordinates  to a diagonal form. 
To solve the equivalent boundary value problem for the Riemann invariants $z_{\pm}$  we shall
introduce an iteration scheme to construct a sequence of approximate solutions. Then, the remaining tasks are to
show that the iteration scheme is well-defined, and the sequence of the approximate solutions is convergent.
For this purpose we shall establish various $C^{2}$   estimates for the approximate solutions of the initial boundary value problem,
 using the characteristics method carefully for all cases depending on the reflection of the characteristics
on the nozzle walls or on the contact discontinuity. The reason we need the $C^2$-estimates is that the inhomogeneous terms in the difference of equations of
two  approximate solutions depend on the first order derivatives of the approximates solutions.
With these estimates, one can show that the iteration map is a contraction map in $C^{1}$-norm, which leads to the convergence of the
 approximate solutions. The limit is unique, and is actually the unique solution
of the nonlinear initial-boundary value problem.

The rest of the paper is organized as follows.
In section 2, we introduce the mathematical problem   and state the main results of this paper. In section 3,
we further reformulate this problem by introducing the Euler-Lagrangian coordinates transformation and the Riemann invariants
to reduce the system  to a fixed boundary value problem governed by equations of diagonalized form, and finally introduce
the iteration scheme by linearizing the nonlinear boundary value problem near the background solution. Section 4 is devoted
to the study of the linearized boundary value problem introduced in Section 3 by deriving the \emph{a prior} estimates
for the approximate solutions case by case. In section 5, we show the convergence of the approximate solutions
obtained in Section 4 and then complete the proof of the main theorem by applying the Banach contraction mapping theorem.
Finally, in the Appendix, we give an example to show that the solution for the infinity long nozzle
will blow-up in general.
\bigskip

\section{Problems and Main Result}\setcounter{equation}{0}

In this section, we shall formulate the supersonic contact discontinuity problem
in a finitely long nozzle in the Eulerian coordinates and introduce the main results.
First, we   consider a special case of a supersonic contact discontinuity
in a finitely long flat nozzle (see Fig. \ref{fig2}).
The special solution is the background solution in this paper.
 \vspace{2pt}
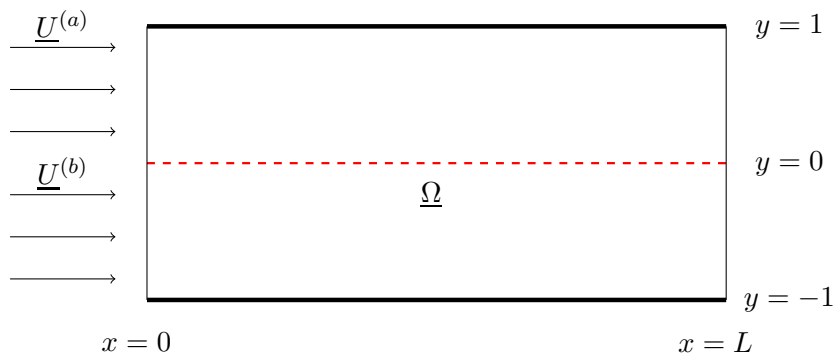
\begin{figure}[ht]
\begin{center}
\begin{tikzpicture}[scale=1.4]
\draw [thin][->] (-5,0.4) --(-4,0.4);
\draw [thin][->] (-5,0) --(-4,0);
\draw [thin][->] (-5,-0.4) --(-4,-0.4);
\draw [thin][->] (-5,-1.0) --(-4,-1.0);
\draw [thin][->] (-5.0,-1.4) --(-4,-1.4);
\draw [thin][->] (-5.0,-1.8) --(-4,-1.8);
\draw [line width=0.06cm] (-3.7,0.6) --(1.8,0.6);
\draw [line width=0.03cm][dashed][red] (-3.7,-0.7) --(1.8,-0.7);
\draw [line width=0.06cm] (-3.7,-2) --(1.8,-2);
\draw [thin] (-3.7,-2) --(-3.7,0.6);
\draw [thin] (1.8,-2) --(1.8,0.6);
\node at (2.4, -0.7) {$y=0$};
\node at (-4.5, 0.6) {$\underline{U}^{(a)}$};
\node at (-4.5, -0.8) {$\underline{U}^{(b)}$};
\node at (2.4, 0.6) {$y=1$};
\node at (2.4, -2) {$y=-1$};
\node at (-3.8, -2.4) {$x=0$};
\node at (1.7, -2.4) {$x=L$};
\node at (-1, -1) {$\underline{\Omega}$};
\end{tikzpicture}
\caption{Supersonic contact discontinuity in a finitely long straight nozzle}\label{fig2} 
\end{center}
\end{figure}

Suppose that the nozzle with flat boundaries is described as:
\begin{equation*} 
\underline{\Omega}:=\big\{(x,y)\in \mathbb{R}^{2}: 0<x<L,\ -1<y<1 \big\}.
\end{equation*}
The two layers of uniform flow in $\underline{\Omega}$ separated by a contact discontinuity  are:
 \begin{equation*}
\underline{U}^{(i)}:=(\underline{u}^{(i)}, 0,\underline{p}^{(i)}, \underline{\rho}^{(i)})^{\top},\qquad  i=a,\ b.
\end{equation*}
They are two constants states. $\underline{U}^{(a)}$ is the top layer
and $\underline{U}^{(b)}$ is the bottom layer. 
The horizontal velocity and density of both the top and bottom layers
$\underline{u}^{(i)},\ \underline{\rho}^{(i)},  i= a, b$ are positive.
The pressure $\underline{p}^{(i)}$ of both the top and bottom layers are given by the same positive constant, \emph{i.e.}, $\underline{p}^{(a)}=\underline{p}^{(b)}=\underline{p}$.
Finally, both the two layers are supersonic, \emph{i.e.}, there is a constant $\delta_{0}>0$, such that
\begin{equation*}
\underline{u}^{(i)}-\underline{c}^{(i)}>\delta_{0},
\end{equation*}
where $\underline{c}^{(i)}=\sqrt\frac{\gamma \underline{p}^{(i)}}{\underline{\rho}^{(i)}}$, for $(i= a, b)$.
 Let
\begin{eqnarray}\label{eq:2.4}
\underline{U}(x,y)=\left\{
\begin{array}{llll}
\underline{U}^{(a)},\quad (x, y)\in (0,L)\times (0,1),\\
\underline{U}^{(b)},\quad (x, y)\in (0,L)\times (-1,0).
\end{array}
     \right.
\end{eqnarray}
Then, $\underline{U}$ is a weak solution of the Euler system \eqref{eq:1.1} in $\underline{\Omega}$ in the distribution sense,
with the discontinuity line $y = 0$ as the supersonic contact discontinuity.
The line $y = 0$ divides the domain $\underline{\Omega}$ into two parts,
$\underline{\Omega}^{(i)}(i= a, b)$,
\begin{eqnarray*}
&&\underline{\Omega}^{(a)}:=\underline{\Omega}\cap\big\{0<y<1 \big\},\quad
\underline{\Omega}^{(b)}:=\underline{\Omega}\cap\big\{ -1<y<0 \big\},
\end{eqnarray*}
and the flows are supersonic in $\underline{\Omega}^{(a)}$ and in $\underline{\Omega}^{(b)}$, respectively.
We call the solution $\underline{U}(x,y)$
defined by \eqref{eq:2.4} is the background solution.

\subsection{Mathematical problems and the main results}

The incoming flow $U_{0}$
at the inlet $x=0$, which is divided by $y=0$, is given by

\begin{eqnarray}\label{eq:2.5}
U_{0}(y)=\left\{
\begin{array}{llll}
U^{(a)}_{0}(y),\quad 0<y<g_{+}(0),\\
U^{(b)}_{0}(y),\quad g_{-}(0)<y<0,
\end{array}
\right.
\end{eqnarray}
where $U^{(i)}_{0}(y)=(u^{(i)},v^{(i)}, p^{(i)}, \rho^{(i)}),\ i=a, b,$ satisfies
\begin{eqnarray}\label{eq:2.6}
\Big(\frac{v^{(a)}_{0}}{u^{(a)}_{0}}\Big)(0)=\Big(\frac{v^{(b)}_{0}}{u^{(b)}_{0}}\Big)(0), \quad \ \
p^{(a)}_{0}(0)=p^{(b)}_{0}(0).
\end{eqnarray}
So the point $(0,0)$ is the starting point of the contact discontinuity.

Let the location of the contact discontinuity be $\Gamma_{cd}=\{y=g_{cd}(x)\}$, which divides the domain $\Omega$ into two subdomains: 
\begin{eqnarray}\label{eq:2.7}
\Omega^{(a)}:=\Omega\cap\big\{g_{cd}(x)<y<g_{+}(x)\big\},\ \
\Omega^{(b)}:=\Omega\cap\big\{g_{-}(x)<y<g_{cd}(x) \big\}.
\end{eqnarray}
Let $U^{(i)}=(u^{(i)}, v^{(i)}, p^{(i)}, \rho^{(i)})$ be smooth and supersonic in $\Omega^{(i)},\ i=a, b$, respectively.

On the boundaries $\Gamma_{-},\ \Gamma_{+}$, the flow
satisfies the impermeable slip boundary conditions:
\begin{eqnarray}\label{eq:2.8}
(u, v)\cdot \mathbf{n}_{-}\big|_{\Gamma_{-}}=0,\ (u, v)\cdot \mathbf{n}_{+}\big|_{\Gamma_{+}}=0,
\end{eqnarray}
where $\mathbf{n}_{-}=(g'_{-},-1)$ and $\mathbf{n}_{+}=(-g'_{+},1)$ represent
the outer normal vectors of the low and upper boundaries
$\Gamma_{-},\ \Gamma_{+}$, respectively.
In addition, along the contact discontinuity $\Gamma_{cd}$,
the following conditions which are derived from Rankine-Hugoniot conditions hold:
\begin{eqnarray}\label{eq:2.8a}
(u, v)\cdot \mathbf{n}_{cd}\big|_{\Gamma_{cd}}=0,
\quad 
[p]\big|_{\Gamma_{cd}}=0,
\end{eqnarray}
where $\mathbf{n}_{cd}=(g'_{cd},-1)$ is the normal vector on $\Gamma_{cd}$ and $[\ ]$
denotes the jump of the quantity between the two states  across the contact discontinuity.

\par According to the above setting, we will study 
the following nonlinear free boundary problem with the supersonic contact discontinuity.

\medskip
\par $\mathbf{Problem }$ $\mathbf{A.}$ Given a supersonic incoming flow $U_{0}(y)$
at the entrance $\{x=0\}$ by \eqref{eq:2.5}, 
which satisfies \eqref{eq:2.6} and \eqref{eq:2.8}, find a piecewise smooth solutions
$(U(x,y),g_{cd}(x))$ separated by a contact discontinuity $\Gamma_{cd}$ such that, 
the flows are smooth and satisfy the Euler equations \eqref{eq:1.1} in $\Omega^{(a)}\cup\Omega^{(b)}$, the slip boundary condition \eqref{eq:2.8} on $\Gamma_{\pm}$, and the Rankine-Hugoniot condition \eqref{eq:2.8a} on  $\Gamma_{cd}$; moreover, the flows are supersonic, \emph{i.e.}, $\sqrt{u^{2}+v^{2}}>c$ in $\Omega^{(a)}\cup\Omega^{(b)}$,
where $c$ is the sonic speed. 

\medskip
We remark that a function $U(x,y)=(u,v,p,\rho)^{\top}$ of \emph{Problem A} is a weak solution
of the Euler equations \eqref{eq:1.1} in the weak sense: 
\begin{eqnarray*}
\iint_{\Omega}
W(U)\partial_x\zeta+H(U)\partial_y\zeta dxdy= 0,
\end{eqnarray*}
for any $\zeta \in C_0^{\infty}(\Omega)$, where
\begin{eqnarray*}
 W(U)=\Big(\rho u, \rho u^2+p, \rho uv, (\rho E+p)u\Big)^{\top},\quad 
H(U)=\Big(\rho v, \rho uv,\rho v^2+p, (\rho E+p)v \Big)^{\top}.
\end{eqnarray*}


\medskip
\par Our main result in this paper is the following theorem.

\begin{theorem}\label{thm:2.1}
There exist    constants $\varepsilon_{0}>0$  and   $C_{0}>0$ depending only on $\underline{U},\ L$ and $\gamma$,
such that, for any $\varepsilon \in (0, \varepsilon_{0})$, if the incoming flow and the boundaries of the nozzle satisfy
\begin{eqnarray*}
\begin{aligned}
\big\|U^{(a)}_{0}-\underline{U}^{(a)}\big\|_{C^{2}([0, g_{+}(0))]}
+\big\|U^{(b)}_{0}-\underline{U}^{(b)}\big\|_{C^{2}([0, g_{+}(0)])}\\
+\big\|g_{+}-1\big\|_{C^{3}([0, L])}
+\big\|g_{-}+1\big\|_{C^{3}([0,L])}\leq \varepsilon,
\end{aligned}
\end{eqnarray*}
then \emph{Problem A} admits a unique solution $U(x,y)$ with contact discontinuity $y=g_{cd}(x)$ satisfying:\\
\rm (i)\ Solution $U$ consists of two smooth supersonic flows $U^{(a)} \in C^{1}(\Omega^{(a)})$
and  $U^{(b)} \in C^{1}(\Omega^{(b)})$ with $y=g_{cd}(x)$ as the contact discontinuity,
and the following estimate holds:
\begin{eqnarray*}
\begin{aligned}
\big\|U^{(a)}-\underline{U}^{(a)}\big\|_{C^{1}(\Omega^{(a)})}
+\big\|U^{(b)}-\underline{U}^{(b)}\big\|_{C^{1}(\Omega^{(b)})}
\leq C_{0}\varepsilon,
\end{aligned}
\end{eqnarray*}
\rm (ii)\ The contact discontinuity $y=g_{cd}(x)$ is a stream line 
and satisfies
\begin{eqnarray*}
\big\|g_{cd}\big\|_{C^{2}([0, L])}\leq C_{0}\varepsilon,
\end{eqnarray*}
where $ g_{cd}(0)=0$.
\end{theorem}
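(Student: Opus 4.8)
The plan is to transform the free-boundary problem into a fixed-boundary problem, diagonalize the hyperbolic system, and then run a contraction-mapping iteration with careful characteristic estimates on the cornered domain.

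\medskip

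\noindent\textbf{Step 1: Euler--Lagrange transformation to fix the free boundary.} Since along each streamline the Bernoulli function $B$ and the entropy $S=p/\rho^\gamma$ are conserved for $C^1$ flows, and since both the nozzle walls and the contact discontinuity $\Gamma_{cd}$ are streamlines (by \eqref{eq:2.8} and the first condition in \eqref{eq:2.8a}), the natural move is to introduce the Lagrangian coordinate $(x,y)\mapsto(x,\psi)$ where $\psi$ is the stream function defined by $\partial_y\psi=\rho u$, $\partial_x\psi=-\rho v$. In these coordinates the two walls become fixed lines $\psi=\mathrm{const}$ and the contact discontinuity becomes the fixed line $\psi=0$ (normalizing the mass flux through the lower layer). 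The domain $\Omega$ becomes a fixed rectangle $\underline\Omega'$ (up to the horizontal extent $0<x<L$), splitting $\Omega^{(a)},\Omega^{(b)}$ into two fixed strips. The Euler system \eqref{eq:1.1} rewrites as a first-order quasilinear hyperbolic system in $(x,\psi)$; the remaining unknowns are effectively two scalar fields (say the flow angle $\theta=\arctan(v/u)$ and the pressure $p$, with $B,S$ slaved to the incoming data), plus the recovery of $g_{cd}$ by integrating $g_{cd}'(x)=\tan\theta$ along $\psi=0$ with $g_{cd}(0)=0$.

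\medskip

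\noindent\textbf{Step 2: Diagonalization via generalized Riemann invariants.} On the supersonic region the $2\times2$ reduced system has two real distinct characteristic families; I would introduce generalized Riemann invariants $z_\pm$ (built from $\theta$ and $p$, with coefficients depending on the frozen $B,S$) so that the principal part becomes diagonal: $\partial_x z_\pm + \lambda_\pm(z_+,z_-;x,\psi)\,\partial_\psi z_\pm = f_\pm$, with $\lambda_- < 0 < \lambda_+$ near the background. The boundary conditions transcribe as: on the outer walls, one incoming invariant is a prescribed reflection of the outgoing one (from \eqref{eq:2.8} plus the wall shape $g_\pm$); on $\psi=0$ the condition $[p]=0$ couples $z_\pm^{(a)}$ with $z_\pm^{(b)}$ through two transmission relations (again a reflection-type coupling since the normal-velocity condition is automatically satisfied by the Lagrangian straightening). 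At $x=0$ the invariants are given by the incoming data \eqref{eq:2.5}. This is a standard-looking initial-boundary value problem for a diagonal hyperbolic system on the rectangle, but with the corners $(0,\pm1)$ and $(0,0)$ where initial and boundary data meet, and with characteristics reflecting off three interfaces.

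\medskip

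\noindent\textbf{Step 3: Iteration scheme and a priori $C^2$ estimates.} Linearize around the background $\underline U$: given an approximate solution $z^{(n)}$ with small $C^2$-norm, freeze the coefficients $\lambda_\pm$, $f_\pm$ and the boundary-reflection coefficients at $z^{(n)}$, and solve the resulting linear problem for $z^{(n+1)}$ by the method of characteristics, tracing each characteristic back (through possibly several reflections off the walls and the line $\psi=0$) to the inlet $x=0$. Because the domain has length $L<\infty$, each characteristic undergoes only finitely many reflections and each reflection coefficient is $1+O(\varepsilon)$, so the solution does not blow up and one gets $\|z^{(n+1)} - \underline z\|_{C^1}\le C\varepsilon$. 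The crucial point — and the main obstacle — is to close the argument in $C^2$ rather than $C^1$: the difference $z^{(n+1)}-z^{(n)}$ satisfies a linear transport system whose source terms involve $\partial z^{(n)}$ and $\partial z^{(n-1)}$, so to show the iteration map is a contraction in $C^1$ one needs uniform control of the second derivatives of the iterates. This forces a delicate case-by-case differentiated-characteristic analysis near the three families of corners and along each reflection, matching compatibility conditions at $(0,\pm1)$ and $(0,0)$ to first order so that $\partial z^{(n)}$ stays bounded; the reflection off $\psi=0$ is the subtlest because it couples the two layers. Once the uniform $C^2$ bound is in hand, the contraction in $C^1$ follows by the same characteristic tracing applied to differences, the fixed point gives the unique diagonalized solution, and transforming back via the inverse Lagrangian map (whose Jacobian is $\rho u = \underline{\rho}\,\underline u + O(\varepsilon)>0$, hence a valid change of variables) yields $U\in C^1(\Omega^{(a)})\times C^1(\Omega^{(b)})$ with the stated estimate, while $g_{cd}$ and its $C^2$ bound come from integrating $\tan\theta$ along $\psi=0$. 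Uniqueness for \emph{Problem A} follows since any weak solution with small $C^1$-norm has streamline walls/interface, hence admits the same Lagrangian reduction and must coincide with the fixed point.
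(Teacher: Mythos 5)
Your proposal follows essentially the same route as the paper: the Lagrangian (stream-function) change of variables to fix the walls and the contact line, diagonalization via generalized Riemann invariants in $(\arctan(v/u),p)$ with $B,S$ slaved to the inlet data, an iteration with frozen coefficients solved by characteristics with reflections off the two walls and $\psi=0$, uniform $C^2$ bounds precisely because the difference equations carry first derivatives of the iterates, a $C^1$ contraction, and recovery of $U$ and $g_{cd}$ through the inverse Lagrangian map. This matches the paper's Sections 3--5 (including the cornered-domain, case-by-case characteristic estimates and the finite-reflection argument over the length $L$), so no substantive divergence or gap to report.
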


\bigskip

\section{Mathematical Formulation and the Iteration Scheme}

\subsection{Mathematical problems in the Lagrangian coordinates }\setcounter{equation}{0}

Since the tangent of the contact discontinuity $\Gamma_{cd}$ is parallel to the velocity of   
the flow on the both sides of $\Gamma_{cd}$ by \eqref{eq:2.7}, it is convenient to apply the Euler-Lagrange
coordinates transformation to fix the free boundary $\Gamma_{cd}$ and hence reformulate \emph{Problem A}
into a fixed boundary value problem in the Lagrangian coordinates.

\par Assume that $(U(x,y), g_{cd}(x))$ is a solution of \emph{Problem A}. 
By the conservation of mass, \emph{i.e.}, $\eqref{eq:1.1}_1$,
for any $0<x<L$, it holds that
\begin{eqnarray*}
\begin{aligned}
\int^{g_{+}(x)}_{g_{cd}(x)}\rho u(x,\tau)d\tau=m^{(a)},
\quad \int^{g_{cd}(x)}_{g_{-}(x)}\rho u(x,\tau)d\tau=m^{(b)},
\end{aligned}
\end{eqnarray*}
and that
\begin{eqnarray*}
\begin{aligned}
\int^{g_{+}(x)}_{g_{-}(x)}\rho u(x,\tau)d\tau=m^{(a)}+m^{(b)},
\end{aligned}
\end{eqnarray*}
where $$m^{(a)}=\int^{g_{+}(0)}_{0}\rho^{(a)}_{0} u^{(a)}_{0}d\tau, \quad
m^{(b)}=\int^{0}_{g_{-}(0)}\rho^{(b)}_{0} u^{(b)}_{0}d\tau, $$ are the mass fluxes at the inlet above and below the contact discontinuity respectively.

\par Let
\begin{eqnarray*}
\begin{aligned}
\eta(x,y)=\int^{y}_{g_{-}(x)}\rho u(x,\tau)d\tau-m^{(b)}.
\end{aligned}
\end{eqnarray*}
By $\eqref{eq:1.1}_1$, it is easy to see that
\begin{eqnarray*}
\begin{aligned}
\frac{\partial \eta(x,y)}{\partial x}=-\rho v,\quad  \frac{\partial \eta(x,y)}{\partial y}=\rho u.
 \end{aligned}
\end{eqnarray*}
Now, we can introduce the Lagrangian coordinates transformation $\mathcal{L}$ as
\begin{eqnarray}\label{eq:3.5}
\mathcal{L} : \ \left\{
\begin{array}{llll}
\xi &= x, \\
\eta &= \eta(x,y).
\end{array}
\right.
\end{eqnarray}
Notice that
\begin{equation*}
\frac{\partial (\xi,\eta)}{\partial (x,y)}=
\Big(
\begin{array}{ccc}
  1 & 0 \\
 -\rho v & \rho u\\
\end{array}
\Big),
\end{equation*}
so the Lagrangian coordinates transformation is invertible if and only if $\rho u\neq0$, which is guaranteed in Theorem \ref{thm:3.1}, \emph{i.e.}, Remark \ref{rem:3.1}.

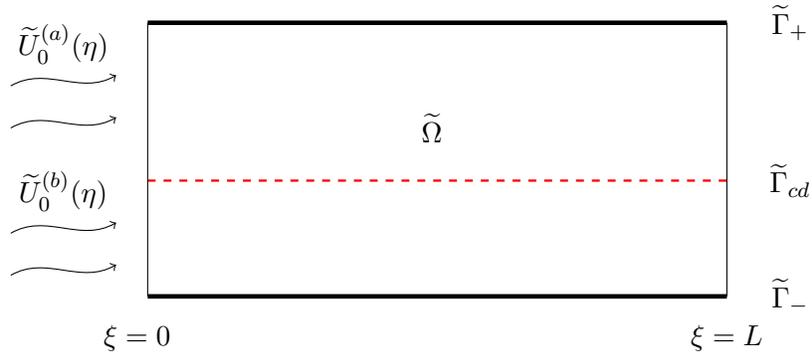
\begin{figure}[ht]
\begin{center}
\begin{tikzpicture}[scale=1.4]
\draw [thin][->] (-5,0)to[out=30, in=-150](-4,0.1);
\draw [thin][->] (-5,-0.4)to[out=30, in=-150](-4,-0.3);
\draw [thin][->] (-5.0,-1.4)to[out=30, in=-150](-4,-1.3);
\draw [thin][->] (-5.0,-1.8)to[out=30, in=-150](-4,-1.7);
\draw [line width=0.06cm] (-3.7,0.6) --(1.8,0.6);
\draw [line width=0.03cm][dashed][red] (-3.7,-0.9) --(1.8,-0.9);
\draw [line width=0.06cm] (-3.7,-2) --(1.8,-2);
\draw [thin] (-3.7,-2) --(-3.7,0.6);
\draw [thin](1.8,-2) --(1.8,0.6);
\node at (-4.5, 0.4) {$\widetilde{U}^{(a)}_{0}(\eta)$};
\node at (-4.5, -1.0) {$\widetilde{U}^{(b)}_{0}(\eta)$};
\node at (2.4, 0.6) {$\widetilde{\Gamma}_{+}$};
\node at (2.4, -0.9) {$\widetilde{\Gamma}_{cd}$};
\node at (2.4, -2) {$\widetilde{\Gamma}_{-}$};
\node at (-1, -0.4) {$\widetilde{\Omega}$};
\node at (1.8, -2.4) {$\xi=L$};
\node at (-3.8, -2.4) {$\xi=0$};
\end{tikzpicture}
\end{center}
\caption{ Supersonic contact discontinuity nozzle flows in Lagrangian coordinates}\label{fig3.1}
\end{figure}
\vspace{5pt}

Under this coordinates transformation, domain $\Omega$
becomes 
\begin{equation*}
\widetilde{\Omega}=\big\{(\xi,\eta)\in \mathbb{R}^{2}:
0<\xi<L,\ -m^{(b)}<\eta<m^{(a)} \big\},
\end{equation*}
and the lower and upper boundaries are
\begin{eqnarray*}
\begin{aligned}
\widetilde{\Gamma}_{-}:=\big\{(\xi,\eta):\eta=-m^{(b)},\ 0<\xi<L\big\},\ \
\widetilde{\Gamma}_{+}:=\big\{(\xi,\eta):\eta=m^{(b)},\  0<\xi<L\big\}.
\end{aligned}
\end{eqnarray*}
On $\Gamma_{cd}$, we have 
\begin{eqnarray*}
\eta(x, g_{cd}(x))=\int^{g_{cd}(x)}_{g_{-}(x)}\rho u(x,\tau)d\tau-m^{(b)}=0.
\end{eqnarray*}
 Thus, the free boundary $\Gamma_{cd}$ is transformed into the following fixed straight line:
\begin{eqnarray}\label{eq:3.13}
\widetilde{\Gamma}_{cd}:=\big\{(\xi,\eta):\eta=0,\ 0<\xi<L\big\}.
\end{eqnarray}
Define
\begin{eqnarray*}
\begin{aligned}
&\widetilde{\Omega}^{(a)}:=\widetilde{\Omega}\cap\{0<\eta<m^{(a)}\big\},
&\widetilde{\Omega}^{(b)}:=\widetilde{\Omega}\cap\big\{-m^{(b)}<\eta<0\big\},
\end{aligned}
\end{eqnarray*}
and let
\begin{eqnarray*}
\widetilde{U}^{(i)}(\xi,\eta)=(\widetilde{u}^{(i)},
\widetilde{v}^{(i)}, \widetilde{p}^{(i)}, \widetilde{\rho}^{(i)})^{\top}(\xi,\eta), \quad
(\xi,\eta)\in\widetilde{\Omega}^{(i)}, \  i=a, b,
\end{eqnarray*}
 be the corresponding solutions in $\widetilde{\Omega}^{(a)}$ and $\widetilde{\Omega}^{(b)}$, respectively.
%
Then the corresponding background state in the new coordinates corresponding to \eqref{eq:2.4} is
\begin{eqnarray}\label{eq:3.8}
\underline{\widetilde{U}}(\xi,\eta)=\left\{
\begin{array}{llll}
\underline{U}^{(a)},\quad (\xi, \eta)\in (0,L)\times (0, \underline{m}^{(a)}),\\
\underline{U}^{(b)},\quad (\xi, \eta)\in (0,L)\times (-\underline{m}^{(b)},0),
\end{array}
\right.
\end{eqnarray}
where $\underline{m}^{(i)}=\underline{\rho}^{(i)}\underline{u}^{(i)},\ i=a, b$.
The flow at the inlet $\xi=0$ is given by
\begin{eqnarray}\label{eq:3.9}
\widetilde{U}_{0}(\eta)=\left\{
\begin{array}{llll}
\widetilde{U}^{(a)}_{0}(\eta),\quad 0<\eta<m^{(a)},\\
\widetilde{U}^{(b)}_{0}(\eta),\quad -m^{(b)}<\eta<0,
\end{array}
\right.
\end{eqnarray}
where $\widetilde{U}^{(i)}_{0}(\eta)=(\widetilde{u}^{(i)}_{0},\widetilde{v}^{(i)}_{0},
\widetilde{p}^{(i)}_{0}, \widetilde{\rho}^{(i)}_{0})(\eta),\ i=a, b$,   satisfies
\begin{eqnarray}\label{eq:3.10}
\begin{aligned}
\Big(\frac{\widetilde{v}^{(a)}_{0}}{\widetilde{u}^{(a)}_{0}}\Big)(0)
=\Big(\frac{\widetilde{v}^{(b)}_{0}}{\widetilde{u}^{(b)}_{0}}\Big)(0), \quad
\widetilde{p}^{(a)}_{0}(0)=\widetilde{p}^{(b)}_{0}(0).
\end{aligned}
\end{eqnarray}

Notice that
$$\partial_{x} =\partial_{\xi}- \rho v \partial_{\eta},\quad
\partial_{y} =\rho u \partial_{\eta},$$
then system \eqref{eq:1.1} in the Lagrangian coordinates becomes
\begin{eqnarray}\label{eq:3.11}
\begin{aligned}
\left\{
\begin{array}{llll}
\partial_{\xi}\Big(\frac{1}{\widetilde{\rho} \widetilde{u}}\Big)
- \partial_{\eta}\Big(\frac{\widetilde{v}}{\widetilde{u}}\Big)=0,\\
\partial_{\xi}\Big(\widetilde{u}+\frac{\widetilde{p}}{\widetilde{\rho} \widetilde{u}}\Big)
-\partial_{\eta}\Big(\frac{\widetilde{p} \widetilde{v}}{\widetilde{u}}\Big)=0,\\
\partial_{\xi}\widetilde{v}+\partial_{\eta}\widetilde{p}=0,\\
\widetilde{p}=A(\widetilde{S})\widetilde{\rho}^{\gamma-1},
\end{array}
\right.
\end{aligned}
\end{eqnarray}
together with the Bernoulli law:
\begin{eqnarray}\label{eq:3.12}
\frac{1}{2}(\widetilde{u}^2+\widetilde{v}^2)+\frac{\gamma \widetilde{p}}{(\gamma-1)\widetilde{\rho}}
=\left\{
\begin{array}{llll}
\widetilde{B}^{(a)}_{0}(\eta) ,    &\ \ \ \ (\xi,\eta)\in \widetilde{\Omega}^{(a)}, \\
\widetilde{B}^{(b)}_{0}(\eta),  &\ \ \ \ (\xi,\eta)\in \widetilde{\Omega}^{(b)}.
\end{array}
\right.
\end{eqnarray}
Here we use the fact that $\widetilde{B}^{(i)}_{0}(\eta)$ 
for $i= a, b$ are conserved along the streamlines by $\eqref{eq:1.1}_4$, thus they depend only on  $\gamma$
and the incoming flow $\widetilde{U}_{0}(\eta)$ at the entrance $\xi=0$.

 The boundary conditions in \eqref{eq:2.7} become
\begin{eqnarray}\label{eq:3.16}
\frac{\widetilde{v}^{(a)}}{\widetilde{u}^{(a)}}\Big|_{\widetilde{\Gamma}_{+}}=g'_{+}(\xi),
\quad\quad \frac{\widetilde{v}^{(b)}}{\widetilde{u}^{(b)}}\Big|_{\widetilde{\Gamma}_{-}}=g'_{-}(\xi),
\end{eqnarray}
and the Rankine-Hugoniot conditions on  $\widetilde{\Gamma}_{cd}$ read as
\begin{eqnarray}\label{eq:3.17}
\frac{\widetilde{v}^{(a)}}{\widetilde{u}^{(a)}}\Big|_{\widetilde{\Gamma}_{cd}}
=\frac{\widetilde{v}^{(b)}}{\widetilde{u}^{(b)}}\Big|_{\widetilde{\Gamma}_{cd}}=g'_{cd}(\xi), \qquad
\widetilde{p}^{(a)}\big|_{\widetilde{\Gamma}_{cd}}=\widetilde{p}^{(b)}\big|_{\widetilde{\Gamma}_{cd}},
\end{eqnarray}
 which indicates that $\frac{\widetilde{v}}{\widetilde{u}}$ and $\widetilde{p}$ are continuous
  across  $\widetilde{\Gamma}_{cd}$.

\par Then, the free boundary value problem, \emph{Problem A}, in the Eulerian coordinates 
can be reformulated as the following nonlinear fixed boundary value problem in the Lagrangian coordinates.

\par $\mathbf{Problem }$ $\mathbf{B.}$ Given a supersonic incoming flow $\widetilde{U}_{0}(\eta)$
at the entrance $\{\xi=0\}$ by \eqref{eq:3.9} 
satisfying \eqref{eq:3.10} and \eqref{eq:3.16}, find a piecewise smooth solutions
$\widetilde{U}(\xi,\eta)$ with a contact discontinuity along the straight line $\widetilde{\Gamma}_{cd}$ such that,
the flows are smooth and supersonic,  and satisfy the Euler equation \eqref{eq:3.11} and \eqref{eq:3.12} in $\widetilde{\Omega}^{(a)}\cup\widetilde{\Omega}^{(b)}$, the slip boundary condition \eqref{eq:3.16} on $\widetilde{\Gamma}_{\pm}$, and the Rankine-Hugoniot condition \eqref{eq:3.17} on  $\widetilde{\Gamma}_{cd}$. 

Thus Theorem \ref{thm:2.1} becomes the following theorem: 
\begin{theorem}\label{thm:3.1}
There exist 
two constants $\tilde{\varepsilon}_{0}>0$  and   $\widetilde{C}_{0}>0$
depending only on $\underline{\widetilde{U}}, \ L$ and  $\gamma$, such that for any
$\tilde{\varepsilon} \in (0, \tilde{\varepsilon}_{0})$
if  
\begin{eqnarray}\label{eq:3.19}
\begin{aligned}
\big\|\widetilde{U}^{(a)}_{0}-\underline{U}^{(a)}\big\|_{C^{2}([0, m^{(a)}])}
+\big\|\widetilde{U}^{(b)}_{0}-\underline{U}^{(b)}\big\|_{C^{2}([-m^{(b)},0])}\\
 +\big\|g_{-}+1\big\|_{C^{3}([0,L])}
+\big\|g_{+}-1\big\|_{C^{3}([0,L])}\leq \tilde{\varepsilon},
\end{aligned}
\end{eqnarray}
then \emph{Problem B} 
admits a unique piecewise smooth solution
$\widetilde{U}(\xi,\eta)$ 
consisting of
two smooth supersonic flow $\widetilde{U}^{(a)} \in C^{1}(\widetilde{\Omega}^{(a)})$
and $\widetilde{U}^{(b)} \in C^{1}(\widetilde{\Omega}^{(b)})$ with $\eta=0$ as the discontinuity.
Moreover, 
\begin{eqnarray}\label{eq:3.22}
\begin{aligned}
\big\|\widetilde{U}^{(a)}-\underline{U}^{(a)}\big\|_{C^{1}(\widetilde{\Omega}^{(a)})}
+\big\|\widetilde{U}^{(b)}-\underline{U}^{(b)}\big\|_{C^{1}(\widetilde{\Omega}^{(b)})}\leq \widetilde{C}_{0}\tilde{\varepsilon}.
\end{aligned}
\end{eqnarray}

\end{theorem}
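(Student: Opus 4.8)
The plan is to reduce \emph{Problem B} to a fixed-point problem for the generalized Riemann invariants $z_\pm$ and solve it by a contraction-mapping argument, following the scheme outlined in the Introduction. First I would diagonalize the system \eqref{eq:3.11}: using the Bernoulli law \eqref{eq:3.12} and the fact that the entropy $A(\widetilde{S})$ is determined along each streamline (i.e. in the Lagrangian variable $\eta$) by the incoming data, the unknowns $\widetilde{u},\widetilde{v},\widetilde{p},\widetilde{\rho}$ can be expressed through two scalar functions $z_+,z_-$, and the three conservation laws \eqref{eq:3.11} collapse to a $2\times2$ diagonal quasilinear system $\partial_\xi z_\pm + \lambda_\pm(z_+,z_-;\eta)\,\partial_\eta z_\pm = 0$ in each subdomain $\widetilde{\Omega}^{(a)},\widetilde{\Omega}^{(b)}$, with the characteristic speeds $\lambda_\pm$ close to their constant background values $\underline{\lambda}_\pm$ (which are of opposite sign, one pointing toward each lateral wall, since the flow is supersonic and $\underline v=0$). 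The boundary conditions \eqref{eq:3.16} become relations expressing the incoming invariant in terms of the outgoing one and $g'_\pm$ on $\widetilde\Gamma_\pm$; on the interface $\widetilde\Gamma_{cd}$ the two conditions in \eqref{eq:3.17} couple the $(a)$ and $(b)$ flows by matching the flow angle $\widetilde v/\widetilde u$ and the pressure $\widetilde p$, which again can be rewritten as: the invariant of each side entering its own domain is a smooth function of the invariant leaving the other side. The inlet data at $\xi=0$ prescribes both $z_\pm$.

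Next I would set up the iteration. Given an approximate state $z^{(n)}$ (hence approximate, known characteristic fields $\lambda_\pm^{(n)}$), define $z^{(n+1)}$ by solving the \emph{linear} transport equations along the characteristics of $\lambda_\pm^{(n)}$, integrating from the inlet and reflecting off $\widetilde\Gamma_+,\widetilde\Gamma_-,\widetilde\Gamma_{cd}$ according to the boundary/interface relations above. The a priori estimates of Section 4 — obtained by a careful case analysis of how characteristics reflect off the walls or the contact discontinuity on the cornered domain $(0,L)\times(-m^{(b)},m^{(a)})$ — give, for $\widetilde\varepsilon$ small, a bound $\|z^{(n+1)}-\underline z\|_{C^2}\le \widetilde C_0\widetilde\varepsilon$ uniformly in $n$, so the iteration map preserves a small $C^2$-ball $\mathcal N$. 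The $C^2$ control is essential because the equation satisfied by the difference $z^{(n+1)}-z^{(n)}$ has a source term involving first derivatives of $z^{(n)}$ and $z^{(n-1)}$; with the uniform $C^2$ bound in hand, one estimates this difference in $C^1$ along the (now comparable) characteristic families and obtains $\|z^{(n+1)}-z^{(n)}\|_{C^1}\le \tfrac12\|z^{(n)}-z^{(n-1)}\|_{C^1}$ for $\widetilde\varepsilon$ small enough — here the smallness of $L$, or rather the smallness of the perturbation, is what prevents the contraction constant, which picks up a factor from each reflection, from exceeding $1$. Hence $\{z^{(n)}\}$ is Cauchy in $C^1$, its limit $z$ lies in $\mathcal N$ (the $C^2$ bound passes to the limit by weak-$*$ compactness / interpolation), and $z$ solves the nonlinear diagonal system with all boundary and interface conditions; translating back gives $\widetilde U\in C^1(\widetilde\Omega^{(a)})\times C^1(\widetilde\Omega^{(b)})$ satisfying \eqref{eq:3.11}, \eqref{eq:3.12}, \eqref{eq:3.16}, \eqref{eq:3.17}, supersonic and $O(\widetilde\varepsilon)$-close to $\underline{\widetilde U}$, which is exactly \eqref{eq:3.22}. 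Uniqueness follows from the same contraction estimate applied to the difference of two solutions.

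The main obstacle is the a priori $C^1$ and $C^2$ estimates on the cornered domain $\widetilde\Omega$ with reflecting characteristics. Unlike the supersonic-flow-past-a-wall or the free contact discontinuity problems mentioned in the Introduction, here the $\lambda_+$ and $\lambda_-$ characteristics bounce back and forth between $\widetilde\Gamma_{cd}$ and the walls $\widetilde\Gamma_\pm$ infinitely often (in principle), and at each reflection the derivatives of $z$ are multiplied by a reflection coefficient coming from differentiating the boundary/interface relations \eqref{eq:3.16}–\eqref{eq:3.17}; one must show these coefficients are $1+O(\widetilde\varepsilon)$ and that, on the \emph{finite} strip $0<\xi<L$, only finitely many reflections occur, with the accumulated amplification controlled by a constant depending only on $\underline{\widetilde U},L,\gamma$. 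The corners $(0,\pm m^{(a)})$, $(0,-m^{(b)})$ where the inlet meets the walls, and the corner where the inlet meets $\widetilde\Gamma_{cd}$, require compatibility of the data — guaranteed by \eqref{eq:3.10} together with \eqref{eq:3.16} at $\xi=0$ — so that $z\in C^2$ up to the corner rather than merely piecewise smooth. Handling the interface $\widetilde\Gamma_{cd}$ is the subtlest piece: because the two \emph{Rankine–Hugoniot} conditions there relate the $(a)$-side and $(b)$-side invariants simultaneously, a reflection at $\widetilde\Gamma_{cd}$ feeds information from one subdomain into the other, so the estimate must be run for the coupled pair $(z^{(a)},z^{(b)})$ and closed jointly; once that coupling is shown to be a contraction in the same small-$\widetilde\varepsilon$ regime, the rest of the argument is routine.
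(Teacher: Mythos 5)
Your proposal is correct and follows essentially the same route as the paper: reduction to the generalized Riemann invariants $z_{\pm}$ (with entropy and Bernoulli function fixed by the inlet data), a linearized iteration solved along the characteristics of the previous iterate with reflections at $\widetilde\Gamma_{\pm}$ and the coupled Rankine--Hugoniot relations on $\widetilde\Gamma_{cd}$, uniform $C^{2}$ a priori bounds obtained by a finite decomposition of the strip into reflection regions, and a $C^{1}$ contraction yielding existence and uniqueness, exactly as in Sections 3--5 of the paper (Theorem \ref{thm:4.1}, Proposition \ref{prop:5.1}, and Remarks \ref{rem:3.2}--\ref{rem:3.3}). The only minor caveat is that no smallness of $L$ is needed (nor used by the paper): the contraction comes solely from the smallness of the perturbation, with all constants allowed to depend on $L$, which your parenthetical self-correction already acknowledges.
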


\begin{remark}\label{rem:3.1}
Theorem \ref{thm:3.1} and Theorem \ref{thm:2.1} are equivalent  under the Lagrangian coordinates transformation $\mathcal{L}$.
The reason is that for small $\tilde{\varepsilon}$,   we have
\begin{equation*}
\det\Big(\frac{\partial (\xi,\eta)}{\partial (x,y)}\Big)=\rho u>0.
\end{equation*}
So the inverse Lagrangian coordinates transformation $\mathcal{L}^{-1}$ exists and can be given explicitly by
\begin{eqnarray*}
\mathcal{L}^{-1} : \ \left\{
\begin{array}{llll}
  x &= \xi, \\
   y &= \int^{\eta}_{-m^{(b)}}\Big(\frac{1}{\widetilde{\rho} \widetilde{u}}\Big)(\xi,\tau)d\tau+g_{-}(\xi).
     \end{array}
     \right.
\end{eqnarray*}
Therefore, if Theorem \ref{thm:3.1} holds, then one can  use  $\mathcal{L}^{-1}$ 
to define $U(x,y)=\widetilde{U}(\xi(x,y), \eta(x,y))$ and define 
\begin{eqnarray*}
g_{cd}(x)=\int^{0}_{-m^{(b)}}\Big(\frac{1}{\rho u}\Big)(x,\tau)d\tau+g_{-}(x), \  x\in [0, L].
\end{eqnarray*}
Obviously,
\begin{eqnarray*}
g'_{cd}(x)=\frac{v}{u}(x,g_{cd}(x)), \  x\in [0,L],
\end{eqnarray*}
and $g_{cd}'(x)\in C^{2}([0,L])$. It concludes the proof of Theorem \ref{thm:2.1}.
Therefore, in the rest of the paper, we only need to consider \emph{Problem B} and prove  
Theorem \ref{thm:3.1}.
\end{remark}

\subsection{Riemann invariants}
In this subsection, we will use the Riemann invariants to diagonalize the system \eqref{eq:3.11}.

\par First, the system \eqref{eq:3.11}  can be rewritten as the following 
first order non-divergence symmetric system.
\begin{eqnarray}\label{eq:3.24}
A(\widetilde{U})\partial_{\xi}\widetilde{U}+B(\widetilde{U})\partial_{\eta}\widetilde{U}=0,
\end{eqnarray}
where $\widetilde{U}(\xi, \eta)=(\widetilde{u}, \widetilde{v}, \widetilde{p})^{\top}(\xi, \eta)$
and
\begin{equation*}
A(\widetilde{U})=\left(
\begin{array}{ccc}
\widetilde{u} &0 & \frac{1}{\widetilde{\rho}} \\
0& \widetilde{u} & 0 \\
\frac{1}{\widetilde{\rho}} & 0 & \frac{\widetilde{u}}{\widetilde{c}^{2}\widetilde{\rho}^{2}}
\end{array}
\right), \quad
 B(\widetilde{U})=\left(
\begin{array}{ccc}
0 &0 & -\widetilde{v} \\
0& 0 & \widetilde{u} \\
-\widetilde{v} & \widetilde{u} & 0
\end{array}
\right).
\end{equation*}
%
The eigenvalues of \eqref{eq:3.24} are
\begin{eqnarray*}
\begin{aligned}
\lambda_{-}=\frac{\widetilde{\rho} \widetilde{u} \widetilde{c}^{2}}
{\widetilde{u}^{2}-\widetilde{c}^{2}}\Big(\frac{\widetilde{v}}{\widetilde{u}}
-\frac{\sqrt{\widetilde{u}^{2}+\widetilde{v}^{2}-\widetilde{c}^{2}}}{\widetilde{c}}\Big),\quad
\lambda_{0}=0,  \quad
\lambda_{+}=\frac{\widetilde{\rho} \widetilde{u} \widetilde{c}^{2}}
{\widetilde{u}^{2}-\widetilde{c}^{2}}\Big(\frac{\widetilde{v}}{\widetilde{u}}
+\frac{\sqrt{\widetilde{u}^{2}+\widetilde{v}^{2}-\widetilde{c}^{2}}}{\widetilde{c}}\Big),
\end{aligned}
\end{eqnarray*}
 and the associated right and left eigenvectors are
\begin{eqnarray*}
\begin{aligned}
r_{-}=\Big(\frac{\lambda_{-}}{\widetilde{\rho}}+\widetilde{v}, -\widetilde{v}, -\lambda_{-}\widetilde{u}\Big)^{\top},\quad
r_{0}=\big(\widetilde{u},\widetilde{v}, 0\big)^{\top},   \quad
r_{+}=\Big(\frac{\lambda_{+}}{\widetilde{\rho}}+\widetilde{v}, -\widetilde{v},
 -\lambda_{+}\widetilde{u}\Big)^{\top},
\end{aligned}
\end{eqnarray*}
and
\begin{eqnarray*}
l_{\pm}=(r_{\pm})^{\top}, \quad l_{0}=(r_{0})^{\top}.
\end{eqnarray*}
Multiply system \eqref{eq:3.24} by $l_{\pm}$ and $l_{0}$ to get
\begin{eqnarray}\label{eq:3.29}
\begin{aligned}
\widetilde{v}(\partial_{\xi}\widetilde{u}+\lambda_{\pm}\partial_{\eta}\widetilde{u})
-\widetilde{u}(\partial_{\xi}\widetilde{v}+\lambda_{\pm}\partial_{\eta}\widetilde{v})
\mp\frac{\sqrt{\widetilde{u}^{2}+\widetilde{v}^{2}-\widetilde{c}^{2}}}{\widetilde{\rho} \widetilde{c}}(\partial_{\xi}\widetilde{p}+\lambda_{\pm}\partial_{\eta}\widetilde{p})=0,
\end{aligned}
\end{eqnarray}
and
\begin{eqnarray}\label{eq:3.30}
\widetilde{u}\partial_{\xi}\widetilde{u}+\widetilde{v}\partial_{\xi}\widetilde{v}
+\frac{\partial_{\xi}\widetilde{p}}{\widetilde{\rho}}=0.
\end{eqnarray}
By the Bernoulli  law \eqref{eq:3.12}, \eqref{eq:3.30} can be reduced to
\begin{eqnarray*}
\partial_{\xi}\widetilde{S}=0,
\end{eqnarray*}
which implies that $\widetilde{S}$ is constant in $\widetilde{\Omega}^{(a)}$ or $\widetilde{\Omega}^{(b)}$, \emph{i.e.},
\begin{eqnarray}\label{eq:3.32}
\widetilde{S}=\widetilde{S}_{0}(\eta).
\end{eqnarray}
\par Denote
\begin{eqnarray*}
\widetilde{w}:=\frac{\widetilde{v}}{\widetilde{u}},
\ \ \ \Lambda:=\frac{\sqrt{\widetilde{u}^{2}+\widetilde{v}^{2}
-\widetilde{c}^{2}}}{\widetilde{\rho} \widetilde{c}\widetilde{u}^{2}},
\end{eqnarray*}
and define the operator,
\begin{eqnarray*}
\mathscr{D}_{-}=\partial_{\xi}+\lambda_{-}\partial_{\eta}, \quad
\mathscr{D}_{+}=\partial_{\xi}+\lambda_{+}\partial_{\eta}.
\end{eqnarray*}
Then, we can further rewrite the equations
\eqref{eq:3.29} as 
\begin{eqnarray}\label{eq:3.35}
\mathscr{D}_{-}\widetilde{w}-\Lambda \mathscr{D}_{-}\widetilde{p}=0, \quad
\mathscr{D}_{+}\widetilde{w}+\Lambda \mathscr{D}_{+}\widetilde{p}=0.
\end{eqnarray}

\begin{remark}\label{rem:3.2}
Once $\widetilde{w}$ and $\widetilde{p}$ are solved, then by the Bernoulli law \eqref{eq:3.12}, we can obtain
 $\widetilde{u}$ and $\widetilde{v}$ as the following:
\begin{eqnarray*}
\begin{aligned}
&\widetilde{u}=\sqrt{\frac{2\Big((\gamma-1)\widetilde{B}_{0}-\gamma \widetilde{A}_{0}^{\frac{1}{\gamma}}\widetilde{p}^{\frac{\gamma-1}{\gamma}}\Big)}
{(\gamma-1)\big(1+\widetilde{w}^{2}\big)}},\quad
\widetilde{v}=\widetilde{w}\sqrt{\frac{2\Big((\gamma-1)\widetilde{B}_{0}
-\gamma\widetilde{A}_{0}^{\frac{1}{\gamma}}
\widetilde{p}^{\frac{\gamma-1}{\gamma}}\Big)}{(\gamma-1)\big(1+\widetilde{w}^{2}\big)}},
\end{aligned}
\end{eqnarray*}
where $\widetilde{A}_{0}(\eta)=A(\widetilde{S}_{0}(\eta))$
and $\widetilde{S}_{0}(\eta)$ is given by \eqref{eq:3.32}.
\end{remark}

 \par Therefore, we only need to solve $\widetilde{w}$ and $\widetilde{p}$. Set $\mathscr{W}=(\widetilde{w},\widetilde{p})^{\top}$,
then the system \eqref{eq:3.35} can be rewritten in the following:
 \begin{eqnarray}\label{eq:3.37}
\partial_{\xi}\mathscr{W}+\mathscr{A}\partial_{\eta}\mathscr{W}=0,
\end{eqnarray}
 where
\begin{eqnarray}\label{eq:3.38}
\begin{aligned}
\mathscr{A}=\left(
\begin{array}{ccc}
\frac{\widetilde{\rho} \widetilde{c}^{2}\widetilde{v}}{\widetilde{u}^{2}-\widetilde{c}^{2}} & \frac{\widetilde{u}^{2}+\widetilde{v}^{2}-\widetilde{c}^{2}}{\widetilde{u}
(\widetilde{u}^{2}-\widetilde{c}^{2})} \\[8pt]
\frac{\widetilde{\rho}^{2} \widetilde{c}^{2}\widetilde{u}^{3}}
{\widetilde{u}^{2}-\widetilde{c}^{2}}  & \frac{\widetilde{\rho} \widetilde{c}^{2}\widetilde{v}}
{\widetilde{u}^{2}-\widetilde{c}^{2}}
\end{array}
\right).
\end{aligned}
\end{eqnarray}
Direct computation shows that the eigenvalues of \eqref{eq:3.38} are $\lambda_{\pm}$
 and the corresponding right eigenvectors $\tilde{r}_{\pm}$ are
\begin{eqnarray*}
&& \tilde{r}_{\pm}= \big(\sqrt{\widetilde{u}^{2}+\widetilde{v}^{2}-\widetilde{c}^{2}},
\pm\widetilde{\rho} \widetilde{c}\widetilde{u}^{2}\big)^{\top}.
\end{eqnarray*}
Then we can define the Riemann invariants $z_{\pm}$ for the system \eqref{eq:3.37} as the following form:
\begin{eqnarray}\label{eq:3.40a}
z_{-}=\arctan\widetilde{w}+\Theta(\widetilde{p}; \widetilde{S}_{0}, \widetilde{B}_{0}),
\ \ z_{+}=\arctan\widetilde{w}-\Theta(\widetilde{p}; \widetilde{S}_{0}, \widetilde{B}_{0}),
\end{eqnarray}
where
\begin{eqnarray}\label{eq:3.41}
\Theta(\widetilde{p}; \widetilde{S}_{0}, \widetilde{B}_{0})
=\int^{\widetilde{p}}\frac{\sqrt{2 \widetilde{B}_{0}-\frac{\gamma(\gamma+1)}
{\gamma-1}\widetilde{A}_{0}^{\frac{1}{\gamma}}
\tau^{\frac{\gamma-1}{\gamma}}}}{2\gamma^{\frac{1}{2}}\widetilde{A}_{0}^{-\frac{1}{2\gamma}}
\Big(\widetilde{B}_{0}-\frac{\gamma}{\gamma-1} \widetilde{A}_{0}^{\frac{1}{\gamma}}\tau^{1-\frac{1}{\gamma}}\Big)\tau^{\frac{\gamma+1}{2\gamma}}}d\tau.
\end{eqnarray}
By \eqref{eq:3.40a} and \eqref{eq:3.41}, we have
\begin{eqnarray}\label{eq:3.40}
\begin{aligned}
\widetilde{w}=\tan(\frac{z_{-}+z_{+}}{2}), \quad \Theta(\widetilde{p}; \widetilde{S}_{0}, \widetilde{B}_{0})
=\frac{1}{2}(z_{-}-z_{+}).
\end{aligned}
\end{eqnarray}

Set $z=(z_{-},z_{+})^{\top} $. By   the implicit function theorem, we have the following lemma.
\begin{lemma}\label{lem:3.1}
For any given $z$, if the flow is supersonic,  
then equation \eqref{eq:3.40}
admits a unique solution $\widetilde{p}=\widetilde{p}\big(z; \widetilde{S}_{0}, \widetilde{B}_{0}\big)$ and $\widetilde{w}=\widetilde{w}(z)$.
\end{lemma}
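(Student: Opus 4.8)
The plan is to apply the implicit function theorem to the two relations in \eqref{eq:3.40}, treating $\widetilde{S}_0$ and $\widetilde{B}_0$ as fixed parameters (they are determined by the incoming flow) and $z=(z_-,z_+)^\top$ as the independent variable. The first relation $\widetilde{w}=\tan\big(\tfrac{z_-+z_+}{2}\big)$ already gives $\widetilde{w}$ explicitly as a smooth function of $z$ near the background state, since $\tan$ is smooth away from its poles and the background value $\underline{w}=\underline{v}/\underline{u}=0$ is far from a pole; so the only real work is in the second relation $\Theta(\widetilde{p};\widetilde{S}_0,\widetilde{B}_0)=\tfrac12(z_--z_+)$.

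First I would compute $\partial_{\widetilde{p}}\Theta$ from the integral formula \eqref{eq:3.41}: by the fundamental theorem of calculus it equals the integrand evaluated at $\tau=\widetilde{p}$, namely
\begin{eqnarray*}
\partial_{\widetilde{p}}\Theta(\widetilde{p};\widetilde{S}_0,\widetilde{B}_0)
=\frac{\sqrt{2\widetilde{B}_0-\frac{\gamma(\gamma+1)}{\gamma-1}\widetilde{A}_0^{\frac{1}{\gamma}}\widetilde{p}^{\frac{\gamma-1}{\gamma}}}}
{2\gamma^{\frac12}\widetilde{A}_0^{-\frac{1}{2\gamma}}\Big(\widetilde{B}_0-\frac{\gamma}{\gamma-1}\widetilde{A}_0^{\frac{1}{\gamma}}\widetilde{p}^{1-\frac{1}{\gamma}}\Big)\widetilde{p}^{\frac{\gamma+1}{2\gamma}}}.
\end{eqnarray*}
The key point is that this quantity is nonzero whenever the flow is supersonic. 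Indeed, using $\widetilde{p}=\widetilde{A}_0\widetilde{\rho}^{\gamma}$, $\widetilde{c}^2=\gamma\widetilde{p}/\widetilde{\rho}$ and the Bernoulli law $\tfrac12(\widetilde{u}^2+\widetilde{v}^2)+\tfrac{\gamma\widetilde{p}}{(\gamma-1)\widetilde{\rho}}=\widetilde{B}_0$, one checks that the numerator radicand $2\widetilde{B}_0-\tfrac{\gamma(\gamma+1)}{\gamma-1}\widetilde{A}_0^{1/\gamma}\widetilde{p}^{(\gamma-1)/\gamma}$ is proportional to $\widetilde{u}^2+\widetilde{v}^2-\widetilde{c}^2>0$, while the factor $\widetilde{B}_0-\tfrac{\gamma}{\gamma-1}\widetilde{A}_0^{1/\gamma}\widetilde{p}^{1-1/\gamma}$ in the denominator is proportional to $\widetilde{u}^2+\widetilde{v}^2>0$; both are strictly positive and finite in the supersonic regime, and $\widetilde{p}>0$, so $\partial_{\widetilde{p}}\Theta>0$. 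Hence $\Theta(\cdot;\widetilde{S}_0,\widetilde{B}_0)$ is a strictly increasing smooth function of $\widetilde{p}$ on the supersonic range of pressures.

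With monotonicity in hand, the implicit function theorem (or simply the inverse function theorem applied to the strictly monotone map $\widetilde{p}\mapsto\Theta$) yields a unique smooth solution $\widetilde{p}=\widetilde{p}\big(z;\widetilde{S}_0,\widetilde{B}_0\big)$ of $\Theta(\widetilde{p};\widetilde{S}_0,\widetilde{B}_0)=\tfrac12(z_--z_+)$, provided the right-hand side lies in the range of $\Theta$; this is guaranteed for $z$ near the background value $\underline{z}$ by continuity, which is the setting in which the lemma is used. Substituting this $\widetilde{p}$ back into the first relation gives $\widetilde{w}=\widetilde{w}(z)=\tan\big(\tfrac{z_-+z_+}{2}\big)$ (which in fact does not depend on the parameters at all), completing the construction. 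I expect the only genuinely delicate step to be the algebraic verification that the numerator and denominator radicands in $\partial_{\widetilde p}\Theta$ reduce, via the Bernoulli law and the equation of state, to the manifestly positive quantities $\widetilde{u}^2+\widetilde{v}^2-\widetilde{c}^2$ and $\widetilde{u}^2+\widetilde{v}^2$ respectively; once that identity is established, nonvanishing of the Jacobian and hence the conclusion are immediate.
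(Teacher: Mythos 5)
Your proof is correct and follows essentially the same route as the paper: compute $\partial_{\widetilde p}\Theta$ from \eqref{eq:3.41}, observe it is strictly positive in the supersonic regime, and invoke the implicit function theorem (the explicit $\tan$ formula handling $\widetilde w$). The only difference is that you spell out the Bernoulli-law identification of the radicands with $\widetilde u^2+\widetilde v^2-\widetilde c^2$ and $\tfrac12(\widetilde u^2+\widetilde v^2)$, which the paper's proof simply asserts via the inequality \eqref{eq:3.48b}.
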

\begin{proof}
By the straightforward computation, we have
\begin{eqnarray}\label{eq:3.48a}
\begin{aligned}
\frac{\partial \widetilde{p}}{\partial z_{-}}=\frac{1}{2\partial_{\widetilde{p}}
\Theta(\widetilde{p}; \widetilde{S}_{0}, \widetilde{B}_{0})}, \quad
\frac{\partial \widetilde{p}}{\partial z_{+}}=-\frac{1}{2\partial_{\widetilde{p}}
\Theta(\widetilde{p}; \widetilde{S}_{0}, \widetilde{B}_{0})},
\end{aligned}
\end{eqnarray}
where
\begin{eqnarray}\label{eq:3.48b}
\partial_{\widetilde{p}}\Theta(\widetilde{p}; \widetilde{S}_{0}, \widetilde{B}_{0})
=\frac{\sqrt{2 \widetilde{B}_{0}-\frac{\gamma(\gamma+1)}{\gamma-1}\widetilde{A}_{0}^{\frac{1}{\gamma}}
\widetilde{p}^{\frac{\gamma-1}{\gamma}}}}{2\gamma^{\frac{1}{2}}\widetilde{A}_{0}^{-\frac{1}{2\gamma}}
\Big(\widetilde{B}_{0}-\frac{\gamma}{\gamma-1}\widetilde{A}_{0}^{\frac{1}{\gamma}}
\widetilde{p}^{1-\frac{1}{\gamma}}\Big)\widetilde{p}^{\frac{\gamma+1}{2\gamma}}}>0.
\end{eqnarray}
Then the lemma follows from the implicit function theorem.
\end{proof}

\par By Remark \ref{rem:3.2} and Lemma \ref{lem:3.1},  we only need to consider the following nonlinear boundary
value problem: 
\begin{eqnarray}\label{eq:3.51b}
(\widetilde{\mathbf{P}})\quad   \left\{
\begin{array}{llll}
\partial_{\xi}z^{a}+diag( \lambda^{a}_{+}, \lambda^{a}_{-})\partial_{\eta}z^{a}=0,
&\ \ \ in\ \widetilde{\Omega}^{(a)},   \\
\partial_{\xi}z^{b}+diag( \lambda^{b}_{+}, \lambda^{b}_{-})\partial_{\eta}z^{b}=0,
 &\ \ \ in\ \widetilde{\Omega}^{(b)},  \\
z^{a}=z^{a}_{0}(\eta),  &\ \ \  on\ \xi=0,\\
z^{b}=z^{b}_{0}(\eta),   &\ \ \  on\ \xi=0, \\
z^{a}_{-}+z^{a}_{+}=2\arctan g'_{+}(\xi), &\ \ \ on\ \widetilde{\Gamma}_{+},\\
z^{b}_{-}+z^{b}_{+}=2\arctan g'_{-}(\xi), &\ \ \ on\ \widetilde{\Gamma}_{-},\\
z^{a}_{-}+z^{a}_{+}=z^{b}_{-}+z^{b}_{+}, &\ \ \ on\ \widetilde{\Gamma}_{cd},\\
\widetilde{p}\big(z^{a}; \widetilde{S}^{a}_{0}, \widetilde{B}^{a}_{0}\big)
=\widetilde{p}\big(z^{b}; \widetilde{S}^{b}_{0}, \widetilde{B}^{b}_{0}\big),
&\ \ \ on\ \widetilde{\Gamma}_{cd}.
\end{array}
\right.
\end{eqnarray}
Here, $z^{(i)},\lambda^{(i)}_{\pm}, \widetilde{S}^{(i)}_{0}, \widetilde{B}^{(i)}_{0}, i=a, b, $
represent the states taking values in $\widetilde{\Omega}^{(a)}$ and $\widetilde{\Omega}^{(b)}$,
respectively.

Then \emph{Problem B} is reformulated into the following problem.

\medskip
\par $\mathbf{Problem }$ $\mathbf{C.}$ Given a supersonic
incoming flow $\big(z^{a}_{0}, z^{b}_{0}\big)(\eta)$ at the entrance $\{\xi=0\}$
satisfying \eqref{eq:3.10} and \eqref{eq:3.17}, find a piecewise smooth supersonic flow $\big(z^{a}, z^{b}\big)(\xi,\eta)$ of the nonlinear boundary value problem
$(\widetilde{\mathbf{P}})$. 

\medskip
\par Then, we have the following theorem for \emph{Problem  C}.
\begin{theorem}\label{thm:3.2}
There exist 
two constants $\tilde{\varepsilon}'_{0}>0$   and   $\widetilde{C}_{1}>0$ depending
only on $\underline{\widetilde{U}}, L$ and $\gamma$ such that, for each $\tilde{\varepsilon}_{1}
\in (0, \tilde{\varepsilon}'_{0})$, if
\begin{eqnarray}\label{eq:3.53}
\begin{aligned}
&\sum_{i=a,b}\Big(\|z^{i}_{0}-\underline{z}^{i}\|_{C^{2}(\Sigma_{i})}
+\|\widetilde{S}^{i}_{0}-\underline{S}^{i}\|_{C^{2}( \Sigma_{i})}
+\|\widetilde{B}^{i}_{0}-\underline{B}^{i}\|_{C^{2}( \Sigma_{i})}\Big)\\
&\ \quad\quad\quad\quad\quad\quad\quad\quad\quad\quad\quad\quad
+\|g_{+}-1\|_{C^{3}([0, L])}+\|g_{-}+1\|_{C^{3}([0, L])}\leq\tilde{\varepsilon}_{1},
\end{aligned}
\end{eqnarray}
where $\Sigma_{a}=(0,m^{(a)})$, $\Sigma_{b}=(-m^{(b)},0)$,
then \emph{Problem C} admits a unique solution
$(z^{a},z^{b} )\in C^{1}(\widetilde{\Omega}^{(a)})\times C^{1}(\widetilde{\Omega}^{(b)})$
satisfying
\begin{eqnarray}\label{eq:3.56}
\begin{aligned}
\big\|z^{a}-\underline{z}^{a}\big\|_{C^{1}(\widetilde{\Omega}^{(a)})}
+\big\|z^{b}-\underline{z}^{b}\big\|_{C^{1}(\widetilde{\Omega}^{(b)})}
\leq \widetilde{C}_{1}\tilde{\varepsilon}_{1}.
\end{aligned}
\end{eqnarray}
\end{theorem}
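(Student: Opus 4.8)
\textbf{Proof proposal for Theorem \ref{thm:3.2}.}

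The plan is to construct the solution of the nonlinear boundary value problem $(\widetilde{\mathbf{P}})$ by a linearization-iteration scheme around the background state, and then to prove the convergence of the iterates in the $C^1$-norm by establishing uniform $C^2$ a priori estimates. First I would set up the iteration: given an approximate solution $(z^{a,n}, z^{b,n})$ close to the background $(\underline{z}^a,\underline{z}^b)$, freeze the coefficients $\lambda^{a,n}_\pm, \lambda^{b,n}_\pm$ (which are functions of $\widetilde p(z^n;\widetilde S_0,\widetilde B_0)$ and $\widetilde w(z^n)$ via Lemma \ref{lem:3.1}) and also freeze the nonlinear coupling in the boundary conditions on $\widetilde\Gamma_{cd}$, namely the relation $\widetilde p(z^a;\cdot) = \widetilde p(z^b;\cdot)$, writing it as a linear relation between the increments using $\partial_{z_\pm}\widetilde p$ from \eqref{eq:3.48a}. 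This produces a linear boundary value problem for $(z^{a,n+1}, z^{b,n+1})$: two decoupled $2\times 2$ diagonal hyperbolic systems on the rectangles $\widetilde\Omega^{(a)}, \widetilde\Omega^{(b)}$, coupled only through the four boundary conditions on $\widetilde\Gamma_+$, $\widetilde\Gamma_-$, $\widetilde\Gamma_{cd}$ and the Cauchy data at $\xi=0$. The characteristics $z^{(i)}_-$ travel with speed $\lambda^{(i)}_+>0$ (upward) and $z^{(i)}_+$ travel with speed $\lambda^{(i)}_-<0$ (downward), so along each characteristic the corresponding invariant is constant (in the linearized problem, constant up to the frozen lower-order contributions), and the value is determined either by the inlet data or, after reflection, by one of the wall/interface conditions.

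The core analytic step is to solve this linear problem and to bound the solution in $C^2$. I would do this by the method of characteristics, tracing each characteristic backward from an arbitrary point $(\xi,\eta)$ until it meets $\{\xi=0\}$, and handling the reflections case by case: a $z_-$-characteristic in $\widetilde\Omega^{(a)}$ either hits the inlet directly, or hits $\widetilde\Gamma_+$ where the condition $z^a_-+z^a_+ = 2\arctan g'_+$ converts it into an incoming $z^a_+$-characteristic, which in turn either hits the inlet or reaches $\widetilde\Gamma_{cd}$; at $\widetilde\Gamma_{cd}$ the two conditions $z^a_-+z^a_+ = z^b_-+z^b_+$ and the linearized pressure-matching relation couple $(z^a_-, z^a_+, z^b_-, z^b_+)$ and reflect the characteristic back into $\widetilde\Omega^{(a)}$ or transmit information into $\widetilde\Omega^{(b)}$. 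Since $L$ is fixed and finite and the slopes $\lambda^{(i)}_\pm$ are close to the background values $\underline{\lambda}^{(i)}_\pm\neq 0$, each characteristic undergoes only a \emph{finite} (uniformly bounded) number of reflections before reaching the inlet, so composing the boundary reflection operators gives a closed formula for $(z^{n+1})$ in terms of the data and the frozen coefficients. The $C^0$ and $C^1$ bounds then follow by differentiating this formula; the crucial point for the $C^2$ bounds (needed because the inhomogeneous terms in the equation for the difference $z^{n+1}-z^n$ involve first derivatives of $z^n$, as explained in the introduction) is that the reflection coefficients at the walls and at $\widetilde\Gamma_{cd}$, evaluated at the background state, have modulus strictly less than $1$ in the relevant sense—or at worst equal to $1$ with the finitely-many-reflections bound saving the day—so that the geometric series of reflections does not blow up. This is exactly where the smallness of $\widetilde\varepsilon_1$ and the $C^3$-regularity of $g_\pm$ and $C^2$-regularity of the data enter: they guarantee the frozen coefficients stay in a small neighborhood of the background, keep the flow supersonic (so $\sqrt{\widetilde u^2+\widetilde v^2-\widetilde c^2}$ and all denominators stay bounded away from $0$), and make the iteration map well-defined from a small $C^2$-ball into itself.

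The main obstacle I anticipate is precisely the derivation of the \emph{closed} uniform $C^2$ estimate in the presence of repeated reflections off both the nozzle walls and the contact discontinuity: one must show that the product of reflection amplitudes over the bounded number of reflections, together with the contributions of the curvature of the walls (the $g''_\pm$, $g'''_\pm$ terms) and of the variable coefficients $\lambda^{(i)}_\pm$, stays bounded by a constant depending only on $\underline{\widetilde U}$, $L$, $\gamma$—and does not secretly grow with the number of reflections in a way that would force $L$ to be small. Handling the corner points $(0,\pm m^{(i)})$ and $(0,0)$, where the inlet meets a wall or the contact discontinuity, requires the compatibility conditions \eqref{eq:3.10} and \eqref{eq:3.17} to ensure the pieced-together solution is genuinely $C^1$ (indeed $C^2$) up to the corner. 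Once the uniform $C^2$ bound is in hand, I would show the iteration map is a contraction in the $C^1$-norm on the small $C^2$-ball: the difference $z^{n+1}-z^n$ satisfies the same type of linear problem but with inhomogeneous terms and boundary data controlled by $\|z^n-z^{n-1}\|_{C^1}$ times a factor that can be made $<1$ by shrinking $\widetilde\varepsilon'_0$. The Banach fixed point theorem then yields a unique fixed point $(z^a,z^b)\in C^1(\widetilde\Omega^{(a)})\times C^1(\widetilde\Omega^{(b)})$ satisfying \eqref{eq:3.56}, which is the desired solution of \emph{Problem C}; uniqueness in the stated class follows from the same contraction estimate applied to any two solutions.
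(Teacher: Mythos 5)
Your proposal follows essentially the same route as the paper: linearize around the background state with frozen coefficients $\tilde{\lambda}^{i,(n-1)}_{\pm}$ and linearized pressure-matching on $\widetilde{\Gamma}_{cd}$ (the paper's problems $(\mathbf{\widetilde{P}_{n}})$), establish uniform $C^{2}$ bounds by the characteristics method with a case-by-case treatment of reflections at the walls and the contact discontinuity (the paper's decomposition into $\Omega_{I}$--$\Omega_{IV}$ and slab-by-slab induction in $\xi$, with the finite length $L$ giving a uniformly bounded number of reflections), and then conclude by showing the iteration map is a contraction in $C^{1}$ on a small $C^{2}$-ball and invoking the Banach fixed point theorem, with uniqueness from the same estimate. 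This matches the paper's Sections 3.3--5, so no further comparison is needed.
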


\begin{remark}\label{rem:3.3}
By Remark \ref{rem:3.2} and Lemma \ref{lem:3.1}, Theorem \ref{thm:3.1} directly follows from Theorem \ref{thm:3.2}.
Thus, to solve \emph{Problem B}, it suffices to
show the existence and uniqueness of solutions of problem $(\widetilde{\mathbf{P}})$.  The next several
sections are devoted to the study of the non-linear boundary value problem $(\widetilde{\mathbf{P}})$.
\end{remark}

\subsection{Iteration scheme}

\vspace{5pt}
\begin{figure}[ht]
\begin{center}\label{fig3.2}
\begin{tikzpicture}[scale=1.5]
\draw [thin][->] (-5,0)to[out=30, in=-150](-4,0.1);
\draw [thin][->] (-5,-0.4)to[out=30, in=-150](-4,-0.3);
\draw [thin][->] (-5.0,-1.4)to[out=30, in=-150](-4,-1.3);
\draw [thin][->] (-5.0,-1.8)to[out=30, in=-150](-4,-1.7);
\draw [line width=0.06cm] (-3.7,0.6) --(2.5,0.6);
\draw [line width=0.06cm][red] (-3.7,-0.5) --(-0.7,-0.5);
\draw [line width=0.06cm][dashed][red] (-0.7,-0.5) --(2.5,-0.5);
\draw [line width=0.06cm] (-3.7,-2) --(2.5,-2);
\draw [thin] (-3.7,-2) --(-3.7,0.6);
\draw [line width=0.06cm](2.5,-2) --(2.5,0.6);
\draw [thin][blue](-3.7,-0.5) --(-2.6,0.6);
\draw [thin][blue](-3.7,-0.1) --(-3.0,0.6);
\draw [thin][blue](-3.7,0.3) --(-3.4,0.6);
\draw [thin][red](-3.7,0.6) --(-2.6,-0.5);
\draw [thin][red](-3.7,0.3) --(-2.9,-0.5);
\draw [thin][red](-3.7,-0.1) --(-3.3,-0.5);
\draw [thin][black](-2.6, 0.6) --(-1.5,-0.5);
\draw [thin][black](-3.0,0.6) --(-1.9,-0.5);
\draw [thin][black](-3.4,0.6) --(-2.3,-0.5);
\draw [thin][orange](-1.5,-0.5) --(-0.4,0.6);
\draw [thin][orange](-1.9,-0.5) --(-0.8,0.6);
\draw [thin][orange](-2.3,-0.5) --(-1.2,0.6);
\draw [thin][orange](-2.6,-0.5) --(-1.5,0.6);
\draw [thin][orange](-2.9,-0.5) --(-1.8,0.6);
\draw [thin][orange](-3.3,-0.5) --(-2.2,0.6);
\draw [thin][green](-2.2,0.6) --(-1.1,-0.5);
\draw [thin][green](-1.8,0.6) --(-0.7,-0.5);
\draw [thin][green](-1.5,0.6) --(-0.4,-0.5);
\draw [thin][green](-1.2,0.6)--(-0.1,-0.5);
\draw [thin][green](-0.8,0.6)--(0.3,-0.5);
\draw [thin][green](-0.4,0.6) --(0.7,-0.5);
\draw [thin][orange](-1.1,-0.5) --(0,0.6);
\draw [thin][orange](-0.7,-0.5)--(0.4,0.6);
\draw [thin][orange](-0.4,-0.5)--(0.7,0.6);
\draw [thin][orange](-0.1,-0.5)--(1.0,0.6);
\draw [thin][orange](0.3,-0.5)--(1.4,0.6);
\draw [thin][orange](0.7,-0.5)--(1.8,0.6);
\draw [thin][black](0,0.6) --(1.1,-0.5);
\draw [thin][black](0.4,0.6)--(1.5,-0.5);
\draw [thin][black](0.7,0.6)--(1.8,-0.5);
\draw [thin][black](1.0,0.6)--(2.0,-0.4);
\draw [thin][black](1.4,0.6)--(2.0,0);
\draw [thin][black](1.8,0.6)--(2.0,0.4);
\draw [thin][red](1.1,-0.5)--(2.0,0.4);
\draw [thin][red](1.5,-0.5)--(2.0,0);
\draw [thin][red](1.8,-0.5)--(2.0,-0.3);
\draw [thin][green](-3.7,-2) --(-2.3,-0.5);
\draw [thin][green](-3.7,-1.5) --(-2.7,-0.5);
\draw [thin][green] (-3.7,-1) --(-3.2,-0.5);
\draw [thin][blue](-3.7,-0.5) --(-2.2,-2);
\draw [thin][blue](-3.7,-1) --(-2.7,-2);
\draw [thin][blue] (-3.7,-1.5) --(-3.2,-2);
\draw [thin][orange] (-3.2,-2) --(-1.7,-0.5);
\draw [thin][orange] (-2.7,-2) --(-1.2,-0.5);
\draw [thin][orange] (-2.2,-2) --(-0.7,-0.5);
\draw [thin][black] (-3.2,-0.5) --(-1.7,-2);
\draw [thin][black] (-2.7,-0.5) --(-1.2,-2);
\draw [thin][black] (-2.2,-0.5) --(-0.7,-2);
\draw [thin][black] (-1.7,-0.5) --(-0.2,-2);
\draw [thin][black] (-1.2,-0.5) --(0.3,-2);
\draw [thin][black] (-0.7,-0.5) --(0.8,-2);
\draw [thin][red] (-1.7,-2) --(-0.7,-1);
\draw [thin][red] (-1.2,-2) --(-0.7,-1.5);
\draw [line width=0.03cm][dashed][red] (-0.7,-2) --(-0.7,0.6);
\draw [thin][green](-0.7,-2) --(0.8,-0.5);
\draw [thin][green] (-0.7,-1.5) --(0.3,-0.5);
\draw [thin][green](-0.7,-1) --(-0.2,-0.5);
\draw [thin][red](0.8,-0.5) --(2.0,-1.7);
\draw [thin][red] (0.3,-0.5) --(1.8,-2);
\draw [thin][red](-0.2,-0.5) --(1.3,-2);
\draw [thin][blue] (-0.2,-2)--(1.3,-0.5);
\draw [thin][blue] (0.3,-2) --(1.8,-0.5);
\draw [thin][blue] (0.8,-2) --(2.0,-0.8);
\draw [thin][blue] (1.3,-2) --(2.0,-1.3);
\draw [thin][blue] (1.8,-2) --(2.0,-1.8);
\draw [thin][red] (1.3,-0.5) --(2.0,-1.2);
\draw [thin][red](1.8,-0.5) --(2.0,-0.7);
\draw [line width=0.03cm][dashed][red] (2.0,-2) --(2.0,0.6);
\node at (-4.5, 0.4) {$\widetilde{U}^{(a)}_{0}(\eta)$};
\node at (-4.5, -1.0) {$\widetilde{U}^{(b)}_{0}(\eta)$};
\node at (2.8, 0.6) {$\widetilde{\Gamma}_{+}$};
\node at (2.8, -0.5) {$\widetilde{\Gamma}_{cd}$};
\node at (2.8, -2) {$\widetilde{\Gamma}_{-}$};
\node at (-2.0, 0.1) {$\widetilde{\Omega}^{(a)}$};
\node at (-2.0, -1) {$\widetilde{\Omega}^{(b)}$};
\node at (2.8, -2.3) {$\xi=L$};
\node at (-0.7, -2.3) {$\xi=\xi^{*}_{1}$};
\node at (1.8, -2.3) {$\xi=\xi^{*}_{2}$};
\node at (-3.8, -2.3) {$\xi=0$};
\end{tikzpicture}
\end{center}
\caption{Iteration scheme for the boundary value problem $(\widetilde{\mathbf{P}})$}\label{fig3.2}
\end{figure}
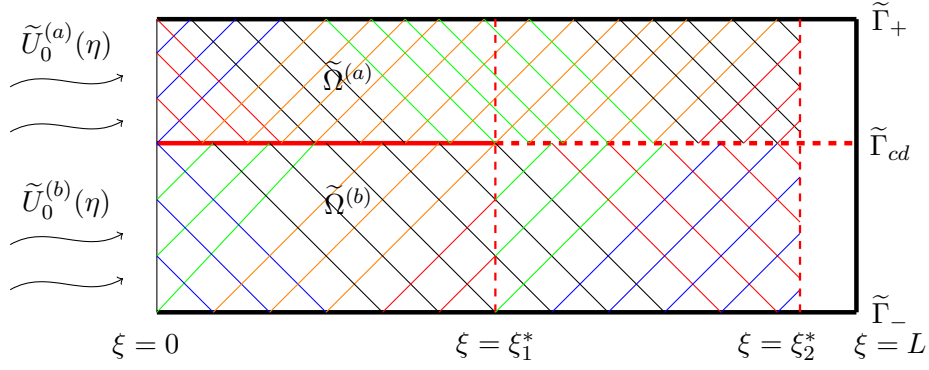
\vspace{5pt}

\par In order to solve the nonlinear boundary value problem: \emph{Problem C},  we first linearize the problem $(\widetilde{\mathbf{P}})$ to construct a sequence of approximate solutions, and then show that the approximate solutions are convergent and their limit is actually the solution of \emph{Problem C}.
The linearized hyperbolic boundary value problem for $z$ is solved in $\widetilde{\Omega}^{(a)}$ and $\widetilde{\Omega}^{(b)}$
at the same time (see Fig \ref{fig3.2}). 

First, the linearized boundary value problem of $(\widetilde{\mathbf{P}})$ is
\begin{eqnarray}\label{eq:4.1a}
(\mathbf{\widetilde{P}_{n}})\quad
\left\{
\begin{array}{llll}
\partial_{\xi}\delta z^{a,(n)}+diag( \tilde{\lambda}^{a,(n-1)}_{+}, \tilde{\lambda}^{a,(n-1)}_{-})
\partial_{\eta}\delta z^{a,(n)}=0, &\ \ \ in\ \widetilde{\Omega}^{(a)},  \\
\partial_{\xi}\delta z^{b,(n)}+diag( \tilde{\lambda}^{b,(n-1)}_{+}, \tilde{\lambda}^{b,(n-1)}_{-})
\partial_{\eta}\delta z^{b,(n)}=0, &\ \ \ in\ \widetilde{\Omega}^{(b)},  \\
\delta z^{a,(n)}=\delta z^{a}_{0}(\eta),  &\ \ \  on\ \xi=0,\\
\delta z^{b, (n)}=\delta z^{b)}_{0}(\eta),   &\ \ \  on\ \xi=0, \\
\delta z^{a,(n)}_{-}+\delta z^{a,(n)}_{+}=2\arctan g'_{+}(\xi), &\ \ \ on\ \widetilde{\Gamma}_{+},\\
\delta z^{b,(n)}_{-}+\delta z^{b,(n)}_{+}=2\arctan g'_{-}(\xi), &\ \ \ on\ \widetilde{\Gamma}_{-},\\
\delta z^{a, (n)}_{-}-\delta z^{b, (n)}_{+}=\delta z^{b, (n)}_{-}-\delta z^{a, (n)}_{+},
&\ \ \ on\ \widetilde{\Gamma}_{cd},\\
\alpha^{(n-1)}\delta z^{a,(n)}_{-}+\beta^{(n-1)}\delta z^{b,(n)}_{+}\\
\ \ \ =\alpha^{(n-1)}\delta z^{a,(n)}_{+}+\beta^{(n-1)}\delta z^{b,(n)}_{-}+c(\xi),
&\ \ \ on\ \widetilde{\Gamma}_{cd},
\end{array}
\right.
\end{eqnarray}
where
\begin{eqnarray}\label{eq:3.47}
\begin{aligned}
&\delta z^{i,(n)}:=z^{(n)}-\underline{z}^{i},
\ \ \ \delta z^{i}_{0}(\eta):=z^{i}_{0}(\eta)-\underline{z}^{i},\  \ \
\tilde{\lambda}^{i,(n-1)}_{\pm}:=\lambda^{i}_{\pm}(z^{i,(n-1)}_{-},z^{i,(n-1)}_{+}),
\end{aligned}
\end{eqnarray}
for $i=a, b$,
\begin{eqnarray}\label{eq:3.48}
\begin{aligned}
&\alpha^{(n-1)}:=\alpha(z^{a,(n-1)};\widetilde{S}^{a}_{0},\widetilde{B}^{a}_{0})
=\frac{1}{2\int^{1}_{0}\partial_{\tau}\Theta\big(\underline{p}^{a}+\tau(p^{a, (n-1)}-\underline{p});\widetilde{S}^{a}_{0},\widetilde{B}^{a}_{0}\big)d\tau},\\
&\beta^{(n-1)}:=\beta(z^{b,(n-1)};\widetilde{S}^{b}_{0},\widetilde{B}^{b}_{0})
=\frac{1}{2\int^{1}_{0}\partial_{\tau}\Theta\big(\underline{p}^{b}+\tau(p^{b, (n-1)}-\underline{p}^{b});\widetilde{S}^{b}_{0},\widetilde{B}^{b}_{0}\big)d\tau},
\end{aligned}
\end{eqnarray}
and
\begin{eqnarray}\label{eq:3.49}
\begin{aligned}
c(\xi)= p(\underline{z}^{a}; \underline{S}^{a},\underline{B}^{a})
-p(\underline{z}^{a}; \widetilde{S}^{a}_{0},\widetilde{B}^{a}_{0})
+p(\underline{z}^{b}; \widetilde{S}^{b}_{0},\widetilde{B}^{b}_{0})
-p(\underline{z}^{b}; \underline{S}^{b},\underline{B}^{b}).
\end{aligned}
\end{eqnarray}

\par 
Define the iteration set as
\begin{eqnarray*}
\mathcal{M}_{\epsilon}=\big\{(z^{a,(n)}, z^{b,(n)}):\ \| z^{a,(n)}-\underline{z}^{a}\|_{C^{2}(\widetilde{\Omega}^{(a)})}
+\| z^{b,(n)}-\underline{z}^{b}\|_{C^{2}(\widetilde{\Omega}^{(b)})}\leq \epsilon \big\},
\end{eqnarray*}
for some $0<\epsilon<1$.
\par Next, let us introduce the map
\begin{eqnarray}\label{eq:3.51}
\mathcal{T}:\  \mathcal{M}_{2\sigma}\longrightarrow \mathcal{M}_{2\sigma}.
\end{eqnarray}
For given functions $(z^{a,(n-1)}, z^{b,(n-1)})\in \mathcal{M}_{2\sigma}$,
we solve the linearized  boundary value problem $(\mathbf{\widetilde{P}_{n}})$ to obtain functions $(z^{a,(n)}, z^{b,(n)})\in \mathcal{M}_{2\sigma}$.
Then the map $\mathcal{T}$ is defined such that
\begin{eqnarray}\label{eq:3.52}
(z^{a,(n)}, z^{b,(n)}):=\mathcal{T}(z^{a,(n-1)}, z^{b,(n-1)}).
\end{eqnarray}
For the map $\mathcal{T}$, we shall show the following two facts: 

\medskip
\par (1)\ $\mathcal{T}$ is well-defined, \emph{i.e.}, $\mathcal{T}$ exists and maps from $\mathcal{M}_{2\sigma}$ to itself;
\par (2)\ $\mathcal{T}$ is a contraction map. 

\medskip
\noindent
Once the above facts are proved, the convergence follows from the Banach fixed point theorem and one can show that the limit  is  the solution
of the boundary value problem $(\widetilde{\mathbf{P}})$. The first fact will be proved in Theorem \ref{thm:4.1} and the second fact will be proved in Proposition \ref{prop:5.1}.
\bigskip

\section{Estimates for the Solutions to the Problem $(\mathbf{\widetilde{P}_{n}})$
in  $\widetilde{\Omega}^{(a)}\cup\widetilde{\Omega}^{(b)}$}\setcounter{equation}{0}

In this section, we shall consider the solutions 
of Problem $(\mathbf{\widetilde{P}_{n}})$ near the background state $\underline{z}$ and prove the following theorem.

\begin{theorem}\label{thm:4.1}
There exist positive constants $C^{*}$ and $\sigma^{*}_{0}$  depending only on $ \widetilde{\underline{U}}$, $L$ and $\gamma$
such that, for any $\sigma\in (0,\sigma^{*}_{0})$,  if $(\delta z^{a,(n-1)},\delta z^{b,(n-1)})\in \mathcal{M}_{2\sigma}$,
then the solutions $(\delta z^{a,(n)},\delta z^{b,(n)})$ to the problem $(\mathbf{\widetilde{P}_{n}})$ satisfy
\begin{eqnarray}\label{eq:4.71}
\begin{aligned}
&\|\delta z^{a,(n)}\|_{C^{2}(\Omega^{(a)})}+\|\delta z^{b,(n)}\|_{C^{2}(\Omega^{(b)})}\\
&\ \ \ \leq C^{*}\Big(\sum_{i=a,b}\big(\|z^{i}_{0}-\underline{z}^{i}\|_{C^{2}(\Sigma_{i})}
+\|\widetilde{S}^{i}_{0}-\underline{S}^{i}\|_{C^{2}( \Sigma_{i})}\big)\\
&\ \ \ \ \ \  +\sum_{i=a,b}\|\widetilde{B}^{i}_{0}-\underline{B}^{i}\|_{C^{2}( \Sigma_{i})}
+\|g_{+}-1\|_{C^{3}([0, L])}+\|g_{-}+1\|_{C^{3}([0, L])}\Big).
\end{aligned}
\end{eqnarray}
where $\Sigma_{a}=(0, m^{(a)}),\ \Sigma_{b}=(-m^{(b)}, 0)$.
\end{theorem}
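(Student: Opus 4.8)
The plan is to prove \eqref{eq:4.71} as an \emph{a priori} estimate for the \emph{linear} problem $(\mathbf{\widetilde{P}_n})$, carried out by a careful method of characteristics that exploits the diagonal form. First I would record the background structure: since $(\delta z^{a,(n-1)},\delta z^{b,(n-1)})\in\mathcal{M}_{2\sigma}$ with $\sigma$ small, the coefficients $\tilde\lambda^{i,(n-1)}_\pm=\lambda^i_\pm\big(z^{i,(n-1)};\widetilde S^i_0,\widetilde B^i_0\big)$ are $C^2$ functions that are $O(\sigma)$-close to the constants $\underline\lambda^i_\pm$, and because the background flow is strictly supersonic one has $\tilde\lambda^{i,(n-1)}_+\ge\delta_1>0>-\delta_1\ge\tilde\lambda^{i,(n-1)}_-$ uniformly. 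Hence through every point of $\widetilde\Omega^{(a)}$ (resp.\ $\widetilde\Omega^{(b)}$) there pass two characteristic curves, one with strictly positive and one with strictly negative slope, each transversal to $\{\xi=0\}$, $\{\xi=L\}$ and to the horizontal boundaries $\widetilde\Gamma_\pm,\widetilde\Gamma_{cd}$. I would then solve the boundary relations of $(\mathbf{\widetilde{P}_n})$ for the \emph{outgoing} Riemann invariants at each horizontal boundary: on $\widetilde\Gamma_+$ and $\widetilde\Gamma_-$ the wall relation expresses the outgoing invariant in terms of the incoming one and of $\arctan g'_\pm$; on $\widetilde\Gamma_{cd}$ the two conditions form a $2\times2$ linear system for the two outgoing invariants (one into $\widetilde\Omega^{(a)}$, one into $\widetilde\Omega^{(b)}$) in terms of the two incoming ones and of $c(\xi)$, with coefficient matrix $\bigl(\begin{smallmatrix}1 & -1\\ \alpha^{(n-1)} & \beta^{(n-1)}\end{smallmatrix}\bigr)$ whose determinant is $\alpha^{(n-1)}+\beta^{(n-1)}>0$ by \eqref{eq:3.48b}; this matrix and its inverse are uniformly bounded. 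This yields well-defined ``reflection laws'' at all three horizontal boundaries.

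Next comes the $C^0$ estimate. I would fix a point $P$ and trace the two characteristics through it backward in $\xi$; a positively sloped characteristic moves toward $\widetilde\Gamma_{cd}$ and a negatively sloped one toward $\widetilde\Gamma_+$ in $\widetilde\Omega^{(a)}$ (symmetrically in $\widetilde\Omega^{(b)}$), so each full crossing of a slab costs at least $c_0:=\min(m^{(a)},m^{(b)})/\sup_{i,n}|\tilde\lambda^{i,(n-1)}_\pm|$ in $\xi$, a quantity bounded below uniformly. Consequently any two consecutive boundary reflections of the backward characteristic have $\xi$-coordinates differing by at least $c_0$, so it reaches $\{\xi=0\}$ after at most $N:=\lceil L/c_0\rceil+1$ reflections, with $N$ independent of $n$ and of $P$. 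Because every reflection at $\widetilde\Gamma_{cd}$ couples the traced value to \emph{two} invariants (one on each side), this generates a finite binary ``characteristic tree'' of depth $\le N$; its leaves lie on $\{\xi=0\}$ and carry the inlet data $\delta z^{a,b}_0$, while each internal node contributes a source bounded by $\|\arctan g'_\pm\|_{C^0}$ (wall nodes) or $|c|$ ($\widetilde\Gamma_{cd}$ nodes). Summing over the at most $2^N$ branches, and using $\|\arctan g'_\pm\|_{C^0}\lesssim\|g_+-1\|_{C^1}+\|g_-+1\|_{C^1}$ and $|c|\lesssim\sum_i\big(\|\widetilde S^i_0-\underline S^i\|_{C^0}+\|\widetilde B^i_0-\underline B^i\|_{C^0}\big)$ (Taylor expanding $p(\underline z^i;\cdot,\cdot)$ around $(\underline S^i,\underline B^i)$ in \eqref{eq:3.49}), one obtains the $C^0$ part of \eqref{eq:4.71}. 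The very same construction also \emph{produces} $(\delta z^{a,(n)},\delta z^{b,(n)})$, giving existence and uniqueness for $(\mathbf{\widetilde{P}_n})$.

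For the $C^1$ and then $C^2$ estimates I would differentiate the equations and the boundary conditions and repeat the same characteristic-tree argument. Differentiating the scalar transport equation for a Riemann invariant in $\eta$ shows that $\partial_\eta\delta z^{(n)}$ (hence, via the equation, $\partial_\xi\delta z^{(n)}$) satisfies the \emph{same} diagonal system with an inhomogeneous term $-(\partial_\eta\tilde\lambda^{(n-1)})\,\partial_\eta\delta z^{(n)}$ whose coefficient is bounded by $\|z^{(n-1)}\|_{C^1}+\|\widetilde S_0-\underline S\|_{C^1}+\|\widetilde B_0-\underline B\|_{C^1}\le C(\underline{\widetilde U})$; along characteristics this contributes only a Gronwall factor $e^{CL}$. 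Differentiating the wall conditions along $\widetilde\Gamma_\pm$ and using the PDE to trade $\partial_\xi$ for $\partial_\eta$ produces reflection laws for $\partial_\eta\delta z^{(n)}$ with the same (invertible, since $\tilde\lambda^{(n-1)}_-\ne0$) structure and with sources now carrying $g''_\pm$; differentiating the two $\widetilde\Gamma_{cd}$ conditions likewise, the extra terms generated by $\partial_\xi\alpha^{(n-1)},\partial_\xi\beta^{(n-1)}$ are lower order and absorbed by the already-established $C^0$ bound. This gives the $C^1$ estimate with $\|g_\pm\|_{C^2}$ on the right-hand side. Iterating once more (differentiating twice) yields the $C^2$ estimate: the sources now involve $\partial^2_\eta\tilde\lambda^{(n-1)}$, which is controlled by $\|z^{(n-1)}\|_{C^2}+\|\widetilde S_0-\underline S\|_{C^2}+\|\widetilde B_0-\underline B\|_{C^2}$ --- this is exactly why these norms enter \eqref{eq:4.71} in $C^2$ (and why $\mathcal{M}_\epsilon$ is defined with the $C^2$ norm, so that the iteration closes) --- while the wall sources carry $g'''_\pm$, i.e.\ $\|g_\pm\|_{C^3}$, and the $\widetilde\Gamma_{cd}$-reflection produces only terms already bounded in $C^1$. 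The derivatives $c',c''$ and their contributions are all $\lesssim\sum_i\big(\|\widetilde S^i_0-\underline S^i\|_{C^2}+\|\widetilde B^i_0-\underline B^i\|_{C^2}\big)$. Collecting the $C^0$, $C^1$, $C^2$ estimates gives \eqref{eq:4.71}.

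The main obstacle is precisely this repeated reflection on a \emph{cornered} domain. One must show the reflection count $N$ is finite and $n$-independent, which hinges on the strict supersonicity of the background (eigenvalues $\tilde\lambda^{(n-1)}_\pm$ uniformly away from $0$, giving a uniform lower bound $c_0$ on the $\xi$-length of each crossing) and on the fixed length $L$; this is the point where the argument would fail for an arbitrarily long nozzle. One must also handle the corners where $\widetilde\Gamma_{cd}$ meets $\{\xi=0\}$ and $\{\xi=L\}$, and where $\widetilde\Gamma_\pm$ meet $\{\xi=0\}$: the characteristic tree through a point near the inlet terminates after only a few reflections (such points reach $\{\xi=0\}$ quickly), so the counting is unaffected, but genuine $C^2$ regularity of $\delta z^{(n)}$ up to the corners requires compatibility of the inlet data with the wall relations and with the Rankine--Hugoniot relations \eqref{eq:3.10}, \eqref{eq:3.17} to second order, which I would take as part of the admissible data class (the zeroth-order version being \eqref{eq:3.10}). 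Secondary technical points are the uniform invertibility of the $\widetilde\Gamma_{cd}$ reflection matrices and of their once- and twice-differentiated analogues, and the bookkeeping ensuring every constant depends only on $\underline{\widetilde U}$, $L$, $\gamma$, so that together with \eqref{eq:3.53} and a suitable smallness of $\tilde\varepsilon_1$ the map $\mathcal{T}$ indeed sends $\mathcal{M}_{2\sigma}$ into itself.
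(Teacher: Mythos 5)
Your proposal is correct and follows essentially the same route as the paper: it too solves the wall and contact-discontinuity conditions for the outgoing Riemann invariants (the coefficients $\gamma^{(n-1)}_1,\dots,\gamma^{(n-1)}_4$ in \eqref{eq:4.61}--\eqref{eq:4.62} are exactly the inverse of your $2\times2$ reflection matrix with determinant $\alpha^{(n-1)}+\beta^{(n-1)}$), integrates along characteristics with Gronwall at the $C^0$, $C^1$ and $C^2$ levels (picking up $g''_\pm$, $g'''_\pm$, $c$ and derivatives of $\alpha^{(n-1)},\beta^{(n-1)}$ from the differentiated boundary conditions), and controls the finitely many reflections through the uniform bounds on $\tilde\lambda^{i,(n-1)}_\pm$ and the finite length $L$. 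The only difference is bookkeeping: where you trace a backward characteristic tree of depth at most $\lceil L/c_0\rceil+1$, the paper decomposes the domain into the regions $\Omega_I$--$\Omega_{IV}$ (Propositions \ref{prop:4.1}--\ref{prop:4.3}) and then iterates the estimates slab by slab over $\xi$-intervals of length $\xi^*_1$, repeating $[\frac{L}{\xi^*_1}]+1$ times.
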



We shall divide the proof of the Theorem \ref{thm:4.1} into several parts for different subdomains.

 \vspace{2pt}
\begin{figure}[ht]
\begin{center}
\begin{tikzpicture}[scale=1.5]
\draw [line width=0.06cm] (-5,1) --(2,1);
\draw [line width=0.06cm] (-5,-2) --(2,-2);
\draw [thick](-5,-2) --(-5,1);
\draw [thick][red](-5,-0.5) --(-2.5,1);
\draw [thick][red](-5,-2) --(0,1);
\draw [thick][red](-2.5,1)--(1.5,-1.5) ;
\draw [thick][blue](-5,-0.5) --(-2.5,-2);
\draw [thick][blue](-5,1) --(0,-2);
\draw [thick][blue](-2.5,-2)--(1.5,0.5) ;
\node at (-5.5, -0.5) {$(0,0)$};
\node at (2.6, 1) {$\eta=m^{(a)}$};
\node at (2.6, -2) {$\eta=-m^{(b)}$};
\node at (-5, -2.4) {$\xi=0$};
\node at (-4.0, -0.2) {$l^{0}_{+}$};
\node at (-4.0, -0.8) {$l^{0}_{-}$};
\node at (-3.0, -1.1) {$l^{1}_{+}$};
\node at (-3.0, 0.1) {$l^{1}_{-}$};
\node at (-4.5, 0.2) {$\Omega_{ I}$};
\node at (-3.7, 0.6) {$\Omega_{ II}$};
\node at (-2.5, 0.3) {$\Omega_{ IV}$};
\node at (-3.4,-0.6) {$\Omega_{III}$};
\node at (-4.5, -1.3) {$\Omega_{ I}$};
\node at (-3.7, -1.7) {$\Omega_{ II}$};
\node at (-2.5, -1.3) {$\Omega_{ IV}$};
\end{tikzpicture}
\caption{ }\label{fig:4.1}
\end{center}
\end{figure}

As shown in Fig. \ref{fig:4.1}, the subdomains are determined as follows. Let $\Omega_I$
be the union of two triangles which are bounded by the inlet $\xi=0$, the two characteristics
of \eqref{eq:4.1a} corresponding to $\lambda_+$ starting from point $(0,-m^{(b)})$
(denoted by $l^1_+$) and from point $(0,0)$ (denoted by $l^0_+$),
and the other two characteristics of \eqref{eq:4.1a} corresponding to $\lambda_-$
starting from point $(0,m^{(a)})$ (denoted by $l_-^1$) and from point $(0,0)$
(denoted by $l_-^0$).
Let $\Omega_{II}$ be the union of two triangles bounded by the nozzle walls $\eta=m^{(a)}$
(or $\eta=-m^{(b)}$),$l_+^0$ (or $l^0_-$), and $l_-^1$ (or $l_+^1$).
Let $\Omega_{III}$ be the diamond bounded by $l^0_{\pm}$ and $l^1_{\pm}$.
Let $\Omega_{IV}$ be the union of two diamonds bounded by $l^0_{\pm}$, $l^1_{\pm}$,
the characteristic corresponding to $\lambda_{+}$ starting from the intersection point
of $l_-^0$ and $\eta=-m^{(b)}$, and the characteristic corresponding to $\lambda_-$
starting from the intersection point of $l_+^0$ and $\eta=m^{(a)}$.

\subsection{Estimates of solutions of the boundary value problem  $(\mathbf{\widetilde{P}_{n}})$
in $\Omega_I$}
In this subsection, we study the problem $(\mathbf{\widetilde{P}_{n}})$
in $\Omega_I$ by the characteristic method.
Notice that $\Omega_I$ is bounded by the inlet and the characteristics issuing from the inlet,
so if we regard $\xi$ as the time, then the problem $(\mathbf{\widetilde{P}_{n}})$
in $\Omega_I$ can be regarded as the following initial value problem $(\mathbf{\widetilde{P}_{n}})_1$:
\begin{eqnarray}\label{eq:4.1}
\begin{aligned}
(\mathbf{\widetilde{P}_{n}})_1\quad \left\{
\begin{array}{llll}
\partial_{\xi}\delta z^{a,(n)}+diag(\tilde{\lambda}^{a,(n-1)}_{+},
\tilde{\lambda}^{a,(n-1)}_{-})\partial_{\eta}\delta z^{a,(n)}=0,
&\ \  \ in\quad \widetilde{\Omega}^{(a)}, \\
\partial_{\xi}\delta z^{b,(n)}+diag(\tilde{\lambda}^{b,(n-1)}_{+},
\tilde{\lambda}^{b,(n-1)}_{-})\partial_{\eta}\delta z^{b,(n)}=0,
&\ \  \ in\quad \widetilde{\Omega}^{(b)}, \\
\delta z^{a,(n)}=\delta z^{(a)}_{0}, &\ \ \ on\quad  \xi=0,\\
\delta z^{b,(n)}=\delta z^{(b)}_{0}, &\ \ \ on\quad  \xi=0.
\end{array}
\right.
\end{aligned}
\end{eqnarray}

\par 
For $\eta_{0}\in [-m^{(b)}, m^{(a)}]$, let $\eta=\chi^{i,(n)}_{\pm}(\xi,\eta_{0})$
be the characteristic curves corresponding to $\tilde{\lambda}^{i,(n-1)}_{\pm}$,
issuing from the point $(0,\eta_{0})$, \emph{i.e.}, for $i=a$ or $b$,
\begin{eqnarray*}
\begin{aligned}
\left\{
\begin{array}{llll}
\frac{d\chi^{i,(n)}_{\pm}}{d\xi}=\tilde{\lambda}^{i,(n-1)}_{\pm}(\xi,\chi^{i,(n)}_{\pm}),\\
\chi^{i,(n)}_{\pm}(0,\eta_{0})=\eta_{0}.
\end{array}
\right.
\end{aligned}
\end{eqnarray*}
Then,  along the characteristics one has
\begin{eqnarray}\label{eq:4.3}
\eta=\eta_{0}+\int^{\xi}_{0}\tilde{\lambda}^{i,(n-1)}_{\pm}(\tau,\chi^{i,(n)}_{\pm}(\tau, \eta_{0}))d\tau.
\end{eqnarray}
By \eqref{eq:4.3}, for any given point $(\xi,\eta)$ in $\Omega_I$, there exist unique $\eta_0^{\pm}$ such that the characteristics corresponding to $\lambda_{\pm}$ issuing from $(0,\eta_0^{\pm})$ respectively pass through $(\xi,\eta)$. Thus we can regard $\eta_0^{\pm}$ as a function of $(\xi,\eta)$ in $\Omega_I$. For simplicity of notation, we write $\eta_0^{\pm}$ as $\eta_0$.

We have the following estimates for $\eta_{0}$.
\begin{lemma}\label{lem:4.1}
For any $\delta z^{i,(n-1)}\in \mathcal{M}_{2\sigma}$ $(i=a, b)$, there exist constants $C_{0,1}$ and $C_{0,2}$
 depending only on $ \widetilde{\underline{U}}$, $L$ and $\sigma$ such that
\begin{eqnarray}\label{eq:4.4}
\begin{aligned}
\|D^{k}\eta_{0}\|_{C^{0}(\tilde{\Omega}_a\cup\tilde{\Omega}_b)}\leq C_{0,k},\ \ \  k=1,2.
\end{aligned}
\end{eqnarray}
\end{lemma}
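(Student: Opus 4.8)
The quantity $\eta_0^{\pm}$ is defined implicitly through the characteristic identity \eqref{eq:4.3}: for a fixed point $(\xi,\eta)\in\Omega_I$ it is the unique value satisfying
\begin{eqnarray*}
\eta=\eta_0+\int_0^\xi \tilde\lambda^{i,(n-1)}_\pm\big(\tau,\chi^{i,(n)}_\pm(\tau,\eta_0)\big)\,d\tau.
\end{eqnarray*}
The plan is to differentiate this relation with respect to $\xi$ and $\eta$ and estimate the resulting expressions using the smallness of $\delta z^{i,(n-1)}$ in $C^2$. First I would record the basic fact that, since $(\delta z^{a,(n-1)},\delta z^{b,(n-1)})\in\mathcal M_{2\sigma}$ and $\lambda^{i}_\pm$ is a smooth function of $z$ (away from the sonic set, which is excluded for the supersonic background and small perturbations), the coefficients $\tilde\lambda^{i,(n-1)}_\pm$ are bounded in $C^2$ by a constant depending only on $\widetilde{\underline U}$, $\gamma$ and $\sigma$; in particular $\tilde\lambda^{i,(n-1)}_\pm$ stays uniformly close to the constant background eigenvalue $\underline\lambda^{i}_\pm$.

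\textbf{First order estimate.} Differentiating the characteristic ODE $\tfrac{d}{d\xi}\chi^{i,(n)}_\pm=\tilde\lambda^{i,(n-1)}_\pm(\xi,\chi^{i,(n)}_\pm)$ in the parameter $\eta_0$ gives a linear ODE for $\partial_{\eta_0}\chi^{i,(n)}_\pm$ with coefficient $\partial_\eta\tilde\lambda^{i,(n-1)}_\pm$, which by Gronwall yields
\begin{eqnarray*}
\big|\partial_{\eta_0}\chi^{i,(n)}_\pm(\xi,\eta_0)-1\big|\le C\sigma,\qquad \xi\in[0,L],
\end{eqnarray*}
so $\partial_{\eta_0}\chi^{i,(n)}_\pm$ is bounded above and below by positive constants; in fact $\partial_{\eta_0}\chi^{i,(n)}_\pm\ge 1-C\sigma>\tfrac12$ for $\sigma$ small. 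Since $\eta=\chi^{i,(n)}_\pm(\xi,\eta_0(\xi,\eta))$ identically on $\Omega_I$, the implicit function theorem gives $\partial_\eta\eta_0=1/\partial_{\eta_0}\chi^{i,(n)}_\pm$ and $\partial_\xi\eta_0=-\tilde\lambda^{i,(n-1)}_\pm/\partial_{\eta_0}\chi^{i,(n)}_\pm$, both of which are then bounded by a constant $C_{0,1}$ depending only on $\widetilde{\underline U}$, $L$, $\sigma$. This proves \eqref{eq:4.4} for $k=1$.

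\textbf{Second order estimate.} Differentiating the ODE for $\partial_{\eta_0}\chi^{i,(n)}_\pm$ once more in $\eta_0$ produces a linear ODE for $\partial^2_{\eta_0}\chi^{i,(n)}_\pm$ whose inhomogeneous term involves $\partial^2_\eta\tilde\lambda^{i,(n-1)}_\pm$ and $(\partial_{\eta_0}\chi^{i,(n)}_\pm)^2$; both are already controlled, so Gronwall again gives a uniform bound $|\partial^2_{\eta_0}\chi^{i,(n)}_\pm|\le C$ on $[0,L]$. Then I would differentiate the two first-order formulas for $\partial_\eta\eta_0$ and $\partial_\xi\eta_0$ once more, using the chain rule and the already-established bounds on $\partial_\eta\eta_0$, $\partial_\xi\eta_0$, on $\partial_{\eta_0}\chi^{i,(n)}_\pm$ (bounded away from $0$), on $\partial^2_{\eta_0}\chi^{i,(n)}_\pm$, and on the first and second $\eta$-derivatives of $\tilde\lambda^{i,(n-1)}_\pm$. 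Collecting terms yields the bound $\|D^2\eta_0\|_{C^0}\le C_{0,2}$, completing \eqref{eq:4.4} for $k=2$.

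\textbf{Main obstacle.} The routine part is the Gronwall bookkeeping; the only genuine point requiring care is keeping the Jacobian factor $\partial_{\eta_0}\chi^{i,(n)}_\pm$ bounded \emph{away from zero} uniformly on all of $[0,L]$ — this is what makes the implicit function theorem applicable globally on $\Omega_I$ and what prevents the derivative bounds from blowing up, and it is precisely here that the smallness threshold $\sigma^*_0$ (hence an $L$-dependent constraint, consistent with the expected blow-up for long nozzles) enters. I would also need to note that $\Omega_I$ only involves characteristics issuing directly from the inlet, so no reflection off $\widetilde\Gamma_\pm$ or $\widetilde\Gamma_{cd}$ occurs here and the above self-contained ODE analysis suffices; the reflected cases are handled in the later subdomains $\Omega_{II}$–$\Omega_{IV}$.
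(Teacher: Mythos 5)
Your proposal is correct and follows essentially the same route as the paper: the paper differentiates the characteristic relation \eqref{eq:4.3} and writes closed-form expressions whose denominator $1+\int_0^\xi\partial_\eta\tilde\lambda^{i,(n-1)}_\pm e^{\int_0^\tau\partial_\eta\tilde\lambda^{i,(n-1)}_\pm ds}\,d\tau$ is exactly the Jacobian $\partial_{\eta_0}\chi^{i,(n)}_\pm$ that you bound away from zero via the variational ODE and Gronwall, with the same use of the $C^2$-smallness of $\delta z^{i,(n-1)}$. The two presentations are equivalent in substance, including the key point of keeping the Jacobian uniformly nondegenerate on $[0,L]$.
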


\begin{proof}
Without loss of the generality, we only consider the estimates of \eqref{eq:4.4} in $\Omega_I$, 
since otherwise we should consider the reflection of the characteristics by the nozzle walls which can be uniformly bounded by a constant depending only on the background solution. The discussion of the reflection of the characteristics by the nozzle walls is postponed to Lemma \ref{lem:4.2}.

Taking derivatives on \eqref{eq:4.3} with respect to $\xi, \eta$ and by the direct computation we can obtain the formulas of the first and second order partial derivatives of $\eta_0$ with respect to $\xi, \eta$, for example,
$$\frac{\partial \eta_{0}}{\partial\eta}=\frac{1}
{1+\int^{\xi}_{0}\partial_{\eta}\tilde{\lambda}^{i,(n-1)}_{\pm}
e^{\int^{\tau}_{0}\partial_{\eta}\tilde{\lambda}^{i,(n-1)}_{\pm}ds}d\tau},\ \ \  i=a, b,
$$
and
$$\frac{\partial^{2}\eta_{0}}{\partial\eta^{2}}
=-\frac{\int^{\xi}_{0}\partial_{\eta}\Big(\partial_{\eta}\tilde{\lambda}^{i,(n-1)}_{\pm}
e^{\int^{\tau}_{0}\partial_{\eta}\tilde{\lambda}^{i,(n-1)}_{\pm}ds}\Big)
e^{\int^{\tau}_{0}\partial_{\eta}\tilde{\lambda}^{i,(n-1)}_{\pm}ds} d\tau \frac{\partial \eta_{0}}{\partial \eta}}
{\Big(1+\int^{\xi}_{0}\partial_{\eta}\tilde{\lambda}^{i,(n-1)}_{\pm}
e^{\int^{\tau}_{0}\partial_{\eta}\tilde{\lambda}^{i,(n-1)}_{\pm}ds}d\tau\Big)^{2}}.
$$
Other derivatives have the similar formulas.
From these formulas we can deduce the estimates for $\|D^{2}\eta_{0}\|_{C^{0}(\Omega_I)}$.
This complete the proof.
\end{proof}


Let $\xi_1^{a,+}$ be the intersection point of the characteristics corresponding to $\lambda_{+}$ starting from $(0,0)$ and the upper nozzle wall $\eta=m^{(a)}$. Let $\xi_1^{b,-}$ be the intersection point of the characteristics corresponding to $\lambda_-$ starting from $(0,0)$ and the lower nozzle wall $\eta=-m^{(b)}$,  
\emph{i.e.}, it holds that
\begin{eqnarray}\label{eq:4.7}
\begin{aligned}
&\int^{\xi^{a,+}_{1}}_{0}\tilde{\lambda}^{a,(n-1)}_{+}(\tau,\chi^{a,(n)}_{+,1}(\tau,0))d\tau=m^{(a)},\\
&\int^{\xi^{b,-}_{1}}_{0}\tilde{\lambda}^{b,(n-1)}_{-}(\tau,\chi^{b,(n)}_{-,1}(\tau, 0))d\tau=-m^{(b)}.
\end{aligned}
\end{eqnarray}

Then we have the following proposition.
\begin{proposition}\label{prop:4.1}
There exist positive constants $\widetilde{C}^{i}_{1}$ 
and $\sigma_{0}$
depending only on $\underline{\widetilde{U}}$ and $L,\gamma$ such that,  for any $\sigma\in (0,\sigma_{0})$,
if $\delta z^{i,(n-1)}\in \mathcal{M}_{2\sigma}$, the solution $\delta z^{i,(n)}$ to
the initial value problem $(\mathbf{\widetilde{P}_{n}})_{1}$ satisfies
\begin{eqnarray}\label{eq:4.9}
\begin{aligned}
\|\delta z^{a,(n)}_{-}\|_{C^{2}(\widetilde{\Omega}^{(a)}\cap(\Omega_I\cup\Omega_{II}))}&+\|\delta z^{b,(n)}_{-}\|_{C^{2}(\widetilde{\Omega}^{(b)}\cap(\Omega_I\cup\Omega_{III}))}\\
&\ \ \  \leq\widetilde{C}^{i}_{1}\Big(\|\delta z^{(a)}_{-,0}\|_{C^{2}([0,m^{(a)}])}
+\|\delta z^{(b)}_{-,0}\|_{C^{2}([-m^{(b)},0])}\Big),
\end{aligned}
\end{eqnarray}
and
\begin{eqnarray}\label{eq:4.10}
\begin{aligned}
\|\delta z^{a,(n)}_{+}\|_{C^{2}(\widetilde{\Omega}^{(a)}\cap(\Omega_I\cup\Omega_{III}))}&+\|\delta z^{b,(n)}_{+}\|_{C^{2}(\widetilde{\Omega}^{(b)}\cap(\Omega_I\cup\Omega_{II}))}\\
& \ \ \ \leq\widetilde{C}^{i}_{1}\Big(\|\delta z^{(a)}_{+,0}\|_{C^{2}([0,m^{(a)}])}+\|\delta z^{(b)}_{+,0}\|_{C^{2}([-m^{(b)},0])}\Big).
\end{aligned}
\end{eqnarray}
\end{proposition}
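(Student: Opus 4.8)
\textbf{Proof proposal for Proposition \ref{prop:4.1}.}

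The plan is to exploit the fact that in $\Omega_I$ the problem $(\mathbf{\widetilde{P}_{n}})_1$ is a pure initial value problem (no boundary reflection enters), so the two components $\delta z^{i,(n)}_{\pm}$ are simply transported, each along its own family of characteristics, from the inlet data. First I would write the explicit representation formula: for a point $(\xi,\eta)\in\widetilde{\Omega}^{(i)}\cap\Omega_I$, tracing back along the $\lambda^i_{\pm}$-characteristic to the inlet gives
\begin{eqnarray*}
\delta z^{i,(n)}_{\pm}(\xi,\eta)=\delta z^{(i)}_{\pm,0}\big(\eta_0^{\pm}(\xi,\eta)\big),
\end{eqnarray*}
where $\eta_0^{\pm}$ is the foot of the characteristic, as in \eqref{eq:4.3}. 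The $C^0$ bound is then immediate. For the $C^2$ bound I would differentiate this composition once and twice, using the chain rule; the derivatives of $\delta z^{i,(n)}_{\pm}$ are expressed through $D^k\eta_0$ ($k=1,2$) and $D^k\delta z^{(i)}_{\pm,0}$, and Lemma \ref{lem:4.1} provides uniform bounds $\|D^k\eta_0\|_{C^0}\le C_{0,k}$ depending only on $\widetilde{\underline U}$, $L$, $\gamma$ and $\sigma$. This yields \eqref{eq:4.9}–\eqref{eq:4.10} restricted to the $\Omega_I$ part with a constant $\widetilde C^i_1$ of the asserted dependence.

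Next I would extend the estimate from $\Omega_I$ to $\Omega_{II}$ and $\Omega_{III}$. The key structural observation is which characteristic family ``survives'' into which region without hitting the contact discontinuity: the $\lambda_-$-characteristics in $\widetilde{\Omega}^{(a)}$ and the $\lambda_+$-characteristics in $\widetilde{\Omega}^{(b)}$ run from the inlet toward the nozzle walls $\eta=m^{(a)}$, $\eta=-m^{(b)}$ and cover $\Omega_I\cup\Omega_{II}$; these carry $\delta z^{a,(n)}_-$ and $\delta z^{b,(n)}_-$ respectively, which is why the left-hand side of \eqref{eq:4.9} pairs $\widetilde{\Omega}^{(a)}\cap(\Omega_I\cup\Omega_{II})$ with $\widetilde{\Omega}^{(b)}\cap(\Omega_I\cup\Omega_{III})$. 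Symmetrically, the $\lambda_+$-characteristics in $\widetilde{\Omega}^{(a)}$ and the $\lambda_-$-characteristics in $\widetilde{\Omega}^{(b)}$ cover $\Omega_I\cup\Omega_{III}$ and $\Omega_I\cup\Omega_{II}$ and carry the quantities in \eqref{eq:4.10}. In each of these regions the relevant component is still given by the transport formula above — the characteristics originate on $\xi=0$ — so the same differentiation argument applies verbatim, with $\eta_0^{\pm}$ now defined on the larger region; the bounds on $D^k\eta_0$ there are still uniform by the remark in the proof of Lemma \ref{lem:4.1} (the wall is not yet reached, so no reflection).

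The main obstacle is bookkeeping rather than analysis: one must verify carefully, using the geometry of Fig. \ref{fig:4.1} and the definition of $\xi_1^{a,+}$, $\xi_1^{b,-}$ in \eqref{eq:4.7}, that each of the four scalar quantities in \eqref{eq:4.9}–\eqref{eq:4.10} is indeed determined purely by inlet data on the stated union of subregions, i.e. that its carrying characteristic does not cross $\widetilde{\Gamma}_{cd}$ or reflect off a wall before reaching that region. Once the regions are correctly identified, the estimates are a direct consequence of the chain rule applied to $\delta z^{(i)}_{\pm,0}\circ\eta_0^{\pm}$ together with Lemma \ref{lem:4.1}; taking $\sigma_0$ small enough that $\mathcal M_{2\sigma}$ keeps the flow supersonic (so $\lambda^i_\pm$ are smooth and bounded away from degeneracy) and $\widetilde C^i_1$ the resulting constant completes the proof. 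The treatment of the remaining subdomains $\Omega_{IV}$ and beyond, where genuine reflection and the coupling conditions on $\widetilde{\Gamma}_{cd}$ enter, is deferred to the subsequent subsections.
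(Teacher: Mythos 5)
Your analytic strategy is essentially the paper's: both arguments are pure characteristic arguments for the frozen–coefficient transport equations of $(\mathbf{\widetilde{P}_{n}})_{1}$, and both reduce the $C^{1}$ and $C^{2}$ bounds to the bounds on $D\eta_{0}$, $D^{2}\eta_{0}$ of Lemma \ref{lem:4.1}. The only organisational difference is that you differentiate the exact representation $\delta z^{i,(n)}_{\pm}=\delta z^{(i)}_{\pm,0}\circ\eta_{0}^{\pm}$ by the chain rule, whereas the paper differentiates the transport equations themselves (\eqref{eq:4.12}--\eqref{eq:4.13} and \eqref{eq:4.18}--\eqref{eq:4.19}) and closes with Gronwall after absorbing the source terms using $\|D\delta z^{i,(n-1)}\|\leq 2\sigma$. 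Since the linearized equations are homogeneous and their coefficients $\tilde{\lambda}^{i,(n-1)}_{\pm}$ are known $C^{2}$ functions, your shortcut is legitimate and yields constants of the same type; the Gronwall-type work is simply hidden inside Lemma \ref{lem:4.1}.

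There is, however, a concrete defect in the step you yourself single out as the crux, namely the ``key structural observation''. In the diagonal system \eqref{eq:3.51b} the unknown is $z=(z_{-},z_{+})^{\top}$ and the coefficient matrix is $\mathrm{diag}(\lambda_{+},\lambda_{-})$, so $\delta z^{i}_{-}$ is constant along the $\lambda_{+}$-characteristics and $\delta z^{i}_{+}$ along the $\lambda_{-}$-characteristics (this is exactly how the paper integrates $\delta z^{a,(n)}_{-}$ along $\eta=\chi^{a,(n)}_{+}$). Moreover, near the background state $\lambda_{+}>0$ and $\lambda_{-}<0$, so in $\widetilde{\Omega}^{(a)}$ the $\lambda_{+}$-family runs from the inlet to the wall $\eta=m^{(a)}$ and the $\lambda_{-}$-family to $\widetilde{\Gamma}_{cd}$, while in $\widetilde{\Omega}^{(b)}$ the roles are exchanged. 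Your text asserts instead that the $\lambda_{-}$-family in $\widetilde{\Omega}^{(a)}$ and the $\lambda_{+}$-family in $\widetilde{\Omega}^{(b)}$ both run to the walls, both cover $\Omega_{I}\cup\Omega_{II}$, and carry $\delta z^{a,(n)}_{-}$ and $\delta z^{b,(n)}_{-}$; this is wrong on the family-to-invariant association and on the propagation directions, and it is internally inconsistent with the pairing of $\delta z^{b,(n)}_{-}$ with $\widetilde{\Omega}^{(b)}\cap(\Omega_{I}\cup\Omega_{III})$ in \eqref{eq:4.9} that you then invoke (the analogous sentence for \eqref{eq:4.10} has the same problem). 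The correct bookkeeping is: $\delta z^{a}_{-}$ is determined by inlet data on $\widetilde{\Omega}^{(a)}\cap(\Omega_{I}\cup\Omega_{II})$ and $\delta z^{b}_{-}$ on $\widetilde{\Omega}^{(b)}\cap(\Omega_{I}\cup\Omega_{III})$ (both riding $\lambda_{+}$, the separating curves being $l^{0}_{+}$ and $l^{1}_{+}$), while $\delta z^{a}_{+}$ is determined on $\widetilde{\Omega}^{(a)}\cap(\Omega_{I}\cup\Omega_{III})$ and $\delta z^{b}_{+}$ on $\widetilde{\Omega}^{(b)}\cap(\Omega_{I}\cup\Omega_{II})$ (both riding $\lambda_{-}$). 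With this correction the rest of your argument goes through and reproduces \eqref{eq:4.9}--\eqref{eq:4.10}.
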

\begin{proof}
We only consider the estimates of the solution for $z^{a, (n)}_{-}$ in $\widetilde{\Omega}^{(a)}\cap(\Omega_I\cup\Omega_{II})$ since the proof
for the others are the same as $z^{a, (n)}_{-}$.
The proof is divided into three steps.

\medskip
\emph{Step 1}.
For any point $(0, \eta_{0})$ with $\eta_{0}\in[0, m^{(a)}]$, along the characteristic
$\eta=\chi^{a,(n)}_{+}(\xi,\eta_{0})$, we have $\delta z^{a,(n)}_{-}(\xi, \eta)=\delta z^{a}_{-,0}(\eta_{0})$.
Therefore, we have
\begin{eqnarray}\label{eq:4.11}
\|\delta z^{a,(n)}_{-}\|_{C^{0}(\widetilde{\Omega}^{(a)}\cap(\Omega_I\cup\Omega_{II}))}=\|\delta z_{-,0}\|_{C^{0}([0, m^{(a)}])}.
\end{eqnarray}

\medskip
\emph{Step 2}. We now work on the estimate of $\|D\delta z^{a,(n)}\|_{C^{0}(\widetilde{\Omega}^{(a)}\cap(\Omega_I\cup\Omega_{II}))}$.
To do this, we differentiate the equation $\eqref{eq:4.1}_{1}$ for $\delta z^{a, (n)}_{-}$
with respect to $\xi, \eta$ to obtain
\begin{eqnarray}\label{eq:4.12}
\partial_{\xi}( \partial_{\xi}\delta z^{a,(n)}_{-})+\tilde{\lambda}^{a,(n-1)}_{+}
\partial_{\eta}(\partial_{\xi}\delta z^{a,(n)}_{-})
=-\partial_{\xi}\tilde{\lambda}^{a,(n-1)}_{+}\partial_{\eta} \delta z^{a,(n)}_{-},
\end{eqnarray}
and
\begin{eqnarray}\label{eq:4.13}
\partial_{\xi}( \partial_{\eta}\delta z^{a,(n)}_{-})
+\tilde{\lambda}^{a,(n-1)}_{+}\partial_{\eta}(\partial_{\eta}\delta z^{a,(n)}_{-})
=-\partial_{\eta}\tilde{\lambda}^{a,(n-1)}_{+}\partial_{\eta}\delta z^{a,(n)}_{-}.
\end{eqnarray}
Then, along the $\eta=\chi^{a,(n)}_{+}(\xi,\eta_{0})$ characteristic, it holds that
\begin{eqnarray}\label{eq:4.14}
\begin{aligned}
\partial_{\xi} \delta z^{a,(n)}_{-}(\xi, \eta)&=\partial_{\xi}\delta z^{a,(n)}_{-}(0, \eta_{0})
-\int^{\xi}_{0}\big(\partial_{\tau}\tilde{\lambda}^{a,(n-1)}_{+}\partial_{\eta}\delta z^{a,(n)}_{-}\big)(\tau, \chi^{a,(n)}_+(\tau,\eta_0))d\tau\\
&=\frac{\partial \eta_{0}}{\partial \xi}\partial_{\eta_{0}}\delta z^{a}_{-,0}(\eta_0)
-\int^{\xi}_{0}\big(\partial_{\tau}\tilde{\lambda}^{a,(n-1)}_{+}\partial_{\eta}\delta z^{a,(n)}_{-}\big)(\tau; \eta_0)d\tau,
\end{aligned}
\end{eqnarray}
and
\begin{eqnarray}\label{eq:4.15}
\begin{aligned}
\partial_{\eta} \delta z^{a,(n)}_{-}(\xi, \eta)&=\partial_{\eta}\delta z^{a,(n)}_{-}(0, \eta_{0})
-\int^{\xi}_{0}\big(\partial_{\eta}\tilde{\lambda}^{a,(n-1)}_{+}\partial_{\eta}\delta z^{a,(n)}_{-}\big)(\tau,  \chi^{a,(n)}_+(\tau,\eta_0))d\tau\\
&=\frac{\partial \eta_{0}}{\partial\eta}\partial_{\eta_{0}}\delta z^{a}_{-,0}(\eta_0)
-\int^{\xi}_{0}\big(\partial_{\eta}\tilde{\lambda}^{a,(n-1)}_{+}\partial_{\eta}\delta z^{a,(n)}_{-}\big)(\tau; \eta_0)d\tau.
\end{aligned}
\end{eqnarray}

Now, we need to estimate each term in \eqref{eq:4.14} and \eqref{eq:4.15}.

First, let us consider the estimates for $\int^{\xi}_{0}\big(\partial_{\tau}\tilde{\lambda}^{a,(n-1)}_{+}\partial_{\eta}\delta z^{a,(n)}_{-}\big)(\tau, \eta_0)d\tau$ and\\  $\int^{\xi}_{0}\big(\partial_{\eta}\tilde{\lambda}^{a,(n-1)}_{+}\partial_{\eta}\delta z^{a,(n)}_{-}\big)(\tau, \eta_0)d\tau$.
Note that
\begin{eqnarray}\label{eq:4.16}
\begin{aligned}
& \int^{\xi}_{0}|\big(\partial_{\tau}\tilde{\lambda}^{a,(n-1)}_{+}\partial_{\eta}\delta z^{a,(n)}_{-}\big)
(\tau;\eta_0)|
d\tau
+\int^{\xi}_{0}|\big(\partial_{\eta}\tilde{\lambda}^{a,(n-1)}_{+}
\partial_{\eta}\delta z^{a,(n)}_{-}\big)(\tau; \eta_0)|
d\tau\\
&\ \  \leq \mathcal{C}\|D\delta z^{a,(n-1)}\|_{C^{0}(\widetilde{\Omega}^{a})}\int^{\xi}_{0}
|D\delta z^{a,(n)}_{-}(\tau; \eta_0)| 
d\tau\\
&\ \  \leq 2\mathcal{C}\sigma\int^{\xi}_{0}
|D\delta z^{a,(n)}_{-}(\tau;\eta_0)|
d\tau,
\end{aligned}
\end{eqnarray}
where $\mathcal{C}$ is a positive constant that depends only
on the background solution $ \widetilde{\underline{U}}$ and $\sigma$.
Hence $\mathcal{C}$ essentially depends only on $ \widetilde{\underline{U}}$
if $\sigma$ is small.
%
Then by \eqref{eq:4.16}
and Lemma \ref{lem:4.1}, we have
\begin{eqnarray*}
\begin{aligned}
|D\delta z^{a,(n)}_{-} (\xi,\chi^{a,(n)}_{+}(\xi,\eta_0))|
& \leq \|D\eta_{0}\|_{C^{0}(\tilde{\Omega}_a)}
\| \partial_{\eta_{0}}\delta z^{a}_{-,0}\|_{C^{0}([0,m^{(a)}])}\\
&\ \ \ \  + \int^{\xi}_{0}|\big(\partial_{\tau}\tilde{\lambda}^{a,(n-1)}_{+}\partial_{\eta}\delta z^{a,(n)}_{-}\big)
(\tau; \eta_0)|
d\tau\\
&\ \ \ \  +\int^{\xi}_{0}|\big(\partial_{\eta}\tilde{\lambda}^{a,(n-1)}_{+}
\partial_{\eta}\delta z^{a,(n)}_{-}\big)(\tau;\eta_0)|
d\tau\\
&\leq  \mathcal{C}\| \partial_{\eta_{0}}\delta z^{a}_{-,0}\|_{C^{0}([0,m^{(a)}])}
+2\mathcal{C}\sigma\int^{\xi}_{0}
|D\delta z^{a,(n)}_{-}(\tau;\eta_0)|
d\tau,
\end{aligned}
\end{eqnarray*}
where $\mathcal{C}$ is a positive constant
that depends only on the background solution $ \widetilde{\underline{U}}$ and $L, \gamma$
for $\sigma$ sufficiently small.
 Applying the Gronwall inequality  yields
\begin{eqnarray*}
\begin{aligned}
\|D\delta z^{a,(n)}_{-}\|_{C^{0}(\widetilde{\Omega}^{(a)}\cap(\Omega_I\cup\Omega_{II}))}
& \leq  \mathcal{C}\| \partial_{\eta_{0}}\delta z^{a}_{-,0}\|_{C^{0}([0,m^{(a)}])}
e^{2\mathcal{C}L\sigma}.  
\end{aligned}
\end{eqnarray*}
Choose $\sigma'_{0}>0$ such that $e^{2\mathcal{C}L\sigma}\leq 2$ for $\sigma \in (0, \sigma'_{0})$.
Then, it follows that
\begin{eqnarray}\label{eq:4.17}
\begin{aligned}
\|D\delta z^{a,(n)}_{-}\|_{C^{0}(\widetilde{\Omega}^{(a)}\cap(\Omega_I\cup\Omega_{II}))}
\leq  2\mathcal{C}\| \partial_{\eta_{0}}\delta z^{a}_{-,0}\|_{C^{0}([0,m^{(a)}])}.
\end{aligned}
\end{eqnarray}

\medskip
\par \emph{Step 3}. Finally, let us consider the estimate for $\|D^{2}\delta z^{a,(n)}_{-}\|_{C^{0}(\widetilde{\Omega}^{(a)}\cap(\Omega_I\cup\Omega_{II}))}$.
To do this, we differentiate equations \eqref{eq:4.12} \eqref{eq:4.13} with respect to
$(\xi, \eta)$ again to obtain
\begin{eqnarray}\label{eq:4.18}
\begin{aligned}
&\partial_{\xi}( \partial^2_{\xi\xi}\delta z^{a,(n)}_{-})+\tilde{\lambda}^{a,(n-1)}_{+}
\partial_{\eta}(\partial^{2}_{\xi\xi}\delta z^{a,(n)}_{-})\\
& \ \ \ \ =-2\partial_{\xi}\tilde{\lambda}^{a,(n-1)}_{+}\partial^{2}_{\xi\eta} \delta z^{a,(n)}_{-}
-\partial_{\xi\xi}\tilde{\lambda}^{a,(n-1)}_{+}\partial_{\eta} \delta z^{a,(n)}_{-},\\
&\partial_{\xi}( \partial^2_{\xi\eta}\delta z^{a,(n)}_{-})+\tilde{\lambda}^{a,(n-1)}_{+}
\partial_{\eta}(\partial^{2}_{\xi\eta}\delta z^{a,(n)}_{-})\\
& \ \ \ \ =-\partial_{\eta}\tilde{\lambda}^{a,(n-1)}_{+}\partial^{2}_{\xi\eta} \delta z^{a,(n)}_{-}
-\partial_{\xi}\tilde{\lambda}^{a,(n-1)}_{+}\partial^{2}_{\eta\eta} \delta z^{a,(n)}_{-}
-\partial_{\xi\eta}\tilde{\lambda}^{a,(n-1)}_{+}\partial_{\eta} \delta z^{a,(n)}_{-},
\end{aligned}
\end{eqnarray}
and
\begin{eqnarray}\label{eq:4.19}
\begin{aligned}
&\partial_{\xi}( \partial_{\eta\eta}\delta z^{a,(n)}_{-})
+\tilde{\lambda}^{a,(n-1)}_{+}\partial_{\eta}(\partial^{2}_{\eta\eta}\delta z^{a,(n)}_{-})\\
&\ \ \ \ =-2\partial_{\eta}\tilde{\lambda}^{a,(n-1)}_{+}\partial^{2}_{\eta\eta}\delta z^{a,(n)}_{-}
-\partial^{2}_{\eta\eta}\tilde{\lambda}^{a,(n-1)}_{+}\partial_{\eta} \delta z^{a,(n)}_{-}.
\end{aligned}
\end{eqnarray}
Then, integrating  equations \eqref{eq:4.18} and \eqref{eq:4.19} along the characteristic
$\eta=\chi^{a,(n)}_{+}(\xi,\eta_{0})$ from $0$ to $\xi$, we have
\begin{eqnarray}\label{eq:4.20}
\begin{aligned}
\partial^2_{\xi\xi}\delta z^{a,(n)}_{-}&=\partial^2_{\xi\xi}\delta z^{a,(n)}_{-}(0,\eta_0)
-2\int^{\xi}_{0}\partial_{\tau}\tilde{\lambda}^{a,(n-1)}_{+}\partial^{2}_{\tau\eta} \delta z^{a,(n)}_{-}d\tau\\
&\ \ \ \ -\int^{\xi}_{0}\partial^{2}_{\tau\tau}\tilde{\lambda}^{a,(n-1)}_{+}\partial_{\eta} \delta z^{a,(n)}_{-}d\tau,\\
&=\big(\frac{\partial\eta_{0}}{\partial\xi}\big)^{2}\partial^{2}_{\eta_{0}\eta_{0}}\delta z^{a}_{-,0}
+\frac{\partial^{2}\eta_{0}}{\partial\xi^{2}}\partial_{\eta_{0}}\delta z^{a}_{-,0}
-2\int^{\xi}_{0}\partial_{\tau}\tilde{\lambda}^{a,(n-1)}_{+}\partial^{2}_{\tau\eta} \delta z^{a,(n)}_{-}d\tau\\
&\ \ \ \ -\int^{\xi}_{0}\partial^{2}_{\tau\tau}\tilde{\lambda}^{a,(n-1)}_{+}\partial_{\eta} \delta z^{a,(n)}_{-}d\tau\\
&:=J_{11}+E_{11},
\end{aligned}
\end{eqnarray}
\begin{eqnarray}\label{eq:4.21}
\begin{aligned}
\partial^2_{\xi\eta}\delta z^{a,(n)}_{-}
&=\frac{\partial\eta_{0}}{\partial\xi}\frac{\partial\eta_{0}}{\partial\eta}\partial^{2}_{\eta_{0}\eta_{0}}\delta z^{a}_{-,0}
+\frac{\partial^{2}\eta_{0}}{\partial\xi\partial\eta}\partial_{\eta_{0}}\delta z^{a}_{-,0}
-\int^{\xi}_{0}\partial_{\eta}\tilde{\lambda}^{a,(n-1)}_{+}\partial^{2}_{\tau\eta} \delta z^{a,(n)}_{-}d\tau\\
&\ \ \ \ -\int^{\xi}_{0}\partial_{\xi}\tilde{\lambda}^{a,(n-1)}_{+}\partial^{2}_{\eta\eta} \delta z^{a,(n)}_{-}d\tau
-\int^{\xi}_{0}\partial^{2}_{\xi\eta}\tilde{\lambda}^{a,(n-1)}_{+}\partial_{\eta} \delta z^{a,(n)}_{-}d\tau\\
&:=J_{12}+E_{12},
\end{aligned}
\end{eqnarray}
and
\begin{eqnarray}\label{eq:4.22}
\begin{aligned}
\partial^2_{\eta\eta}\delta z^{a,(n)}_{-}
&=\Big(\frac{\partial\eta_{0}}{\partial\eta}\Big)^{2}\partial^{2}_{\eta_{0}\eta_{0}}\delta z^{a}_{-,0}
+\frac{\partial^{2}\eta_{0}}{\partial\eta^{2}}\partial_{\eta_{0}}\delta z^{a}_{-,0}\\
&\ \ \ \ -2\int^{\xi}_{0}\partial_{\eta}\tilde{\lambda}^{a,(n-1)}_{+}\partial^{2}_{\eta\eta} \delta z^{a,(n)}_{-}d\tau -\int^{\xi}_{0}\partial^{2}_{\eta\eta}\tilde{\lambda}^{a,(n-1)}_{+}\partial_{\eta} \delta z^{a,(n)}_{-}d\tau\\
&:=J_{22}+E_{22}.
\end{aligned}
\end{eqnarray}
Here $J_{11}, J_{12}, J_{22}$ denote the first two terms on the right hands of \eqref{eq:4.20}-\eqref{eq:4.22}
and $E_{11}, E_{12}, E_{22}$ represent the rest of them.

For $J_{11},\ J_{12},\ J_{22}$, we have, using Lemma \ref{lem:4.1},
\begin{eqnarray}\label{eq:4.23a}
\begin{aligned}
|J_{11}+J_{12}+J_{22}|
&
\leq\mathcal{C}\big(\|D\eta_{0}\|^{2}_{C^{0}(\widetilde{\Omega}_a)}
+\|D^{2}\eta_{0}\|_{C^{0}(\widetilde{\Omega}_a)}\big)
\|\delta z^{a}_{-,0}\|_{C^{2}([0, m^{(a)}])}\\
&
\leq \mathcal{C}\|\delta z^{a}_{-,0}\|_{C^{2}([0, m^{(a)}])},
\end{aligned}
\end{eqnarray}
where $\mathcal{C}$ depends only on $\widetilde{\underline{U}}$ and $L, \gamma$.

Next, for $E_{11},\ E_{12},\ E_{22}$, we have
\begin{eqnarray}\label{eq:4.24}
\begin{aligned}
&|E_{11}+E_{12}+E_{22}|
\\
&  \leq\mathcal{C}\|\delta z^{a,(n-1)}\|_{C^{2}(\widetilde{\Omega}^{a})}
\int^{\xi}_{0}|D^{2}\delta z^{a,(n)}_{-}(\tau;\eta_0)|
d\tau
+\mathcal{C}\|\delta z^{a,(n-1)}\|_{C^{2}(\widetilde{\Omega}^{a})}
\int^{\xi}_{0}|D\delta z^{a,(n)}_{-}(\tau;\eta_0)|
d\tau\\
&  \leq\mathcal{C}\|\delta z^{a}_{-,0}\|_{C^{2}([0, m^{(a)}])}+2\mathcal{C}\sigma\int^{\xi}_{0}|D^{2}\delta z^{a,(n)}_{-}(\tau;\eta_0)|
d\tau,
\end{aligned}
\end{eqnarray}
where the positive constant $\mathcal{C}$ depends only on $ \widetilde{\underline{U}}$ and $L,\gamma$,
using the estimate \eqref{eq:4.17} and taking $0<\sigma<1$ sufficiently small.

Combining \eqref{eq:4.23a} and \eqref{eq:4.24} together,  we have
\begin{eqnarray*}
\begin{aligned}
|D^{2}\delta z^{a,(n)}_{-}(\xi,\chi_+^{a,(n)}(\xi,\eta_0))|
& \leq |\sum _{i,j=1,2}J_{ij}|
+|\sum _{i,j=1,2}E_{ij}|
\\
& \leq\mathcal{C}\|\delta z^{a}_{-,0}\|_{C^{2}([0, m^{(a)}])}+2\mathcal{C}\sigma\int^{\xi}_{0}|D^{2}\delta z^{a,(n)}_{-}(\tau;\eta_0)|
d\tau.
\end{aligned}
\end{eqnarray*}
Then by the Gronwall inequality, we have
\begin{eqnarray*}
\begin{aligned}
|D^{2}\delta z^{a,(n)}_{-}(\xi,\chi_+^{a,(n)}(\xi,\eta_0))|
 \leq \mathcal{C}\|\delta z^{a}_{-,0}\|_{C^{2}([0, m^{(a)}])}
e^{2\mathcal{C}\sigma\xi}.
\end{aligned}
\end{eqnarray*}
Hence, we can choose $\sigma''_{0}>0$ depending only on $\widetilde{\underline{U}}$ and $L,\gamma$
small enough such that for $\sigma \in (0, \sigma''_{0})$, it holds that
\begin{eqnarray}\label{eq:4.23}
\begin{aligned}
\|D^{2}\delta z^{a,(n)}_{-}\|_{C^{0}(\widetilde{\Omega}^{(a)}\cap(\Omega_I\cup\Omega_{II}))}
\leq  2\mathcal{C}\| \delta z^{a}_{-,0}\|_{C^{2}([0,m^{(a)}])}.
\end{aligned}
\end{eqnarray}
Finally,  from the  estimates \eqref{eq:4.12}, \eqref{eq:4.17} and \eqref{eq:4.23} together,   there
exist constants $\widetilde{C}^{a}_{+,1}>0$ and $\sigma_{0}=\min\{\sigma'_{0},\sigma''_{0}\}$
depending only on $\widetilde{\underline{U}}$ and $L$ such that for $\sigma\in (0,\sigma_{0})$, it holds that
\begin{eqnarray*}
\begin{aligned}
\|\delta z^{a,(n)}_{-}\|_{C^{2}(\widetilde{\Omega}^{(a)}\cap(\Omega_I\cup\Omega_{II}))}
\leq  \widetilde{C}^{a}_{1,+}\| \delta z^{a}_{-,0}\|_{C^{2}([0,m^{(a)}])}.
\end{aligned}
\end{eqnarray*}
This completes the proof of the proposition.
\end{proof}

\subsection{Estimates for the boundary value problem $(\mathbf{\widetilde{P}_{n}})$
in $\Omega_{II}$}

In this subsection, we shall analyze the boundary value problem $(\mathbf{\widetilde{P}_{n}})$
with the boundary conditions on $\widetilde{\Gamma}_{-}$ and $\widetilde{\Gamma}_{+}$
respectively. Note that the mathematical problem $(\mathbf{\widetilde{P}_{n}})$ in $\Omega_{II}$ can be reformulated as the following problem:
\begin{eqnarray}\label{eq:4.26}
\begin{aligned}
(\mathbf{\widetilde{P}_{n}})_{2}\quad \left\{
\begin{array}{llll}
\partial_{\xi}\delta z^{a,(n)}_{+}+\tilde{\lambda}^{a,(n-1)}_{-}\partial_{\eta}\delta z^{a,(n)}_{+}=0,
&\ \  \ in\quad \widetilde{\Omega}^{(a)}, \\
\partial_{\xi}\delta z^{b,(n)}_{-}+\tilde{\lambda}^{b,(n-1)}_{+}\partial_{\eta}\delta z^{b,(n)}_{-}=0,
&\ \  \ in\quad \widetilde{\Omega}^{(b)}, \\
\delta z^{a,(n)}_{-}+\delta z^{a,(n)}_{+}=2\arctan g'_{+},  &\ \ \ on\quad\widetilde{ \Gamma}_{+},\\
\delta z^{b,(n)}_{-}+\delta z^{b,(n)}_{+}=2\arctan g'_{-},  &\ \ \ on\quad\widetilde{ \Gamma}_{-},  \\
\delta z^{a,(n)}=\delta z^{(a)}_{0}, &\ \ \ on\quad  \xi=0,\\
\delta z^{b,(n)}=\delta z^{(b)}_{0}, &\ \ \ on\quad  \xi=0.
\end{array}
\right.
\end{aligned}
\end{eqnarray}
\par Similarly to \S 4.1,  we shall study  the problem $(\mathbf{\widetilde{P}_{n}})_{2}$ also
via the characteristics method but different from the one in \S 4.1 due to the reflection of the characteristics by the nozzle walls.

Let $\xi=\psi^{i,(n)}_{-}(\eta,\xi^{a})$ (or $\xi=\psi^{i,(n)}_{+}(\eta,\xi^{b})$)
be the characteristic curves corresponding to $\lambda_-$ ( or $\lambda_+$) and passing
through  the points $(\xi^{a}, m^{(a)})$ ( or $(\xi^{b}, -m^{(b)})$), {\emph{i.e.},}  defined by
\begin{eqnarray}\label{eq:4.27}
\begin{aligned}
\left\{
\begin{array}{llll}
\frac{d\psi^{a,(n)}_{-}}{d\eta}=\frac{1}{\tilde{\lambda}^{a,(n-1)}_{-}(\eta,\psi^{a,(n)}_{-})},\\
\psi^{a,(n)}_{-}(m^{(a)},\xi^{a})=\xi^{a},
\end{array}
\right.
\end{aligned}
\end{eqnarray}
and
\begin{eqnarray}\label{eq:4.28}
\begin{aligned}
\left\{
\begin{array}{llll}
\frac{d\psi^{b,(n)}_{+}}{d\eta}=\frac{1}{\tilde{\lambda}^{b,(n-1)}_{+}(\eta,\psi^{b,(n)}_{+})},\\
\psi^{a,(n)}_{+}(-m^{(b)},\xi^{b})=\xi^{b},
\end{array}
\right.
\end{aligned}
\end{eqnarray}
where $\xi^{a},\ \xi^{b}\in [0, L]$.
 Then,
\begin{eqnarray*}
\begin{aligned}
&\xi=\xi^{a}+\int^{\eta}_{m^{(a)}}\frac{1}{\tilde{\lambda}^{a,(n-1)}_{-}(\psi^{a,(n)}_{-}(\tau,\xi^a),\tau)}d\tau,\ \ \
\xi=\xi^{b}+\int^{\eta}_{-m^{(b)}}\frac{1}{\tilde{\lambda}^{b,(n-1)}_{+}(\psi^{b,(n)}_{+}(\tau,\xi^b),\tau)}d\tau.
\end{aligned}
\end{eqnarray*}
For any point $(\xi,\eta)$ in ${\widetilde{\Omega}^{(a)}\cap\Omega_{II}}$ (or in ${\widetilde{\Omega}^{(b)}\cap\Omega_{II}}$),
there exists a unique $\xi^a$ (or $\xi^b$) such that the characteristic corresponding to $\lambda_-$ (or $\lambda_+$) passes through $(\xi,\eta)$.
Thus we can consider $\xi^{a}$ (or $\xi^b$) as a function of $(\xi,\eta)$ in ${\widetilde{\Omega}^{(a)}\cap\Omega_{II}}$ (or in ${\widetilde{\Omega}^{(b)}\cap\Omega_{II}}$), and
  we have the following lemma for $\xi^{a}$ and $\xi^{b}$.
\begin{lemma}\label{lem:4.2}
For any $\delta z^{i,(n-1)}\in \mathcal{M}_{2\sigma}$ $(i=a, b)$, there exist constants $C^{i}_{1,1}$ and $C^{i}_{1,2}$
 depending only on $\underline{\widetilde{U}}$, $L$ and $\sigma$ such that
\begin{eqnarray*}
\begin{aligned}
\|D^{k}\xi^{i}\|_{C^{0}(\widetilde{\Omega}^{(i)}\cap\Omega_{II})}\leq C^{i}_{1,k},\ \ \   k=1,2,
\end{aligned}
\end{eqnarray*}
where $i=a$, or $b$.
\end{lemma}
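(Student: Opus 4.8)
The plan is to treat Lemma \ref{lem:4.2} in close analogy with Lemma \ref{lem:4.1}, the only difference being that the relevant characteristics now emanate from the nozzle walls instead of from the inlet, so that one reads off the footpoint $\xi^{i}$ on the wall in place of the footpoint $\eta_{0}$ on $\{\xi=0\}$. I will carry out the argument for $\xi^{a}$ on the upper wall $\widetilde{\Gamma}_{+}=\{\eta=m^{(a)}\}$ using \eqref{eq:4.27}; the estimate for $\xi^{b}$ on $\widetilde{\Gamma}_{-}$ via \eqref{eq:4.28} is entirely symmetric.

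First I would record the regularity of the transport speed. Since $z^{a,(n-1)}\in\mathcal{M}_{2\sigma}$, the function $z^{a,(n-1)}$ is $C^{2}$ with $C^{2}$-norm controlled by $\|\underline{z}^{a}\|+2\sigma$, and $\lambda^{a}_{-}$ is a $C^{2}$ function of $z$ on a neighbourhood of $\underline{z}^{a}$; moreover, since the background flow is supersonic, $\underline{\lambda}^{a}_{-}\neq 0$, so for $\sigma$ small the speed $\tilde{\lambda}^{a,(n-1)}_{-}$ stays bounded away from $0$. Hence $\tfrac{1}{\tilde{\lambda}^{a,(n-1)}_{-}}$ is a $C^{2}$ function of $(\xi,\eta)$ whose $C^{2}$-norm on $\widetilde{\Omega}^{(a)}$ is bounded by a constant depending only on $\underline{\widetilde{U}}$, $L$, $\gamma$ and $\sigma$. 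Next, viewing \eqref{eq:4.27} as an ODE in the independent variable $\eta$ with $\xi^{a}$ as a parameter, standard ODE theory gives existence and uniqueness of $\psi^{a,(n)}_{-}(\cdot\,,\xi^{a})$; differentiating \eqref{eq:4.27} in $\xi^{a}$ up to second order and applying Gronwall's inequality over the $\eta$-interval (whose length is at most $m^{(a)}+m^{(b)}$, hence $O(1)$) yields the uniform bounds $|\partial^{k}_{\xi^{a}}\psi^{a,(n)}_{-}(\eta,\xi^{a})|\leq C$ for $k=1,2$, with $\partial_{\xi^{a}}\psi^{a,(n)}_{-}$ close to $1$ when $\sigma$ is small.

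The core of the proof is the implicit relation
$$F(\xi,\eta,\xi^{a}):=\xi^{a}-\xi+\int^{\eta}_{m^{(a)}}\frac{1}{\tilde{\lambda}^{a,(n-1)}_{-}\big(\psi^{a,(n)}_{-}(\tau,\xi^{a}),\tau\big)}\,d\tau=0,$$
which holds for every $(\xi,\eta)\in\widetilde{\Omega}^{(a)}\cap\Omega_{II}$ by the very definition of $\Omega_{II}$ (each such point is joined to $\widetilde{\Gamma}_{+}$ by a $\lambda_{-}$-characteristic with a unique footpoint $\xi^{a}\in[0,L]$). Differentiating in $\xi^{a}$ gives
$$\partial_{\xi^{a}}F=1+\int^{\eta}_{m^{(a)}}\partial_{\xi}\!\Big(\tfrac{1}{\tilde{\lambda}^{a,(n-1)}_{-}}\Big)\,\partial_{\xi^{a}}\psi^{a,(n)}_{-}\,d\tau,$$
which by the previous paragraph is within $O(\sigma)$ of $1$, hence bounded away from $0$ for $\sigma$ small. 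The implicit function theorem then shows $\xi^{a}\in C^{2}(\widetilde{\Omega}^{(a)}\cap\Omega_{II})$ with
$$\frac{\partial\xi^{a}}{\partial\xi}=\frac{1}{\partial_{\xi^{a}}F},\qquad \frac{\partial\xi^{a}}{\partial\eta}=-\frac{1}{\partial_{\xi^{a}}F}\cdot\frac{1}{\tilde{\lambda}^{a,(n-1)}_{-}\big(\psi^{a,(n)}_{-}(\eta,\xi^{a}),\eta\big)},$$
so $\|D\xi^{a}\|_{C^{0}}\leq C^{a}_{1,1}$. Differentiating these identities once more expresses $D^{2}\xi^{a}$ as rational functions of $\partial_{\xi^{a}}F$, of the second derivatives of $\tfrac{1}{\tilde{\lambda}^{a,(n-1)}_{-}}$, and of $\partial^{k}_{\xi^{a}}\psi^{a,(n)}_{-}$ ($k\le2$) — all already controlled — with denominator $\partial_{\xi^{a}}F$ bounded below, giving $\|D^{2}\xi^{a}\|_{C^{0}}\leq C^{a}_{1,2}$. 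Repeating verbatim with $\widetilde{\Gamma}_{-}$, $\lambda_{+}$ and \eqref{eq:4.28} in place of $\widetilde{\Gamma}_{+}$, $\lambda_{-}$ and \eqref{eq:4.27} yields the same bounds for $\xi^{b}$, completing the proof.

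The one step I expect to require care — though it is mild — is verifying that $\partial_{\xi^{a}}F$ is uniformly bounded away from zero: this is exactly what makes the implicit function theorem applicable and keeps every denominator in the derivative formulas under control. It rests on two facts available here: that the $\eta$-interval of integration has $O(1)$ length, so the integral perturbation in $\partial_{\xi^{a}}F$ is genuinely $O(\sigma)$, and that $\underline{\lambda}^{a}_{-}\neq0$ by supersonicity of the background state. Everything else is bookkeeping parallel to the proof of Lemma \ref{lem:4.1}.
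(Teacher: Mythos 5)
Your proposal is correct and follows essentially the same route as the paper: differentiate the implicit footpoint relation $\xi=\xi^{a}+\int^{\eta}_{m^{(a)}}\bigl(\tilde{\lambda}^{a,(n-1)}_{-}\bigr)^{-1}d\tau$ along the characteristic, where your factor $\partial_{\xi^{a}}\psi^{a,(n)}_{-}$ is exactly the exponential variational factor appearing in the paper's explicit formulas, and bound the resulting rational expressions. The only difference is presentational: you make explicit (via the implicit function theorem, supersonicity of the background, and the $O(\sigma)$ smallness of the integral term) that the common denominator stays away from zero, a point the paper leaves implicit when it says the estimates ``follow from the above formulas.''
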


\begin{proof}
By the equation \eqref{eq:4.27}, the straightforward computation gives us the formulas of   the first order and the second order derivatives
of $\xi^{a}$ with respect to $\xi, \eta$, for example,
$$\frac{\partial \xi^{a}}{\partial \xi}=\frac{1}{1+\int^{\eta}_{m^{(a)}}
\partial_{\xi}(\frac{1}{\tilde{\lambda}^{a,(n-1)}_{-}})e^{\int^{\tau}_{m^{(a)}}
\partial_{\xi}(\frac{1}{\tilde{\lambda}^{a,(n-1)}_{-}}) ds} d\tau},$$
and
$$\frac{\partial^{2} \xi^{a}}{\partial \xi^{2}}=-\frac{\int^{\eta}_{m^{(a)}}
\partial_{\xi}\big(\partial_{\xi}(\frac{1}{\tilde{\lambda}^{a,(n-1)}_{-}})e^{\int^{\tau}_{m^{(a)}}
\partial_{\xi}(\frac{1}{\tilde{\lambda}^{a,(n-1)}_{-}}) ds}\big)e^{\int^{\tau}_{m^{(a)}}
\partial_{\xi}(\frac{1}{\tilde{\lambda}^{a,(n-1)}_{-}}) ds} d\tau \frac{\partial \xi^{a}}{\partial \xi}}
{\big(1+\int^{\eta}_{m^{(a)}}\partial_{\xi}(\frac{1}{\tilde{\lambda}^{a,(n-1)}_{-}})e^{\int^{\tau}_{m^{(a)}}
\partial_{\xi}(\frac{1}{\tilde{\lambda}^{a,(n-1)}_{-}}) ds} d\tau\big)^{2}},$$
and the other derivatives are similar.
Then the estimates for $D\xi^{a}$ and $D^{2}\xi^{a}$ follow from the above formulas.
In the same way, we can also get the estimates for $D\xi^{b}$ and $D^{2}\xi^{b}$.
\end{proof}

\par Our main result in this subsection is as follows.
\begin{proposition}\label{prop:4.2}
There exist constants $\widetilde{C}^{a}_{2,-}>0,\ \widetilde{C}^{b}_{2,+}>0$ and $\sigma_{1}>0$
depending only on $\underline{\widetilde{U}}$ and $L, \gamma$ such that for $\sigma\in (0,\sigma_{1})$,
if $\delta z^{i,(n-1)}\in \mathcal{M}_{2\sigma}$ and $(\delta z^{a,(n)}_{+}, \delta z^{b,(n)}_{-})$ are the solutions to
the boundary value problem $(\mathbf{\widetilde{P}_{n}})_{2}$, then
\begin{eqnarray}\label{eq:4.35}
\|\delta z^{a,(n)}_{+}\|_{C^{2}(\widetilde{\Omega}^{(a)}\cap\Omega_{II})}\leq
\widetilde{C}^{a}_{2,-}\Big(\|\delta z^{a, (n)}_{-}\|_{C^{2}([0,\xi_{1}^{a, +}]\times\{m^{(a)}\})}
+\|g_{+}-1\|_{C^{3}([0, \xi^{a,+}_{1}])}\Big),
\end{eqnarray}
and
\begin{eqnarray}\label{eq:4.36}
\|\delta z^{b,(n)}_{-}\|_{C^{2}(\widetilde{\Omega}^{(b)}\cap\Omega_{II})}\leq
\widetilde{C}^{b}_{2,+}\Big(\|\delta z^{b, (n)}_{+}\|_{C^{2}([0,\xi^{b,-}_1]\times \{-m^{(b)}\})}
+\|g_{-}+1\|_{C^{3}([0, \xi^{b,-}_{1}])}\Big),
\end{eqnarray}
where $\xi^{a,+}_{1}$ and $\xi^{b,-}_{1}$ are given in \eqref{eq:4.7}.
\end{proposition}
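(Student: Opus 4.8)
The plan is to establish \eqref{eq:4.35} for $\delta z^{a,(n)}_+$ on $\widetilde{\Omega}^{(a)}\cap\Omega_{II}$ in detail; the bound \eqref{eq:4.36} for $\delta z^{b,(n)}_-$ on $\widetilde{\Omega}^{(b)}\cap\Omega_{II}$ is proved in exactly the same way, interchanging $\lambda_-\leftrightarrow\lambda_+$, $\widetilde{\Gamma}_+\leftrightarrow\widetilde{\Gamma}_-$, $g_+\leftrightarrow g_-$ and the corresponding footpoint maps. In $\widetilde{\Omega}^{(a)}\cap\Omega_{II}$ the unknown $\delta z^{a,(n)}_+$ solves the scalar transport equation in \eqref{eq:4.26}, so it is constant along the $\lambda_-$-characteristics; since $\lambda_-<0$ near the supersonic background, tracing such a characteristic backward in $\xi$ is monotone increasing in $\eta$, and by the geometry of $\Omega_{II}$ (it lies between $l^1_-$ and $l^0_+$, below the wall $\eta=m^{(a)}$), every $\lambda_-$-characteristic through $\widetilde{\Omega}^{(a)}\cap\Omega_{II}$ issues from the upper wall $\widetilde{\Gamma}_+$ at a unique footpoint $(\xi^a,m^{(a)})$ with $\xi^a\in[0,\xi^{a,+}_1]$. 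This footpoint map $\xi^a=\xi^a(\xi,\eta)$ is exactly the one estimated in Lemma \ref{lem:4.2}, which provides $\|D^k\xi^a\|_{C^0(\widetilde{\Omega}^{(a)}\cap\Omega_{II})}\le C^a_{1,k}$, $k=1,2$, with constants depending only on $\underline{\widetilde{U}},L,\gamma$ once $\sigma$ is small.

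First I would record the consequence of the boundary condition $\delta z^{a,(n)}_-+\delta z^{a,(n)}_+=2\arctan g'_+(\xi)$ on $\widetilde{\Gamma}_+$ together with the constancy of $\delta z^{a,(n)}_+$ along $\lambda_-$-characteristics: for every $(\xi,\eta)\in\widetilde{\Omega}^{(a)}\cap\Omega_{II}$,
\begin{eqnarray*}
\delta z^{a,(n)}_+(\xi,\eta)=2\arctan g'_+\big(\xi^a(\xi,\eta)\big)-\delta z^{a,(n)}_-\big(\xi^a(\xi,\eta),\,m^{(a)}\big).
\end{eqnarray*}
Differentiating this identity by the chain rule, one bounds $\|\delta z^{a,(n)}_+\|_{C^2(\widetilde{\Omega}^{(a)}\cap\Omega_{II})}$ by the $C^2$-norm, in the single variable $\xi^a\in[0,\xi^{a,+}_1]$, of the trace function $\xi^a\mapsto 2\arctan g'_+(\xi^a)-\delta z^{a,(n)}_-(\xi^a,m^{(a)})$, times a polynomial in $\|D\xi^a\|_{C^0}$ and $\|D^2\xi^a\|_{C^0}$. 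The first factor is controlled by $\|g_+-1\|_{C^3([0,\xi^{a,+}_1])}$ (three derivatives of $g_+$ enter because of the inner $g'_+$) together with $\|\delta z^{a,(n)}_-\|_{C^2([0,\xi^{a,+}_1]\times\{m^{(a)}\})}$, and the second by the constants $C^a_{1,1},C^a_{1,2}$ of Lemma \ref{lem:4.2}; this yields \eqref{eq:4.35}. Alternatively, and in keeping with the method of \S4.1, one may differentiate the transport equation for $\delta z^{a,(n)}_+$ once and twice, integrate along the $\lambda_-$-characteristics from the wall, and close by Gronwall's inequality, bootstrapping from the $C^0$-bound to the $C^1$- and $C^2$-bounds as in Proposition \ref{prop:4.1}; in this variant the data for $\partial_\xi\delta z^{a,(n)}_+$ and $\partial_\eta\delta z^{a,(n)}_+$ on $\widetilde{\Gamma}_+$ is recovered by differentiating the boundary relation tangentially and using the transport equation itself (on the horizontal wall the $\xi$-derivative is the tangential derivative, so $\partial_\eta\delta z^{a,(n)}_+|_{\widetilde{\Gamma}_+}=-(\tilde{\lambda}^{a,(n-1)}_-)^{-1}\partial_\xi\delta z^{a,(n)}_+|_{\widetilde{\Gamma}_+}$), with Lemma \ref{lem:4.2} feeding the successive Gronwall steps.

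For \eqref{eq:4.36} the roles are mirrored: in $\widetilde{\Omega}^{(b)}\cap\Omega_{II}$ the unknown $\delta z^{b,(n)}_-$ is constant along $\lambda_+$-characteristics, all of which issue from the lower wall $\widetilde{\Gamma}_-$ with footpoint $\xi^b\in[0,\xi^{b,-}_1]$ (again estimated by Lemma \ref{lem:4.2}), and the boundary relation $\delta z^{b,(n)}_-+\delta z^{b,(n)}_+=2\arctan g'_-(\xi)$ gives $\delta z^{b,(n)}_-(\xi,\eta)=2\arctan g'_-(\xi^b(\xi,\eta))-\delta z^{b,(n)}_+(\xi^b(\xi,\eta),-m^{(b)})$, after which the same chain-rule estimate applies. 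The argument is essentially mechanical once Lemma \ref{lem:4.2} is available; the points that actually need care — rather than a real obstacle — are the uniformity of all the constants in $n$ and in the particular iterate $\delta z^{i,(n-1)}\in\mathcal{M}_{2\sigma}$ (which forces $\sigma$ small so that the $\lambda_\pm$ stay in a fixed neighbourhood of the background eigenvalues and the reflection abscissae $\xi^{a,+}_1,\xi^{b,-}_1$ stay bounded by a constant depending only on $\underline{\widetilde{U}},L,\gamma$), and the careful accounting of how many derivatives of $g_\pm$ (three) and of the wall traces of $\delta z^{a,(n)}_-,\delta z^{b,(n)}_+$ (two) are consumed.
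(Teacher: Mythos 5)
Your proposal is correct, and its second (Gronwall) variant coincides with the paper's proof; the interesting point is your primary route, which is slightly more elementary. Since the equation for $\delta z^{a,(n)}_{+}$ in $(\mathbf{\widetilde{P}_{n}})_{2}$ is homogeneous linear transport, the solution in $\widetilde{\Omega}^{(a)}\cap\Omega_{II}$ is \emph{exactly} the wall trace composed with the footpoint map, $\delta z^{a,(n)}_{+}(\xi,\eta)=2\arctan g'_{+}(\xi^{a}(\xi,\eta))-\delta z^{a,(n)}_{-}(\xi^{a}(\xi,\eta),m^{(a)})$, so the full $C^{2}$ bound drops out of the chain rule together with Lemma \ref{lem:4.2}, with no Gronwall step at all; your accounting of derivatives ($g_{+}$ up to third order through $\arctan g'_{+}$, the wall trace of $\delta z^{a,(n)}_{-}$ up to second order, $D\xi^{a},D^{2}\xi^{a}$ from Lemma \ref{lem:4.2}) matches the statement, and the geometric claim that every backward $\lambda_{-}$-characteristic from $\widetilde{\Omega}^{(a)}\cap\Omega_{II}$ lands on $\widetilde{\Gamma}_{+}$ with abscissa in $[0,\xi^{a,+}_{1}]$ is exactly what the definition of $\Omega_{II}$ provides. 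The paper instead follows the scheme of Proposition \ref{prop:4.1}: it differentiates \eqref{eq:4.37} once and twice, integrates along the $\lambda_{-}$-characteristics to get \eqref{eq:4.39}--\eqref{eq:4.40} and \eqref{eq:4.44}--\eqref{eq:4.46}, closes with Gronwall to obtain \eqref{eq:4.42} and \eqref{eq:4.48}, and uses the chain rule only to convert the wall data of the derivatives via $\xi^{a}$ (\eqref{eq:4.41}, \eqref{eq:4.47}). The two give the same constants with the same dependence on $\underline{\widetilde{U}},L,\gamma$; your direct representation is shorter for this homogeneous case, while the paper's integrate-and-Gronwall template is the one that carries over unchanged to $\Omega_{III}$ (where the contact-discontinuity conditions couple the two states) and to the difference problem $(\mathbf{\Delta P}_{n})$ in Section 5, where genuine inhomogeneous terms appear and a pure representation formula is no longer available.
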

\begin{remark}
Together with Proposition \ref{prop:4.1}, we actually have the estimates of $\delta z_{\pm}^{i,(n)}$ in $\Omega_I\cup\Omega_{II}$ for $i=a$ or $b$.
\end{remark}

\begin{proof}
Similarly to the proof of Proposition \ref{prop:4.1}, without loss of the generality, we only consider the estimate of $\delta z^{a,(n)}_{+}$. 
By $\eqref{eq:4.26}_1$,
\begin{eqnarray}\label{eq:4.37}
\begin{aligned}
\partial_{\eta}\delta z^{a,(n)}_{+}+\frac{1}{\tilde{\lambda}^{a,(n-1)}_{-}}\partial_{\xi}\delta z^{a,(n)}_{+}=0.
\end{aligned}
\end{eqnarray}
Therefore along the characteristic  $\xi=\psi^{i,(n)}_{-}(\eta,\xi^{a})$, where $\xi^{a}\in [0,\xi^{a, +}_{1}]$,
we have
\begin{eqnarray}\label{eq:4.38}
\begin{aligned}
\|\delta z^{a,(n)}_{+}\|_{C^{0}(\widetilde{\Omega}^{(a)}\cap\Omega_{II})}\leq
\|\delta z^{a,(n)}_{-}\|_{C^{0}([0, \xi^{a,+}_{1}]\times\{m^{(a)}\})}+2\|\arctan g'_{+}\|_{C^{0}([0, \xi^{a,+}_{1}])},
\end{aligned}
\end{eqnarray}
where we have used the boundary condition $\eqref{eq:4.26}_3$ on $\widetilde{\Gamma}_{+}$.

\par Next, we are going to estimate $\|D\delta z^{a,(n)}_{+}\|_{C^{0}(\widetilde{\Omega}^{(a)}\cap\Omega_{II})}$.
To do this, we need to take the derivatives on \eqref{eq:4.37} with respect to $(\xi, \eta)$
and then integrate it along the characteristic $\xi=\psi^{i,(n)}_{-}(\eta,\xi^{a})$ from  $m^{(a)}$ to $\eta$ to obtain
\begin{equation}\label{eq:4.39}
\partial_{\xi} \delta z^{a,(n)}_{+}(\psi^{i,(n)}_{-}(\eta,\xi^{a}),\eta)
=\partial_{\xi}\delta z^{a,(n)}_{+}(\xi^{a}, m^{(a)})
-{\int^{\eta}_{m^{(a)}}\partial_{\xi}\big(\frac{1}{\tilde{\lambda}^{a,(n-1)}_{-}}\big)
\partial_{\tau}\delta z^{a,(n)}_{+}(\tau;\xi^a)d\tau ,}
\end{equation}
and
\begin{equation}\label{eq:4.40}
\partial_{\eta} \delta z^{a,(n)}_{+}(\psi^{i,(n)}_{-}(\eta,\xi^{a}),\eta)
=\partial_{\eta}\delta z^{a,(n)}_{+}(\xi^{a}, m^{(a)})
-\int^{\eta}_{m^{(a)}}\partial_{\tau}\big(\frac{1}{\tilde{\lambda}^{a,(n-1)}_{-}}\big)
\partial_{\tau}\delta z^{a,(n)}_{+}(\tau;\xi^a)d\tau,
\end{equation}
where $\partial_{\tau}\delta z^{a,(n)}_{+}(\tau;\xi^a)=\partial_{\tau}\delta z^{a,(n)}_{+}(\psi^{i,(n)}_{-}(\tau,\xi^{a}),\tau)$.
Then by the argument in \emph{Step 2} of the proof of Proposition \ref{prop:4.1} with Lemma \ref{lem:4.2}, we have in $\widetilde{\Omega}_a\cap\Omega_{II}$,
\begin{eqnarray}\label{eq:4.42}
\begin{aligned}
|D\delta z^{a,(n)}_{+} (\psi^{i,(n)}_{-}(\eta,\xi^{a}),\eta)|
&\leq  \mathcal{C}\| D\delta z^{a,(n)}_{+}\|_{C^{0}([0,\xi_1^{a,+}]\times\{m^{(a)}\})},
\end{aligned}
\end{eqnarray}
where the constant $\mathcal{C}$ is positive and only depends on $\widetilde{\underline{U}}$
and $L, \gamma$ provided that $\sigma$ is small depending only $\widetilde{\underline{U}}$ and $L, \gamma$.

Notice that
\begin{eqnarray*}
\begin{aligned}
\partial_{\xi}\delta z^{a,(n)}_{+}(\xi^{a}, m^{(a)})&=\frac{\partial \xi^{a}}{\partial \xi}
\partial_{\xi^{a}}\delta z^{a,(n)}_{+}(\xi^{a}, m^{(a)})\\
&=\frac{\partial \xi^{a}}{\partial \xi}
\Big(\frac{g''_{+}}{1+(g'_{+})^{2}}-\partial_{\xi^{a}}\delta z^{a,(n)}_{-}(\xi^{a}, m^{(a)})\Big),\\
\partial_{\eta}\delta z^{a,(n)}_{+}(\xi^{a}, m^{(a)})&=\frac{\partial \xi^{a}}{\partial \eta}
\partial_{\xi^{a}}\delta z^{a,(n)}_{+}(\xi^{a}, m^{(a)})\\
&=\frac{\partial \xi^{a}}{\partial \eta}\Big(\frac{g''_{+}}{1+(g'_{+})^{2}}
-\partial_{\xi^{a}}\delta z^{a,(n)}_{-}(\xi^{a}, m^{(a)})\Big).
\end{aligned}
\end{eqnarray*}
Then by Lemma \ref{lem:4.2} and the boundary condition $\eqref{eq:4.26}_3$ on $\widetilde{\Gamma}_{+}$, we have
\begin{eqnarray}\label{eq:4.41}
\begin{aligned}
&\| D\delta z^{a,(n)}_{+}(\cdot, m^{(a)})\|_{C^{0}([0,\xi_1^{a,+}])}\\
&\ \ \ \ \leq\|D\xi^{a}\|_{C^{0}([0,\xi_1^{a,+}])}
\| \partial_{\xi^{a}}\delta z^{a,(n)}_{+}\|_{C^{0}([0,\xi_1^{a,+}])}\\
&\ \ \ \ \leq \mathcal{C}\Big(\|\frac{g''_{+}}{1+(g'_{+})^{2}}\|_{C^{0}([0,\xi^{a,+}_{1}])}
+\| \partial_{\xi^{a}}\delta z^{a,(n)}_{-}(\cdot, m^{(a)})\|_{C^{0}([0,\xi^{a,+}_{1}])}\Big)\\
&\ \ \ \ \leq \mathcal{C}\Big(\|g''_{+}\|_{C^{0}([0,\xi^{a,+}_{1}])}
+\| \partial_{\xi^{a}}\delta z^{a,(n)}_{-}(\cdot, m^{(a)})\|_{C^{0}([0,\xi^{a,+}_{1}])}\Big).
\end{aligned}
\end{eqnarray}
%
%
Combining estimates \eqref{eq:4.42} and \eqref{eq:4.41} together yields
%
\begin{eqnarray}\label{eq:4.43}
\begin{aligned}
\|D\delta z^{a,(n)}_{+}\|_{C^{0}(\widetilde{\Omega}_a\cap\Omega_{II})}
\leq \mathcal{C}\big(\|D\delta z^{a,(n)}_{-}\|_{C^{0}([0,\xi^{a,+}_{1}])}
+\|g''_{+}\|_{C^{0}([0,\xi^{a,+}_{1}])}\big),
\end{aligned}
\end{eqnarray}
where the constant $\mathcal{C}$ depends only on $\widetilde{\underline{U}}$, $L$ and $\gamma$.

 Finally, we turn to the estimate of $\|D^{2}\delta z^{a,(n)}_{+}\|_{C^{0}(\widetilde{\Omega}_a\cap\Omega_{II})}$.
We first take the derivatives with respect to
$(\xi, \eta)$ on \eqref{eq:4.39} and \eqref{eq:4.40} twice and then integrate them along the characteristic
$\xi=\psi^{a,(n)}_{-}(\eta,\xi^{a})$ from $m^{(a)}$ to $\eta$ to get
\begin{eqnarray}\label{eq:4.44}
\begin{aligned}
\partial^2_{\xi\xi}\delta z^{a,(n)}_{+}&=\partial^2_{\xi\xi}\delta z^{a,(n)}_{+}(\xi^{a},m^{(a)})
-2\int^{\eta}_{m^{(a)}}\partial_{\xi}\big(\frac{1}{\tilde{\lambda}^{a,(n-1)}_{-}}\big)
\partial^{2}_{\xi\xi} \delta z^{a,(n)}_{+}d\tau\\
&\ \ \ \ -\int^{\eta}_{m^{(a)}}\partial^{2}_{\xi\xi} \big(\frac{1}{\tilde{\lambda}^{a,(n-1)}_{-}}\big)
\partial_{\xi} \delta z^{a,(n)}_{+}d\tau,
\end{aligned}
\end{eqnarray}
\begin{eqnarray}\label{eq:4.45}
\begin{aligned}
\partial^2_{\xi\eta}\delta z^{a,(n)}_{+}&=\partial^2_{\xi\eta}\delta z^{a,(n)}_{+}(\xi^{a},m^{(a)})
-\int^{\eta}_{m^{(a)}}\Big(\partial_{\tau}\big(\frac{1}{\tilde{\lambda}^{a,(n-1)}_{-}}\big)
+\partial_{\xi}\big(\frac{1}{\tilde{\lambda}^{a,(n-1)}_{-}}\big)\Big)\partial^{2}_{\xi\tau} \delta z^{a,(n)}_{+}d\tau\\
&\ \ \ \ -\int^{\eta}_{m^{(a)}}\partial^{2}_{\xi\tau} \big(\frac{1}{\tilde{\lambda}^{a,(n-1)}_{-}}\big)
\partial_{\xi} \delta z^{a,(n)}_{+}d\tau,
\end{aligned}
\end{eqnarray}
and
\begin{eqnarray}\label{eq:4.46}
\begin{aligned}
\partial^2_{\eta\eta}\delta z^{a,(n)}_{+}&=\partial^2_{\eta\eta}\delta z^{a,(n)}_{+}(\xi^{a},m^{(a)})
-2\int^{\eta}_{m^{(a)}}\partial_{\tau}\big(\frac{1}{\tilde{\lambda}^{a,(n-1)}_{-}}\big)
\partial^{2}_{\xi\tau} \delta z^{a,(n)}_{+}d\tau\\
&\ \ \ \ -\int^{\eta}_{m^{(a)}}\partial^{2}_{\tau\tau} \frac{1}{\tilde{\lambda}^{a,(n-1)}_{-}}
\partial_{\xi} \delta z^{a,(n)}_{+}d\tau.
\end{aligned}
\end{eqnarray}
Then by the argument in \emph{Step 3} of the proof of Proposition \ref{prop:4.1} with Lemma \ref{lem:4.2}, we have   in $\widetilde{\Omega}_a\cap\Omega_{II}$,
\begin{eqnarray}\label{eq:4.48}
\begin{aligned}
|D^2\delta z^{a,(n)}_{+} (\psi^{i,(n)}_{-}(\eta,\xi^{a}),\eta)|
&\leq  \mathcal{C}\| D^2\delta z^{a,(n)}_{+}\|_{C^{0}([0,\xi_1^{a,+}]\times\{m^{(a)}\})},
\end{aligned}
\end{eqnarray}
where the positive constant $\mathcal{C}$   only depends on $\widetilde{\underline{U}}$, $L$ and $\gamma$
 provided that $\sigma$ is small depending only $\widetilde{\underline{U}}$, $L$ and $\gamma$.
By the boundary condition $\eqref{eq:4.26}_3$,  one has
\begin{eqnarray*}
\begin{aligned}
\partial^{2}_{\xi\xi}\delta z^{a,(n)}_{+}(\xi^{a}, m^{(a)})
&=\frac{\partial^{2} \xi^{a}}{\partial \xi^{2}}\Big(\frac{g''_{+}}{1+(g'_{+})^{2}}
-\partial_{\xi^{a}}\delta z^{a,(n)}_{-}(\cdot, m^{(a)})\Big)\\
&\ \ \ \  + \Big(\frac{\partial \xi^{a}}{\partial \xi}\Big)^{2}
\Big(\frac{(1+(g'_{+})^{2})g''_{+}-2g'_{+}(g''_{+})^{2}}{(1+(g'_{+})^{2})^{2}}
-\partial^{2}_{\xi^{a}\xi^{a}}\delta z^{a,(n)}_{-}(\cdot, m^{(a)})\Big),\\
\partial^{2}_{\xi\eta}\delta z^{a,(n)}_{+}(\xi^{a}, m^{(a)})
&=\frac{\partial^{2} \xi^{a}}{\partial \xi\partial \eta}\Big(\frac{g''_{+}}{1+(g'_{+})^{2}}
-\partial_{\xi^{a}}\delta z^{a,(n)}_{-}(\cdot, m^{(a)})\Big)\\
&\ \ \ \  + \frac{\partial \xi^{a}}{\partial \xi}\frac{\partial \xi^{a}}{\partial \eta}
\Big(\frac{(1+(g'_{+})^{2})g''_{+}-2g'_{+}(g''_{+})^{2}}{(1+(g'_{+})^{2})^{2}}
-\partial^{2}_{\xi^{a}\xi^{a}}\delta z^{a,(n)}_{-}(\cdot, m^{(a)})\Big),\\
\end{aligned}
\end{eqnarray*}
and
\begin{eqnarray*}
\begin{aligned}
\partial^{2}_{\eta\eta}\delta z^{a,(n)}_{+}(\xi^{a}, m^{(a)})
&=\frac{\partial^{2} \xi^{a}}{\partial^{2} \eta}\Big(\frac{g''_{+}}{1+(g'_{+})^{2}}
-\partial_{\xi^{a}}\delta z^{a,(n)}_{-}(\cdot, m^{(a)})\Big)\\
&\ \ \ \  +\Big(\frac{\partial \xi^{a}}{\partial \eta}\Big)^{2}
\Big(\frac{(1+(g'_{+})^{2})g'''_{+}-2g'_{+}(g''_{+})^{2}}{(1+(g'_{+})^{2})^{2}}
-\partial^{2}_{\xi^{a}\xi^{a}}\delta z^{a,(n)}_{-}(\cdot, m^{(a)})\Big).
\end{aligned}
\end{eqnarray*}
From Lemma \ref{lem:4.2}, we have
\begin{eqnarray}\label{eq:4.47}
\begin{aligned}
&\|D^{2}\delta z^{a,(n)}_{+}(\cdot, m^{(a)})\|_{C^{0}([0,\xi^{a,+}_{1}])}\\
&\ \ \ \ \leq \|D^{2}\xi^{a}\|_{C^{0}(\widetilde{\Omega}^{(a)})}(\|g''_{+}\|_{C^{0}([0,\xi^{a,+}_{1}])}
+\|D\delta z^{a,(n)}_{-}(\cdot, m^{(a)})\|_{C^{0}([0,\xi^{a,+}_{1}])})\\
&\ \ \ \ \ \ \ +\|(D\xi^{a})^{2}\|_{C^{0}(\widetilde{\Omega}^{(a)})}\big(\|g'''_{+}\|_{C^{0}([0,\xi^{a,+}_{1}])}
+\|D^{2}\delta z^{a,(n)}_{-}(\cdot, m^{(a)})\|_{C^{0}([0,\xi^{a,+}_{1}])}\big)\\
&\ \ \ \ \leq \mathcal{C}\Big(\|g''_{+}\|_{C^{1}([0,\xi^{a,+}_{1}])}
+\|D\delta z^{a,(n)}_{-}(\cdot, m^{(a)})\|_{C^{1}([0,\xi^{a,+}_{1}])}\Big),
\end{aligned}
\end{eqnarray}
where the constant $\mathcal{C}$ depends only on $\widetilde{\underline{U}}$, $L$ and $\gamma$.
The estimates \eqref{eq:4.48} and \eqref{eq:4.47} lead to the following estimate:
\begin{eqnarray}\label{eq:4.49b}
\begin{aligned}
\|D^{2}\delta z^{a,(n)}_{+}\|_{C^{0}(\widetilde{\Omega}^{(a)}\cap\Omega_{II})}
 \leq \mathcal{C}\big(\|D\delta z^{a,(n)}_{-}\|_{C^{1}([0,\xi^{a,+}_{1}])}
+\|g''_{+}\|_{C^{1}([0,\xi^{a,+}_{1}])}\big).
\end{aligned}
\end{eqnarray}
Therefore, by \eqref{eq:4.38}, \eqref{eq:4.43} and \eqref{eq:4.49}, there
exist positive constants $\widetilde{C}^{a}_{2,-}$ and $\sigma_{1}$ 
depending only on $\widetilde{\underline{U}}$ and $L$ such that for $\sigma\in (0,\sigma_{1})$,
the estimate \eqref{eq:4.35} holds. This completes the proof of the proposition.
\end{proof}

\begin{remark}\label{remark:4.2}
Following the same argument in the proof above,  
taking the integration along the characteristics up to the contact discontinuity $\{\eta=0\}\cap\Omega_{III}$ for the  equations \eqref{eq:4.37}, \eqref{eq:4.39}--\eqref{eq:4.40} and \eqref{eq:4.44}--\eqref{eq:4.46}, we can actually obtain
\begin{eqnarray*}
\begin{aligned}
&\|\delta z^{a, (n)}_{+}\|_{C^{2}(\Omega_{III}\cap\{\eta=0\})}
+\|\delta z^{b, (n)}_{-}\|_{C^{2}(\Omega_{III}\cap\{\eta=0\})}\\
\leq& \widetilde{C}^{a}_{2,-}\Big(\|\delta z^{a, (n)}_{-}\|_{C^{2}([0,\xi_{1}^{a, +}]\times\{m^{(a)}\})}
+\|g_{+}-1\|_{C^{3}([0, \xi^{a,+}_{1}])}\Big) \\
 &+\widetilde{C}^{a}_{2,+}\Big(\|\delta z^{a, (n)}_{+}\|_{C^{2}([0,\xi_{1}^{b, -}]\times\{-m^{(b)}\})}
+\|g_{-}+1\|_{C^{3}([0, \xi^{b,-}_{1}])}\Big).
\end{aligned}
\end{eqnarray*}
\end{remark}

\subsection{Estimates for the boundary value problem $(\mathbf{\widetilde{P}_{n}})$
in $\Omega_{III}$}
Based on Propositions \ref{prop:4.1} and   \ref{prop:4.2}, in this subsection we will analyze  the boundary value problem $(\mathbf{\widetilde{P}_{n}})$
in $\Omega_{III}$ which involves the contact discontinuity on $\eta=0$. 
The boundary value problem $(\mathbf{\widetilde{P}_{n}})$ in $\Omega_{III}$ can be described
as the following:
\begin{eqnarray}\label{eq:4.49}
(\mathbf{\widetilde{P}_{n}})_{3}
\left\{
\begin{array}{llll}
\partial_{\xi}\delta z^{a,(n)}+diag( \tilde{\lambda}^{a,(n-1)}_{+}, \tilde{\lambda}^{a,(n-1)}_{-})
\partial_{\eta}\delta z^{a,(n)}=0, &  in\ \widetilde{\Omega}^{(a)},  \\
\partial_{\xi}\delta z^{b,(n)}+diag( \tilde{\lambda}^{b,(n-1)}_{+}, \tilde{\lambda}^{b,(n-1)}_{-})
\partial_{\eta}\delta z^{b,(n)}=0, &  in\ \widetilde{\Omega}^{(b)},  \\
\delta z^{a, (n)}_{-}-\delta z^{b, (n)}_{+}=\delta z^{b, (n)}_{-}-\delta z^{a, (n)}_{+},
& on\ \widetilde{\Gamma}_{cd},\\
\alpha^{(n-1)}\delta z^{a,(n)}_{-}+\beta^{(n-1)}\delta z^{b,(n)}_{+}
=\alpha^{(n-1)}\delta z^{a,(n)}_{+}+\beta^{(n-1)}\delta z^{b,(n)}_{-}+c(\xi),
&  on\ \widetilde{\Gamma}_{cd},
\end{array}
\right.
\end{eqnarray}
where $\alpha^{(n-1)}, \beta^{(n-1)}$ and $c(\xi)$ are given  in \eqref{eq:3.48} and \eqref{eq:3.49}.

The way of studying the problem $(\mathbf{\widetilde{P}_{n}})_{3}$ in $\Omega_{III}$ is similar to the way in \S 4.2
except the complicated boundary conditions $\eqref{eq:4.49}_{3}$ and $\eqref{eq:4.49}_4$ on $\widetilde{\Gamma}_{cd}$.
Moreover, following the argument in \S4.1 and \S4.2, we can obtain the estimate of $\delta z^{a,(n)}_+$ in
${\widetilde{\Omega}_a\cap\Omega_{III}}$ and the estimate of $\delta z^{b,(n)}_-$
in ${\widetilde{\Omega}_b\cap\Omega_{III}}$. Then we only need to prove the following proposition.

\begin{proposition}\label{prop:4.3}
There exist constants $\widetilde{C}^{a}_{2,+}>0,\ \widetilde{C}^{b}_{2,-}>0$ and $\sigma_{2}>0$
depending only on $\widetilde{\underline{U}}$, $L$ and $\gamma$ such that for $\sigma\in (0,\sigma_{2})$,
if $\delta z^{i,(n-1)}\in \mathcal{M}_{2\sigma}$ and  $(\delta z^{a,(n)}_{-}, \delta z^{b,(n)}_{+})$ is the solution to
the free boundary value problem $(\mathbf{\widetilde{P}_{n}})_{3}$, then
\begin{eqnarray}\label{eq:4.59}
\begin{aligned}
\|\delta z^{a,(n)}_{-}\|_{C^{2}(\widetilde{\Omega}^{(a)}\cap\Omega_{III})}
& \leq\widetilde{C}^{a}_{2,+}\Big(\|\delta z^{a, (n)}_{+}\|_{C^{2}(\Omega_{III}\cap\{\eta=0\})}
+\|\delta z^{b, (n)}_{-}\|_{C^{2}(\Omega_{III}\cap\{\eta=0\})}\\
&\ \  +\sum_{i=a,b}\big(\|\widetilde{S}^{i}_{0}-\underline{S}^{i}\|_{C^{2}( \Sigma_{i})}
+\|\widetilde{B}^{i}_{0}-\underline{B}^{i}\|_{C^{2}( \Sigma_{i})}\big)\Big),
\end{aligned}
\end{eqnarray}
and
\begin{eqnarray}\label{eq:4.60}
\begin{aligned}
\|\delta z^{b,(n)}_{+}\|_{C^{2}(\widetilde{\Omega}^{(b)}\cap\Omega_{III})}
&\leq\widetilde{C}^{b}_{2,-}\Big(\|\delta z^{a, (n)}_{+}\|_{C^{2}(\Omega_{III}\cap\{\eta=0\})}
+\|\delta z^{b, (n)}_{-}\|_{C^{2}(\Omega_{III}\cap\{\eta=0\})}\\
&\ \  +\sum_{i=a,b}\big(\|\widetilde{S}^{i}_{0}-\underline{S}^{i}\|_{C^{2}( \Sigma_{i})}
+\|\widetilde{B}^{i}_{0}-\underline{B}^{i}\|_{C^{2}( \Sigma_{i})}\big)\Big),
\end{aligned}
\end{eqnarray}
where $\Sigma_{a}=(0, m^{(a)})$ and $\Sigma_{b}=(-m^{(b)},0)$.
\end{proposition}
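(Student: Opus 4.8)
The plan is to first reduce the problem on $\Omega_{III}$ to an algebraic problem on the contact line $\widetilde{\Gamma}_{cd}\cap\Omega_{III}$, and then to propagate the resulting traces into the interior by exactly the characteristic argument already used in the proofs of Propositions \ref{prop:4.1} and \ref{prop:4.2}. The key structural observation is that on $\widetilde{\Gamma}_{cd}$ the two \emph{incoming} Riemann invariants are $\delta z^{a,(n)}_{+}$, carried down the $\lambda^{a}_{-}$-characteristics from $\widetilde{\Omega}^{(a)}\cap\Omega_{II}$, and $\delta z^{b,(n)}_{-}$, carried up the $\lambda^{b}_{+}$-characteristics from $\widetilde{\Omega}^{(b)}\cap\Omega_{II}$; their traces on $\widetilde{\Gamma}_{cd}\cap\Omega_{III}$ are already controlled by Remark \ref{remark:4.2}. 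The two \emph{outgoing} invariants are $\delta z^{a,(n)}_{-}$, which leaves $\widetilde{\Gamma}_{cd}$ along $\lambda^{a}_{+}$-characteristics into $\widetilde{\Omega}^{(a)}\cap\Omega_{III}$, and $\delta z^{b,(n)}_{+}$, which leaves along $\lambda^{b}_{-}$-characteristics into $\widetilde{\Omega}^{(b)}\cap\Omega_{III}$; these are precisely the quantities we must bound.

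First I would regard $\eqref{eq:4.49}_{3}$ and $\eqref{eq:4.49}_{4}$ as a $2\times2$ linear system on $\widetilde{\Gamma}_{cd}\cap\Omega_{III}$ for the traces of $(\delta z^{a,(n)}_{-},\,\delta z^{b,(n)}_{+})$, with right-hand side assembled from the (known) traces of $(\delta z^{a,(n)}_{+},\,\delta z^{b,(n)}_{-})$ and from $c(\xi)$. Its coefficient matrix has rows $(1,-1)$ and $(\alpha^{(n-1)},\beta^{(n-1)})$, with determinant $\alpha^{(n-1)}+\beta^{(n-1)}$, which stays close to a positive constant depending only on $\underline{\widetilde{U}},\gamma$ for $\sigma$ small since $\partial_{\widetilde p}\Theta>0$ by \eqref{eq:3.48b}; hence the system is uniquely solvable and the two sought traces are given by explicit rational expressions in $\alpha^{(n-1)},\,\beta^{(n-1)},\,c$ and the incoming traces. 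Differentiating these expressions once and twice in $\xi$ then bounds $\|\delta z^{a,(n)}_{-}\|_{C^{2}(\Omega_{III}\cap\{\eta=0\})}$ and $\|\delta z^{b,(n)}_{+}\|_{C^{2}(\Omega_{III}\cap\{\eta=0\})}$ by a constant depending only on $\underline{\widetilde{U}},L,\gamma$ times $\big(\|\delta z^{a,(n)}_{+}\|_{C^{2}(\Omega_{III}\cap\{\eta=0\})}+\|\delta z^{b,(n)}_{-}\|_{C^{2}(\Omega_{III}\cap\{\eta=0\})}+\|c\|_{C^{2}([0,L])}\big)$. This uses two points: by \eqref{eq:3.48}, $\alpha^{(n-1)}$ and $\beta^{(n-1)}$ are $C^{2}$ with norm controlled in terms of $\underline{\widetilde{U}},L,\gamma$ alone, because $\delta z^{i,(n-1)}\in\mathcal{M}_{2\sigma}$ bounds $\|z^{i,(n-1)}\|_{C^{2}}$ hence $\|p^{i,(n-1)}\|_{C^{2}}$ while the residual dependence on $\widetilde S^{i}_{0},\widetilde B^{i}_{0}$ is controlled by the smallness hypothesis; and by \eqref{eq:3.49}, $c$ vanishes identically when $\widetilde S^{i}_{0}=\underline{S}^{i}$, $\widetilde B^{i}_{0}=\underline{B}^{i}$, so $\|c\|_{C^{2}([0,L])}\le\mathcal{C}\sum_{i=a,b}\big(\|\widetilde S^{i}_{0}-\underline{S}^{i}\|_{C^{2}(\Sigma_{i})}+\|\widetilde B^{i}_{0}-\underline{B}^{i}\|_{C^{2}(\Sigma_{i})}\big)$. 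Finally, via the transport equations $\eqref{eq:4.49}_{1}$--$\eqref{eq:4.49}_{2}$, these traces together with their $\xi$-derivatives determine the full first- and second-order gradients of $\delta z^{a,(n)}_{-}$ and $\delta z^{b,(n)}_{+}$ on $\widetilde{\Gamma}_{cd}$.

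With these Cauchy data on $\widetilde{\Gamma}_{cd}\cap\Omega_{III}$ at hand, the remaining step is a verbatim repetition of Steps 1--3 of the proof of Proposition \ref{prop:4.1}, now along the $\lambda^{a}_{+}$-characteristics issuing from $\widetilde{\Gamma}_{cd}$ into $\widetilde{\Omega}^{(a)}\cap\Omega_{III}$ (and, symmetrically, the $\lambda^{b}_{-}$-characteristics into $\widetilde{\Omega}^{(b)}\cap\Omega_{III}$). Explicitly: parametrise these characteristics by their foot point on $\widetilde{\Gamma}_{cd}$; establish an analogue of Lemma \ref{lem:4.2} bounding $D^{k}$, $k=1,2$, of the foot point as a function of $(\xi,\eta)$; get the $C^{0}$ bound from the constancy of the invariant along its characteristic; then differentiate $\eqref{eq:4.49}_{1}$ once and twice, integrate along characteristics, and close by Gronwall's inequality, the source terms carrying a factor $\|\delta z^{i,(n-1)}\|_{C^{2}}\le2\sigma$ that is absorbed once $\sigma$ is small. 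This yields \eqref{eq:4.59}, and \eqref{eq:4.60} follows from the symmetric computation in $\widetilde{\Omega}^{(b)}\cap\Omega_{III}$.

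I expect the main obstacle to be the second-order part of the algebraic reduction: one must check that, after inverting the $2\times2$ system, no derivative of order $\le2$ of $\alpha^{(n-1)}$, $\beta^{(n-1)}$ or of the incoming invariants enters with a large or $\sigma$-uncontrolled coefficient, so that the constants $\widetilde{C}^{a}_{2,+}$, $\widetilde{C}^{b}_{2,-}$ depend only on $\underline{\widetilde{U}},L,\gamma$. This is the step that genuinely exploits both the structure of the Rankine--Hugoniot conditions (continuity of $\arctan\widetilde{w}$ and of $\widetilde p$ across $\widetilde{\Gamma}_{cd}$) and the a priori smallness of the iterate and of the Bernoulli/entropy data; by contrast, the interior propagation in the last step adds nothing beyond the cornered-domain characteristic analysis already carried out in \S4.1--\S4.2.
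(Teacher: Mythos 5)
Your proposal is correct and follows essentially the same route as the paper: the paper's formulas \eqref{eq:4.61}--\eqref{eq:4.62} are exactly your inversion of the $2\times2$ system on $\widetilde{\Gamma}_{cd}$ with determinant $\alpha^{(n-1)}+\beta^{(n-1)}$ (positive by \eqref{eq:3.48b}), with $c(\xi)$ absorbed into the entropy/Bernoulli deviations, and the interior propagation is done, as you describe, by integrating the (once and twice) differentiated transport equation along the $\lambda_{+}^{a}$- (resp.\ $\lambda_{-}^{b}$-) characteristics issuing from $\eta=0$, using Lemma \ref{lem:4.3} (your analogue of Lemma \ref{lem:4.2}) for the foot-point map and closing with Gronwall for small $\sigma$. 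The only cosmetic difference is that the paper never recovers normal derivatives of the data on $\widetilde{\Gamma}_{cd}$ from the PDE; it only needs tangential derivatives of the trace formulas, entering through $D\xi_{0}^{a}$ and $D^{2}\xi_{0}^{a}$ by the chain rule, which is equivalent to your step.
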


\par Before giving the proof of Proposition \ref{prop:4.3},   let us introduce the characteristics issuing from $\eta=0$, that is,  $\xi=\psi^{a,(n)}_{+}(\eta,\xi^{a}_{0})$  and $\xi=\psi^{b,(n)}_{-}(\eta,\xi^{b}_{0})$
are the characteristic curves   passing through   the points $(0,\xi_{0})$ defined by
\begin{eqnarray}\label{eq:4.51}
\begin{aligned}
\left\{
\begin{array}{llll}
\frac{d\psi^{a,(n)}_{+}}{d\eta}=\frac{1}{\tilde{\lambda}^{a,(n-1)}_{+}(\eta,\psi^{a,(n)}_{+})},\\
\psi^{a,(n)}_{+}(0,\xi^{a}_{0})=\xi^{a}_{0},
\end{array}
\right.
\end{aligned}
\end{eqnarray}
and
\begin{eqnarray}\label{eq:4.52}
\begin{aligned}
\left\{
\begin{array}{llll}
\frac{d\psi^{b,(n)}_{-}}{d\eta}=\frac{1}{\tilde{\lambda}^{b,(n-1)}_{-}(\eta,\psi^{b,(n)}_{-})},\\
\psi^{b,(n)}_{-}(0,\xi^{b}_{0})=\xi^{b}_{0},
\end{array}
\right.
\end{aligned}
\end{eqnarray}
where $\xi^{a}_{0}, \xi^{b}_{0}\in [0, L]$.
Similarly, we can regard $\xi_0^a$ and $\xi_0^b$ as the functions of $(\xi,\eta)$.
Then similarly to Lemma \ref{lem:4.2}, we also have following estimates for $\xi^{a}_{0}$ and $\xi^{b}_{0}$.
\begin{lemma}\label{lem:4.3}
For any $\delta z^{i,(n-1)}\in \mathcal{M}_{2\sigma}$ $(i=a, b)$, there exist constants $C^{i}_{2,1}$ and $C^{i}_{2,2}$
depending only on $\widetilde{\underline{U}}$, $L$ and $\sigma$ such that
\begin{eqnarray}\label{eq:4.54}
\begin{aligned}
\|D^{k}\xi^{i}_{0}\|_{C^{0}(\widetilde{\Omega}^{(a)}\cup\widetilde{\Omega}^{(b)})}\leq C^{i}_{2,k},\ \ \   k=1,2,
\end{aligned}
\end{eqnarray}
for $i=a, b$.
\end{lemma}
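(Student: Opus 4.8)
The plan is to repeat verbatim the strategy used to prove Lemma \ref{lem:4.2}, only now applied to the characteristics issuing from the contact discontinuity $\{\eta = 0\}$ rather than from the nozzle walls. First I would fix a point $\xi^a_0 \in [0,L]$ and integrate the defining ODE \eqref{eq:4.51} to obtain the integral representation
\begin{eqnarray*}
\xi = \xi^a_0 + \int^{\eta}_{0}\frac{1}{\tilde{\lambda}^{a,(n-1)}_{+}(\psi^{a,(n)}_{+}(\tau,\xi^a_0),\tau)}d\tau,
\end{eqnarray*}
and similarly for $\xi^b_0$ using \eqref{eq:4.52}. Since $\delta z^{i,(n-1)} \in \mathcal{M}_{2\sigma}$, the characteristic speeds $\tilde{\lambda}^{i,(n-1)}_{\pm}$ are $C^2$-close to the (nonzero) background values $\underline{\lambda}^{i}_{\pm}$, so $1/\tilde{\lambda}^{i,(n-1)}_{\pm}$ and its derivatives up to second order are bounded by constants depending only on $\widetilde{\underline{U}}$, $L$ and $\sigma$; this is exactly the input needed below.

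Next I would differentiate the integral relation with respect to $\xi$ and $\eta$. Treating $\xi^a_0$ as an implicit function of $(\xi,\eta)$, the first differentiation gives
\begin{eqnarray*}
\frac{\partial \xi^a_0}{\partial \xi}=\frac{1}{1+\int^{\eta}_{0}\partial_{\xi}\big(\tfrac{1}{\tilde{\lambda}^{a,(n-1)}_{+}}\big)
e^{\int^{\tau}_{0}\partial_{\xi}(\frac{1}{\tilde{\lambda}^{a,(n-1)}_{+}})ds}\,d\tau},
\end{eqnarray*}
and an analogous formula for $\partial_\eta \xi^a_0$, exactly as in the proof of Lemma \ref{lem:4.2}. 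Because the denominator is $1 + O(\sigma)$ for $\sigma$ small, these are bounded by a constant $C^a_{2,1}$, which gives \eqref{eq:4.54} for $k=1$. Differentiating once more produces formulas for $\partial^2_{\xi\xi}\xi^a_0$, $\partial^2_{\xi\eta}\xi^a_0$, $\partial^2_{\eta\eta}\xi^a_0$ whose numerators involve $D^2$ of $1/\tilde{\lambda}^{a,(n-1)}_{+}$ and the already-bounded first derivatives of $\xi^a_0$, and whose denominators are again $(1+O(\sigma))^2$; this yields the bound $C^a_{2,2}$ and hence \eqref{eq:4.54} for $k=2$. The same computation with $+$ replaced by $-$ and $m^{(a)}$-data replaced by $m^{(b)}$-data handles $\xi^b_0$ in $\widetilde{\Omega}^{(b)}$.

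There is essentially no obstacle here: the argument is a routine repackaging of Lemma \ref{lem:4.2}, the only point worth noting being that since $\lambda_0 = 0$ identically (cf.\ the eigenvalue computation after \eqref{eq:3.24}), the characteristic speeds $\lambda_{\pm}$ stay uniformly bounded away from $0$ near the background state, so $1/\tilde{\lambda}^{i,(n-1)}_{\pm}$ is well-defined and smooth on $\mathcal{M}_{2\sigma}$ for $\sigma$ small. Thus the constants $C^i_{2,1}, C^i_{2,2}$ depend only on $\widetilde{\underline{U}}$, $L$ and $\sigma$, and one may absorb the $\sigma$-dependence once $\sigma$ is taken below a threshold depending only on $\widetilde{\underline{U}}$, $L$ and $\gamma$.
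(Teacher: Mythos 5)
Your proposal is correct and follows exactly the route the paper intends: the paper omits the proof of Lemma \ref{lem:4.3}, stating it is the same as Lemma \ref{lem:4.2}, and your argument is precisely that adaptation (integral representation of the characteristics from \eqref{eq:4.51}--\eqref{eq:4.52} issuing from $\eta=0$, implicit differentiation giving denominators of the form $1+O(\sigma)$, and bounds via the $C^{2}$-closeness of $\tilde{\lambda}^{i,(n-1)}_{\pm}$ to the nonzero background eigenvalues). The only trivial slip is the phrase about replacing ``$m^{(a)}$-data by $m^{(b)}$-data'' for $\xi^{b}_{0}$: both families of characteristics here start from $\eta=0$, so only the sign of the eigenvalue and the domain change, which does not affect the argument.
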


We omit the proof of this lemma since it is similar to that in Lemma \ref{lem:4.2}.. 
Now we   prove Proposition \ref{prop:4.3}.
\begin{proof}[Proof of Proposition \ref{prop:4.3}]
As before we only consider the estimate of $\delta z^{a,(n)}_{-}$ in $\widetilde{\Omega}^{(a)}\cap\Omega_{III}$, since the estimate
of $\delta z^{b,(n)}_{+}$ in $\widetilde{\Omega}^{(b)}\cap\Omega_{III}$ is similar.


\par First of all, from the boundary conditions $\eqref{eq:4.49}_{3}$ and $\eqref{eq:4.49}_4$ on $\widetilde{\Gamma}_{cd}$, we have
\begin{eqnarray}\label{eq:4.61}
\begin{aligned}
&\delta z^{a, (n)}_{-}=\gamma^{(n-1)}_{1}\delta z^{a, (n)}_{+}
+\gamma^{(n-1)}_{3}\delta z^{b, (n)}_{-}+\gamma^{(n-1)}_{4}c,\\
&\delta z^{b, (n)}_{+}=\gamma^{(n-1)}_{2}\delta z^{a, (n)}_{+}
-\gamma^{(n-1)}_{1}\delta z^{b, (n)}_{-}+\gamma^{(n-1)}_{4}c,
\end{aligned}
\end{eqnarray}
where
\begin{eqnarray}\label{eq:4.62}
\begin{aligned}
&\gamma^{(n-1)}_{1}=\frac{\alpha^{(n-1)}-\beta^{(n-1)}}{\alpha^{(n-1)}+\beta^{(n-1)}}, \ \ \
\gamma^{(n-1)}_{2}=\frac{2\alpha^{(n-1)}}{\alpha^{(n-1)}+\beta^{(n-1)}},\\
&\gamma^{(n-1)}_{3}=\frac{2\beta^{(n-1)}}{\alpha^{(n-1)}+\beta^{(n-1)}},\  \  \
\gamma^{(n-1)}_{4}=\frac{1}{\alpha^{(n-1)}+\beta^{(n-1)}}.
\end{aligned}
\end{eqnarray}
As in the proof of Proposition \ref{prop:4.2}, we also rewrite the equation for $\delta z^{a,(n)}_{-}$ as
\begin{eqnarray}\label{eq:4.63}
\begin{aligned}
\partial_{\eta}\delta z^{a,(n)}_{-}+\frac{1}{\tilde{\lambda}^{a,(n-1)}_{+}}\partial_{\xi}\delta z^{a,(n)}_{-}=0.
\end{aligned}
\end{eqnarray}
Then along the characteristic $\xi=\psi^{a,(n)}_{+}(\eta,\xi^{a}_{0})$, 
we have
\begin{eqnarray}\label{eq:4.64}
\begin{aligned}
\|\delta z^{a,(n)}_{-}\|_{C^{0}(\widetilde{\Omega}^{(a)}\cap\Omega_{III})}&=
\|\delta z^{a,(n)}_{-}\|_{C^{0}(\Omega_{III}\cap\{\eta=0\})}\\
&= \|\gamma^{(n-1)}_{1}\delta z^{a, (n)}_{+}
+\gamma^{(n-1)}_{3}\delta z^{b, (n)}_{-}+\gamma^{(n-1)}_{4}c\|_{C^{0}(\widetilde{\Omega}^{(a)}\cap\Omega_{III})}\\
&\leq \mathcal{C}\Big(\|\delta z^{a, (n)}_{+}\|_{C^{0}(\widetilde{\Omega}^{(a)}\cap\Omega_{III})}
+\|\delta z^{b, (n)}_{-}\|_{C^{0}(\widetilde{\Omega}^{(a)}\cap\Omega_{III})}\\
&\ \ \ +\sum_{i=a,b}\big(\|\widetilde{S}^{i}_{0}-\underline{S}^{i}\|_{C^{0}( \Sigma_{i})}
+\|\widetilde{B}^{i}_{0}-\underline{B}^{i}\|_{C^{0}( \Sigma_{i})}\big)\Big),
\end{aligned}
\end{eqnarray}
where we have used the boundary condition \eqref{eq:4.61} on $\widetilde{\Gamma}_{cd}$
and the constant $\mathcal{C}$ depends only on $\widetilde{\underline{U}}$, $L$ and $\gamma$.

 Next, let us consider the estimate of $\|D\delta z^{a,(n)}_{-}\|_{C^{0}(\widetilde{\Omega}^{(a)}\cap\Omega_{III})}$.
Taking the derivatives on equation \eqref{eq:4.63} with respect to $(\xi,\eta)$ and then integrating the resulting equations along the characteristic $\xi=\psi^{a,(n)}_{+}(\eta,\xi^{a}_{0})$ determined by \eqref{eq:4.51},
we obtain
\begin{eqnarray*}
\begin{aligned}
&\partial_{\xi} \delta z^{a,(n)}_{-}=\partial_{\xi}\delta z^{a,(n)}_{-}({\xi^{a}_{0}},0)
-\int^{\eta}_{0}\partial_{\xi}\big(\frac{1}{\tilde{\lambda}^{a,(n-1)}_{+}}\big)
\partial_{\tau}\delta z^{a,(n)}_{-}(\tau,\cdot)d\tau,\\
&\partial_{\eta} \delta z^{a,(n)}_{-}=\partial_{\eta}\delta z^{a,(n)}_{-}({\xi^{a}_{0}}, 0)
-\int^{\eta}_{0}\partial_{\tau}\big(\frac{1}{\tilde{\lambda}^{a,(n-1)}_{+}}\big)
\partial_{\tau}\delta z^{a,(n)}_{-}(\tau,\cdot)d\tau,
\end{aligned}
\end{eqnarray*}
where
\begin{eqnarray*}
\begin{aligned}
\partial_{\xi}\delta z^{a,(n)}_{-}({\xi^{a}_{0}}, 0)&=\frac{\partial {\xi^{a}_{0}}}{\partial \xi}
\partial_{{\xi^{a}_{0}}}\delta z^{a,(n)}_{-}({\xi^{a}_{0}},0),
\quad
\partial_{\eta}\delta z^{a,(n)}_{-}({\xi^{a}_{0}}, 0)&=\frac{\partial{\xi^{a}_{0}}}{\partial \eta}
\partial_{{\xi^{a}_{0}}}\delta z^{a,(n)}_{-}({\xi^{a}_{0}},0).
\end{aligned}
\end{eqnarray*}
Notice that on the contact discontinuity $\eta=0$, by \eqref{eq:4.61},
\begin{eqnarray*}
\begin{aligned}
\partial_{\xi^{a}_{0}}\delta z^{a,(n)}_{-}({\xi^{a}_{0}}, 0)&=\partial_{\xi^{a}_{0}}\gamma^{(n-1)}_{1}\delta z^{a,(n)}_{+}
+\gamma^{(n-1)}_{1}\partial_{\xi^{a}_{0}}\delta z^{a,(n)}_{+}
+\partial_{\xi^{a}_{0}}\gamma^{(n-1)}_{3}\delta z^{b,(n)}_{-}\\
&\ \ \ \ +\gamma^{(n-1)}_{3}\partial_{\xi^{a}_{0}}\delta z^{b,(n)}_{-}+\partial_{\xi^{a}_{0}}\gamma^{(n-1)}_{4}c
+\gamma^{(n-1)}_{4}\partial_{\xi^{a}_{0}}c.
\end{aligned}
\end{eqnarray*}
Then
\begin{eqnarray*}
\begin{aligned}
|D\delta z^{a,(n)}_{-}({\xi^{a}_{0}},0)|
&\leq 
\|D{\xi^{a}_{0}}\|_{C^{0}(\widetilde{\Omega}_a\cap\Omega_{III})}
| \partial_{{\xi^{a}_{0}}}\delta z^{a,(n)}_{-}({\xi^{a}_{0}}, 0)|\\
& \leq \mathcal{C}\Big(\|\delta z^{a, (n)}_{+}\|_{C^{1}((\Omega_{III}\cap\{\eta=0\})}
+\|\delta z^{b, (n)}_{-}\|_{C^{1}((\Omega_{III}\cap\{\eta=0\})}\\
&\ \ \ +\sum_{i=a,b}\big(\|\widetilde{S}^{i}_{0}-\underline{S}^{i}\|_{C^{1}( \Sigma_{i})}
+\|\widetilde{B}^{i}_{0}-\underline{B}^{i}\|_{C^{1}( \Sigma_{i})}\big)\Big),
\end{aligned}
\end{eqnarray*}
where constant $\mathcal{C}$ only depends on $\widetilde{\underline{U}}$, $L$ and $\gamma$.
Following \emph{Step 2} in the proof of Proposition \ref{prop:4.1} and applying the Gronwall type inequality, we can choose $\sigma_1'$ such that for any $\sigma\in(0,\sigma_1')$, we have
\begin{eqnarray}\label{eq:4.68}
\begin{aligned}
\|D\delta z^{a,(n)}_{-}\|_{C^{0}(\widetilde{\Omega}^{(a)}\cap\Omega_{III})} &\leq \mathcal{C}\|D\delta z^{a,(n)}_{-}\|_{C^{0}(\Omega_{III})\cap\{\eta=0\}} \\
& \leq \mathcal{C}\Big(\|\delta z^{a, (n)}_{+}\|_{C^{1}((\Omega_{III}\cap\{\eta=0\})}
+\|\delta z^{b, (n)}_{-}\|_{C^{1}((\Omega_{III}\cap\{\eta=0\})}\\
&\ \ \ +\sum_{i=a,b}\big(\|\widetilde{S}^{i}_{0}-\underline{S}^{i}\|_{C^{1}( \Sigma_{i})}
+\|\widetilde{B}^{i}_{0}-\underline{B}^{i}\|_{C^{1}( \Sigma_{i})}\big)\Big),
\end{aligned}
\end{eqnarray}
where constant $\mathcal{C}$ only depends on $\widetilde{\underline{U}}$, $L$ and $\gamma$.

 Finally, let us consider the estimate of $\|D^{2}\delta z^{a,(n)}_{-}\|_{C^{0}(\widetilde{\Omega}^{(a)}\cap\Omega_{III})}$.
Taking the derivatives on \eqref{eq:4.61} with respect to
$(\xi, \eta)$ twice and then integrating the resulting equations along the characteristic
$\xi=\psi^{a,(n)}_{+}(\eta,{\xi^{a}_{0}})$ from $0$ to $\eta$, we obtain
\begin{eqnarray*}
\begin{aligned}
\partial^2_{\xi\xi}\delta z^{a,(n)}_{-}&=\partial^2_{\xi\xi}\delta z^{a,(n)}_{-}(\xi^{a}_{0},0)
-2\int^{\eta}_{0}\partial_{\xi}\big(\frac{1}{\tilde{\lambda}^{a,(n-1)}_{+}}\big)
\partial^{2}_{\xi\xi} \delta z^{a,(n)}_{-}d\tau\\
&\ \ \ \
-\int^{\eta}_{0}\partial^{2}_{\xi\xi} \big(\frac{1}{\tilde{\lambda}^{a,(n-1)}_{+}}\big)
\partial_{\xi} \delta z^{a,(n)}_{-}d\tau,\\
\partial^2_{\xi\eta}\delta z^{a,(n)}_{-}&=\partial^2_{\xi\eta}\delta z^{a,(n)}_{-}(\xi^{a}_{0},0)
-\int^{\eta}_{0}\Big(\partial_{\tau}\big(\frac{1}{\tilde{\lambda}^{a,(n-1)}_{+}}\big)
+\partial_{\xi}\big(\frac{1}{\tilde{\lambda}^{a,(n-1)}_{+}}\big)\Big)\partial^{2}_{\xi\tau} \delta z^{a,(n)}_{-}d\tau\\
&\ \ \ \ -\int^{\eta}_{0}\partial^{2}_{\xi\tau} \big(\frac{1}{\tilde{\lambda}^{a,(n-1)}_{+}}\big)
\partial_{\xi} \delta z^{a,(n)}_{-}d\tau,
\end{aligned}
\end{eqnarray*}
and
\begin{eqnarray*}
\begin{aligned}
\partial^2_{\eta\eta}\delta z^{a,(n)}_{-}&=\partial^2_{\eta\eta}\delta z^{a,(n)}_{+}(\xi^{a}_{0},0)
-2\int^{\eta}_{0}\partial_{\tau}\big(\frac{1}{\tilde{\lambda}^{a,(n-1)}_{+}}\big)
\partial^{2}_{\xi\tau} \delta z^{a,(n)}_{-}d\tau
-\int^{\eta}_{0}\partial^{2}_{\tau\tau} \frac{1}{\tilde{\lambda}^{a,(n-1)}_{+}}
\partial_{\xi} \delta z^{a,(n)}_{-}d\tau.
\end{aligned}
\end{eqnarray*}
For the terms $D^{2}\delta z^{a,(n)}_{-}(\xi^{a}_{0}, 0)$ in above equations, we first have
\begin{eqnarray*}
\begin{aligned}
\partial^{2}_{\xi\xi}\delta z^{a,(n)}_{-}({\xi^{a}_{0}}, 0)
&=\frac{\partial^{2} \xi^{a}_{0}}{\partial \xi^{2}}\partial_{\xi^{a}_{0}}\delta z^{a,(n)}_{-}(\xi^{a}_{0},0)
+ \Big(\frac{\partial \xi^{a}_{0}}{\partial \xi}\Big)^{2}
\partial^{2}_{\xi^{a}_{0}\xi^{a}_{0}}\delta z^{a,(n)}_{-}(\xi^{a}_{0},0),\\
\partial^{2}_{\xi\eta}\delta z^{a,(n)}_{-}(\xi^{a}_{0}, 0)
&=\frac{\partial^{2} \xi^{a}_{0}}{\partial \xi\partial \eta}\partial_{\xi^{a}_{0}}\delta z^{a,(n)}_{-}(\xi^{a}_{0},0)
 + \frac{\partial \xi^{a}_{0}}{\partial \xi}\frac{\partial \xi^{a}_{0}}{\partial \eta}
\partial^{2}_{\xi^{a}_{0}\xi^{a}_{0}}\delta z^{a,(n)}_{-}(\xi^{a}_{0},0),\\
\partial^{2}_{\eta\eta}\delta z^{a,(n)}_{-}(\xi^{a}_{0}, 0)
&=\frac{\partial^{2} \xi^{a}_{0}}{\partial \eta^{2}}\partial_{\xi^{a}_{0}}\delta z^{a,(n)}_{-}(\xi^{a}_{0},0)
+ \Big(\frac{\partial \xi^{a}_{0}}{\partial \eta}\Big)^{2}
\partial^{2}_{\xi^{a}_{0}\xi^{a}_{0}}\delta z^{a,(n)}_{-}(\xi^{a}_{0},0).
\end{aligned}
\end{eqnarray*}
For the term $\partial^{2}_{\xi^{a}_{0}\xi^{a}_{0}}\delta z^{a,(n)}_{-}(\xi^{a}_{0},0)$, taking the  derivatives twice on \eqref{eq:4.61} gives
\begin{eqnarray*}
\begin{aligned}
\partial^{2}_{\xi^{a}_{0}\xi^{a}_{0}}\delta z^{a,(n)}_{-}(\xi^{a}_{0},0)
&=\partial_{\xi^{a}_{0}}\Big(\partial_{\xi^{a}_{0}}\gamma^{(n-1)}_{1}\delta z^{a,(n)}_{+}
+\gamma^{(n-1)}_{1}\partial_{\xi^{a}_{0}}\delta z^{a,(n)}_{+}
+\partial_{\xi^{a}_{0}}\gamma^{(n-1)}_{3}\delta z^{b,(n)}_{-}\\
&\ \ \ \ +\gamma^{(n-1)}_{3}\partial_{\xi^{a}_{0}}\delta z^{b,(n)}_{-}
+\partial_{\xi^{a}_{0}}\gamma^{(n-1)}_{4}c
+\gamma^{(n-1)}_{4}\partial_{\xi^{a}_{0}}c\Big)\\
&=\partial^{2}_{\xi^{a}_{0}\xi^{a}_{0}}\gamma^{(n-1)}_{1}\delta z^{a,(n)}_{+}
+2\partial_{\xi^{a}_{0}}\gamma^{(n-1)}_{1}\partial_{\xi^{a}_{0}}\delta z^{a,(n)}_{+}
+\gamma^{(n-1)}_{1}\partial^{2}_{\xi^{a}_{0}\xi^{a}_{0}}\delta z^{a,(n)}_{+}\\
&\ \ \ \ + \partial^{2}_{\xi^{a}_{0}\xi^{a}_{0}}\gamma^{(n-1)}_{3}\delta z^{b,(n)}_{-}
+2\partial_{\xi^{a}_{0}}\gamma^{(n-1)}_{3}\partial_{\xi^{a}_{0}}\delta z^{b,(n)}_{-}
+\gamma^{(n-1)}_{3}\partial^{2}_{\xi^{a}_{0}\xi^{a}_{0}}\partial_{\xi^{a}_{0}}\delta z^{b,(n)}_{-}\\
&\ \ \ \ + \partial^{2}_{\xi^{a}_{0}\xi^{a}_{0}}\gamma^{(n-1)}_{4}c
+2\partial_{\xi^{a}_{0}}\gamma^{(n-1)}_{4}\partial_{\xi^{a}_{0}}c
+\gamma^{(n-1)}_{4}\partial^{2}_{\xi^{a}_{0}\xi^{a}_{0}}c.
\end{aligned}
\end{eqnarray*}
Thus,
\begin{eqnarray*}
\begin{aligned}
|\partial^{2}_{\xi^{a}_{0}\xi^{a}_{0}}\delta z^{a,(n)}_{-}(\xi_0^a,0)|
&\leq \mathcal{C}\Big(\|\delta z^{a, (n)}_{+}\|_{C^{2}(\Omega_{III}\cap\{\eta=0\})}
+\|\delta z^{b, (n)}_{-}\|_{C^{2}(\Omega_{III}\cap\{\eta=0\})}\\
&\ \ \ +\sum_{i=a,b}\big(\|\widetilde{S}^{i}_{0}-\underline{S}^{i}\|_{C^{2}( \Sigma_{i})}
+\|\widetilde{B}^{i}_{0}-\underline{B}^{i}\|_{C^{2}( \Sigma_{i})}\big)\Big).
\end{aligned}
\end{eqnarray*}
Then,  by Lemma \ref{lem:4.3}, on $\Omega_{III}\cap\{\eta=0\}$, one has
\begin{eqnarray}\label{eq:4.69}
\begin{aligned}
|D^{2}\delta z^{a,(n)}_{-}(\xi_0^{a},0)|
& \leq \|D^{2}\xi^{a}_{0}\|_{C^{0}(\widetilde{\Omega}^{(a)})}
\|\partial_{\xi^{a}_{0}}\delta z^{a,(n)}_{-}(\cdot,0)\|_{C^{0}(\Omega_{III}\cap\{\eta=0\})}\\
&\ \ \ +\|(D\xi^{a}_{0})^{2}\|_{C^{0}(\widetilde{\Omega}^{(a)})}\|\partial^{2}_{\xi^{a}_{0}\xi^{a}_{0}}\delta z^{a,(n)}_{-}(\cdot,0)\|_{C^{0}(\Omega_{III}\cap\{\eta=0\})}\\
&\leq \mathcal{C}\Big(\|\delta z^{a, (n)}_{+}\|_{C^{2}(\Omega_{III}\cap\{\eta=0\})}
+\|\delta z^{b, (n)}_{-}\|_{C^{2}(\Omega_{III}\cap\{\eta=0\})}\\
&\ \ \ +\sum_{i=a,b}\big(\|\widetilde{S}^{i}_{0}-\underline{S}^{i}\|_{C^{2}( \Sigma_{i})}
+\|\widetilde{B}^{i}_{0}-\underline{B}^{i}\|_{C^{2}( \Sigma_{i})}\big)\Big),
\end{aligned}
\end{eqnarray}
where the constant $\mathcal{C}$ depends only on $\widetilde{\underline{U}}$, $L$ and $\gamma$.

\par With   \eqref{eq:4.69} and following \emph{Step 3} in the proof of Proposition \ref{prop:4.1}
 and further applying the Gronwall inequality,  
we can choose  the constant $\sigma''_{1}$ sufficiently small such that 
for any $\sigma \in (0, \sigma''_{1})$, we have
\begin{eqnarray}\label{eq:4.70}
\begin{aligned}
\|D^{2}\delta z^{a,(n)}_{+}\|_{C^{0}(\widetilde{\Omega}^{(a)}\cap\Omega_{III})}
&\leq  2\mathcal{C}\Big(\|\delta z^{a, (n)}_{+}\|_{C^{2}(\Omega_{III}\cap\{\eta=0\})}
+\|\delta z^{b, (n)}_{-}\|_{C^{2}(\Omega_{III}\cap\{\eta=0\})}\\
&\ \ \ +\sum_{i=a,b}\big(\|\widetilde{S}^{i}_{0}-\underline{S}^{i}\|_{C^{2}( \Sigma_{i})}
+\|\widetilde{B}^{i}_{0}-\underline{B}^{i}\|_{C^{2}( \Sigma_{i})}\big)\Big).
\end{aligned}
\end{eqnarray}
From the estimates \eqref{eq:4.64}, \eqref{eq:4.68} and \eqref{eq:4.70} together, there
exist constants $\widetilde{C}^{a}_{2,+}>0$ and $\sigma_{2}=\min\{\sigma'_{1},\sigma''_{1},1\}>0$
depending only on $\widetilde{\underline{U}}$, $L$ and $\gamma$, such that for $\sigma\in (0,\sigma_{2})$,
the estimate \eqref{eq:4.59} holds. This completes the proof of the proposition.
\end{proof}

With Propositions \ref{prop:4.1}-\ref{prop:4.3}, we can prove Theorem \ref{thm:4.1}.

\begin{proof}[Proof of Theorem \ref{thm:4.1}]
We will prove inequality \eqref{eq:4.71} by the induction procedure together with Propositions \ref{prop:4.1}-\ref{prop:4.3}.

Note that by Remark \ref{remark:4.2}, $\|\delta z^{a, (n)}_{+}\|_{C^{2}(\Omega_{III}\cap\{\eta=0\})}
+\|\delta z^{b, (n)}_{-}\|_{C^{2}(\Omega_{III}\cap\{\eta=0\})}$ in \eqref{eq:4.59}--\eqref{eq:4.60} can be bounded by the terms on the right hand side of the inequality \eqref{eq:4.35}--\eqref{eq:4.36}, which can finally be bounded from Proposition \ref{prop:4.1}. Thus,
\begin{eqnarray}\label{equ:4.59a}
\begin{aligned}
&\|\delta z^{a,(n)}\|_{C^{2}( \Omega^{(a)}\cap(\Omega_I\cup\Omega_{II}\cup\Omega_{III}))}+\|\delta z^{b,(n)}\|_{C^{2}( \Omega^{(b)}\cap(\Omega_I\cup\Omega_{II}\cup\Omega_{III}))}\\
&\ \ \ \leq C^{*}_{1}\Big(\sum_{i=a,b}\big(\|z^{i}_{0}-\underline{z}^{i}\|_{C^{2}(\Sigma_{i})}
+\|\widetilde{S}^{i}_{0}-\underline{S}^{i}\|_{C^{2}( \Sigma_{i})}+\|\widetilde{B}^{i}_{0}-\underline{B}^{i}\|_{C^{2}( \Sigma_{i})}\big)\\
&\ \ \ \ \ \  
+\|g_{+}-1\|_{C^{3}([0, \xi^{a,+}_{1}])}+\|g_{-}+1\|_{C^{3}([0, \xi^{b,-}_{1}])}\Big).
\end{aligned}
\end{eqnarray}
Let $\xi_1^*;=\min\{\xi_1^{a,+},\,\xi_1^{b,-}\}$ and $\Omega_1^{(i)}:=\widetilde{\Omega}^{(i)}\cap\{\xi\leq\xi^*_1\}$
for $i=a$ or $b$. In $\Omega_{IV}\cap\Omega_1^{(i)}$, we can repeat the proof of Proposition \ref{prop:4.1}
to show that inequality \eqref{equ:4.59a} also holds. Then,  there exist constants $C^{*}_{1}$ and
$\sigma^{*}_{1}$ which only depend on $\widetilde{\underline{U}},\ L$ and $\gamma$, such that for $\sigma\in (0,\sigma^{*}_{1})$,
we have
\begin{eqnarray*}
\begin{aligned}
&\|\delta z^{a,(n)}\|_{C^{2}( \Omega^{(a)}_{1})}+\|\delta z^{b,(n)}\|_{C^{2}( \Omega^{(b)}_{1})}\\
&\ \ \ \leq C^{*}_{1}\Big(\sum_{i=a,b}\big(\|z^{i}_{0}-\underline{z}^{i}\|_{C^{2}(\Sigma_{i})}
+\|\widetilde{S}^{i}_{0}-\underline{S}^{i}\|_{C^{2}( \Sigma_{i})}\big)\\
&\ \ \ \ \ \  +\sum_{i=a,b}\|\widetilde{B}^{i}_{0}-\underline{B}^{i}\|_{C^{2}( \Sigma_{i})}
+\|g_{+}-1\|_{C^{3}([0, \xi^{*}_{1}])}+\|g_{-}+1\|_{C^{3}([0, \xi^{*}_{1}])}\Big).
\end{aligned}
\end{eqnarray*}
Let $\xi_2^{a, +}$ be the intersection point of the characteristic corresponding to $\lambda_+$ starting from
$(\xi_1^*,0)$ and the upper nozzle wall $\eta=m^{(a)}$,  $\xi_2^{b,-}$  be the intersection point of the
characteristic corresponding to $\lambda_-$ starting from $(\xi_1^*,0)$ and the lower nozzle wall
$\eta=-m^{(b)}$,  and  $\Omega_2^{(i)}:=\widetilde{\Omega}^{(i)}\cap \{\xi_1^*\leq\xi\leq \xi^*_2\}$
for $i=a$ or $b$. Regarding the line $\xi=\xi_1^*$ as the initial line $\xi=0$ and repeating the argument of the proof of
 Proposition \ref{prop:4.1}--\ref{prop:4.2} again, one has
\begin{eqnarray}\label{eq:4.72a}
\begin{aligned}
&\|\delta z^{a,(n)}\|_{C^{2}( \Omega^{(a)}_{2})}+\|\delta z^{b,(n)}\|_{C^{2}( \Omega^{(b)}_{2})}\\
&\ \ \ \leq C_{2}\Big(\sum_{i=a,b}\big(\|z^{i}_{0}-\underline{z}^{i}\|_{C^{2}(\Omega^{(i)}\cap\{\xi=\xi_1^*\})}
+\|\widetilde{S}^{i}_{0}-\underline{S}^{i}\|_{C^{2}( \Omega^{(i)}\cap\{\xi=\xi_1^*\})}\big)\\
&\ \ \ \ \ \  +\sum_{i=a,b}\|\widetilde{B}^{i}_{0}-\underline{B}^{i}\|_{C^{2}( \Omega^{(i)}\cap\{\xi=\xi_1^*\})}
+\|g_{+}-1\|_{C^{3}([\xi^{*}_{1},\xi^*_2])}+\|g_{-}+1\|_{C^{3}([\xi^{*}_{1},\xi_2^*])}\Big)\\
&\ \ \ \leq C^{*}_{2}\Big(\sum_{i=a,b}\big(\|z^{i}_{0}-\underline{z}^{i}\|_{C^{2}(\Sigma_{i})}
+\|\widetilde{S}^{i}_{0}-\underline{S}^{i}\|_{C^{2}( \Sigma_{i})}\big)\\
&\ \ \ \ \ \  +\sum_{i=a,b}\|\widetilde{B}^{i}_{0}-\underline{B}^{i}\|_{C^{2}( \Sigma_{i})}
+\|g_{+}-1\|_{C^{3}([0, \xi^{*}_{2}])}+\|g_{-}+1\|_{C^{3}([0, \xi^{*}_{2}])}\Big).
\end{aligned}
\end{eqnarray}
Then, we can repeat this procedure for $\ell$ times with $\ell=[\frac{L}{\xi^{*}_{1}}]+1$.
Define $\xi_\ell^*$ and $\Omega_\ell^{(i)}$ for $i=a$, or $b$.
Obviously $\widetilde{\Omega}^{(i)}=\cup_{1\leq k\leq \ell}\Omega_k^{(i)}$.
Summing all the estimates \eqref{eq:4.72a} together for $k=1$, $...$, $\ell$, we finally obtain \eqref{eq:4.71}.
 This completes the proof of the theorem.
\end{proof}

\section{Convergence of the Approximate Solution and Existence of Problem $(\widetilde{\mathbf{P}})$}

In this section, we shall prove that the mapping $\mathcal{T}$ defined by \eqref{eq:3.51} is a contraction map,
so that the   sequence of solutions obtained in \S3.3 converges to a limit function which is
actually the solution to the problem $(\widetilde{\mathbf{P}})$.

\par First we will show the following proposition.

\begin{proposition}\label{prop:5.1}
Under the assumptions of Theorem \ref{thm:3.2}, there exists a constant $\widetilde{\sigma}_{0}>0$ depending only on
$\underline{\widetilde{U}}$,\ $L$ and $\gamma$ such that for any $\sigma \in(0,\widetilde{\sigma}_{0})$,
the sequence $\big\{(z^{a,(n)}, z^{b,(n)})\big\}^{\infty}_{n=1}$ constructed
by the iteration scheme $(\mathbf{\widetilde{P}_{n}})$ is well defined in $\mathcal{M}_{2\sigma}$.
Moreover, the sequence $\big\{(z^{a,(n)}, z^{b,(n)})\big\}^{\infty}_{n=1}$
is convergent in $C^{1}\big(\widetilde{\Omega}^{(a)}\big)\times C^{1}\big(\widetilde{\Omega}^{(b)}\big)$
and its limit function is the solution of the problem $(\widetilde{\mathbf{P}})$.
\end{proposition}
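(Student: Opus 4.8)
## Proof Proposal

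The plan is to verify the two properties of $\mathcal{T}$ announced at the end of Section 3: that $\mathcal{T}$ maps $\mathcal{M}_{2\sigma}$ to itself (well-definedness) and that $\mathcal{T}$ is a contraction in the $C^1$-norm; convergence and the identification of the limit as the solution of $(\widetilde{\mathbf{P}})$ will then follow from the Banach fixed point theorem. First I would address well-definedness. Given $(z^{a,(n-1)},z^{b,(n-1)})\in\mathcal{M}_{2\sigma}$, the linearized problem $(\mathbf{\widetilde{P}_{n}})$ has frozen, smooth, strictly hyperbolic coefficients $\tilde{\lambda}^{i,(n-1)}_{\pm}$, so the characteristic method used throughout Section 4 produces a unique $C^2$ solution $(\delta z^{a,(n)},\delta z^{b,(n)})$ region by region ($\Omega_I$ through $\Omega_{IV}$, then marching in $\xi$). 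Theorem \ref{thm:4.1} gives the quantitative bound
\begin{eqnarray*}
\|\delta z^{a,(n)}\|_{C^{2}(\widetilde{\Omega}^{(a)})}+\|\delta z^{b,(n)}\|_{C^{2}(\widetilde{\Omega}^{(b)})}\leq C^{*}\tilde{\varepsilon}_{1},
\end{eqnarray*}
so choosing $\tilde{\varepsilon}_{1}$ small enough that $C^{*}\tilde{\varepsilon}_{1}\leq 2\sigma$ (and $\sigma<\sigma^{*}_{0}$) forces $(z^{a,(n)},z^{b,(n)})\in\mathcal{M}_{2\sigma}$. One must also check that the frozen eigenvalues keep the flow supersonic and that the Lagrangian transform stays invertible, but this is immediate for $\sigma$ small since all quantities stay within $O(\sigma)$ of the background state, where $\underline{u}^{(i)}-\underline{c}^{(i)}>\delta_0$. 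Hence $\mathcal{T}$ is well-defined on $\mathcal{M}_{2\sigma}$.

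Next I would prove the contraction estimate. Take two iterates $(z^{a,(n-1)},z^{b,(n-1)})$ and $(\hat z^{a,(n-1)},\hat z^{b,(n-1)})$ in $\mathcal{M}_{2\sigma}$ with images $(z^{a,(n)},z^{b,(n)})$ and $(\hat z^{a,(n)},\hat z^{b,(n)})$, and set $Z:=z^{(n)}-\hat z^{(n)}$. Subtracting the two copies of $(\mathbf{\widetilde{P}_{n}})$, the difference $Z$ satisfies a linear hyperbolic system of exactly the same structure, but now with inhomogeneous terms: the transport equations pick up source terms of the form $\big(\tilde\lambda^{(n-1)}_{\pm}-\hat{\tilde\lambda}^{(n-1)}_{\pm}\big)\partial_\eta z^{(n)}$, the contact-discontinuity conditions pick up terms involving $\alpha^{(n-1)}-\hat\alpha^{(n-1)}$, $\beta^{(n-1)}-\hat\beta^{(n-1)}$, and the nozzle-wall conditions are unchanged. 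Since $\tilde\lambda_\pm$, $\alpha$, $\beta$ are smooth functions of $z^{(n-1)}$, the mean value theorem gives
\begin{eqnarray*}
\big|\tilde\lambda^{(n-1)}_{\pm}-\hat{\tilde\lambda}^{(n-1)}_{\pm}\big|\leq \mathcal{C}\,\big|z^{(n-1)}-\hat z^{(n-1)}\big|,
\end{eqnarray*}
and similarly for $\alpha,\beta$, with $\mathcal{C}$ depending only on $\widetilde{\underline{U}}$. Crucially, the coefficient $\partial_\eta z^{(n)}$ multiplying these differences is already bounded by $C^{*}\tilde{\varepsilon}_{1}\le 2\sigma$ from the $C^2$-estimate of Theorem \ref{thm:4.1} — this is precisely why the $C^2$ (not just $C^1$) a priori bound was needed. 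Running the same region-by-region characteristic/Gronwall argument as in Section 4, now applied to $Z$ with zero initial data at $\xi=0$, yields
\begin{eqnarray*}
\|Z^a\|_{C^{1}(\widetilde{\Omega}^{(a)})}+\|Z^b\|_{C^{1}(\widetilde{\Omega}^{(b)})}\leq \mathcal{C}\,\sigma\,\big(\|z^{a,(n-1)}-\hat z^{a,(n-1)}\|_{C^{1}(\widetilde{\Omega}^{(a)})}+\|z^{b,(n-1)}-\hat z^{b,(n-1)}\|_{C^{1}(\widetilde{\Omega}^{(b)})}\big).
\end{eqnarray*}
Shrinking $\sigma$ (hence $\widetilde\sigma_0$) so that $\mathcal{C}\sigma\leq 1/2$ makes $\mathcal{T}$ a contraction on the closure of $\mathcal{M}_{2\sigma}$ in the $C^1$-topology.

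The main obstacle I expect is the contraction step at the contact discontinuity $\widetilde{\Gamma}_{cd}$, i.e.\ handling the difference of the coupled boundary conditions $\eqref{eq:4.49}_3$--$\eqref{eq:4.49}_4$: the coefficients $\gamma^{(n-1)}_1,\dots,\gamma^{(n-1)}_4$ from \eqref{eq:4.62} depend nonlinearly on $z^{(n-1)}$ through $\alpha^{(n-1)},\beta^{(n-1)}$, and $\alpha+\beta$ must be bounded away from zero (which holds by \eqref{eq:3.48b} for $\sigma$ small), so one needs uniform control of these reflection coefficients and their variation; tracking the reflected characteristics bouncing between $\widetilde\Gamma_\pm$ and $\widetilde\Gamma_{cd}$ while keeping all constants dependent only on $\widetilde{\underline{U}},L,\gamma$ is the delicate bookkeeping. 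Once the contraction is established, $\mathcal{M}_{2\sigma}$ (closed in $C^1$) is a complete metric space, so there is a unique fixed point $(z^a,z^b)$; passing to the limit in $(\mathbf{\widetilde{P}_{n}})$ — which is legitimate because convergence is in $C^1$ and all nonlinearities are continuous — shows $(z^a,z^b)$ solves $(\widetilde{\mathbf{P}})$, and the $C^2$ bound is inherited in the limit by weak-$*$ compactness/lower semicontinuity, giving \eqref{eq:3.56}. This proves Proposition \ref{prop:5.1}, and with Remark \ref{rem:3.3} and Remark \ref{rem:3.1}, Theorems \ref{thm:3.2}, \ref{thm:3.1} and \ref{thm:2.1}.
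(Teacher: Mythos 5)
Your proposal is correct and follows essentially the same route as the paper: well-definedness of $\mathcal{T}$ from the $C^{2}$ bound of Theorem \ref{thm:4.1} with $C^{*}\tilde{\varepsilon}\leq 2\sigma$, and a $C^{1}$ contraction obtained by writing the difference problem (with source terms $(\tilde{\lambda}^{(n-1)}_{\pm}-\tilde{\lambda}^{(n)}_{\pm})\partial_{\eta}\delta z^{(n)}$ and the $\alpha,\beta$ differences on $\widetilde{\Gamma}_{cd}$) and running the Section 4 characteristic/Gronwall machinery, the $C^{2}$ bound on the iterates controlling these inhomogeneous terms. The only cosmetic difference is that you phrase the contraction for arbitrary pairs in $\mathcal{M}_{2\sigma}$ while the paper estimates consecutive differences $\Delta z^{(n)}$ to get a Cauchy sequence; the computation is the same.
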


\begin{proof}
Firstly, by Theorem \ref{thm:4.1}, we have
\begin{eqnarray*}
\begin{aligned}
&\|\delta z^{a,(n)}\|_{C^{2}(\widetilde{\Omega}^{(a)})}+\|\delta z^{b,(n)}\|_{C^{2}(\widetilde{\Omega}^{(b)})}\\
&\ \ \ \leq C^{*}\Big(\sum_{i=a,b}\big(\|z^{i}_{0}-\underline{z}^{i}\|_{C^{2}(\Sigma_{i})}
+\|\widetilde{S}^{i}_{0}-\underline{S}^{i}\|_{C^{2}( \Sigma_{i})}\big)\\
&\ \ \ \ \ \  +\sum_{i=a,b}\|\widetilde{B}^{i}_{0}-\underline{B}^{i}\|_{C^{2}( \Sigma_{i})}
+\|g_{+}-1\|_{C^{3}([0, L])}+\|g_{-}+1\|_{C^{3}([0, L])}\Big)\\
&\ \ \ \leq C^{*}\widetilde{\varepsilon},
\end{aligned}
\end{eqnarray*}
where the constant $C^*$ only depends on $\widetilde{\underline{U}},\ L$ and $\gamma$.
Thus, we can choose $\widetilde{\varepsilon}$ sufficiently small such that $C^{*}\widetilde{\varepsilon}\leq 2\sigma$,
then this implies that the iteration scheme $(\mathbf{\widetilde{P}_{n}})$ is
well defined in $\mathcal{M}_{2\sigma}$, that is, 
the map
$\mathcal{T}:\ \mathcal{M}_{2\sigma} \longmapsto \mathcal{M}_{2\sigma}$
given in \eqref{eq:3.51} is well defined. We set
$(z^{a,(n+1)}, z^{b,(n+1)}):=\mathcal{T}(z^{a,(n)}, z^{b,(n)})$.

Secondly, we will show that $\mathcal{T}$ is a contraction mapping which implies that the sequence
$\big\{(z^{a,(n)}, z^{b,(n)})\big\}^{\infty}_{n=0}$ is convergent in
$C^{1}(\widetilde{\Omega}^{(a)})\times C^{1}(\widetilde{\Omega}^{(b)})$.
To this end, let
\begin{eqnarray*}
\Delta z^{i,(n)}:=\delta z^{i, (n)}-\delta z^{i,(n-1)},\quad   i=a,\ b.
\end{eqnarray*}
Then, $(\Delta z^{a,(n)},\Delta z^{b,(n)})$ satisfies following boundary value problem,
\begin{eqnarray*}\label{eq:7.5}
\begin{aligned}
(\mathbf{\Delta P}_{n})\quad
\left\{
\begin{array}{llll}
\partial_{\xi}\Delta z^{a,(n+1)}+diag\big(\tilde{\lambda}^{a,(n)}_{+},\tilde{\lambda}^{a,(n)}_{-}\big)
\partial_{\eta}\Delta z^{a,(n+1)}\\
\ \ \  \ =diag \big(\tilde{\lambda}^{a,(n-1)}_{+}-\tilde{\lambda}^{a,(n)}_{+},
\tilde{\lambda}^{a,(n-1)}_{-}-\tilde{\lambda}^{a,(n)}_{-} \big)\partial_{\eta}\delta z^{a,(n)},
&\ \ \ in\ \widetilde{\Omega}^{(a)},\\
\partial_{\xi}\Delta z^{b,(n+1)}+diag\big(\tilde{\lambda}^{b,(n)}_{+},\tilde{\lambda}^{b,(n)}_{-}\big)
\partial_{\eta}\Delta z^{b,(n+1)}\\
\ \ \  \ =diag \big(\tilde{\lambda}^{b,(n-1)}_{+}-\tilde{\lambda}^{b,(n)}_{+},
\tilde{\lambda}^{b,(n-1)}_{-}-\tilde{\lambda}^{b,(n)}_{-} \big)\partial_{\eta}\delta z^{b,(n)},
&\ \ \ in\ \widetilde{\Omega}^{(b)},\\
\Delta z^{a,(n+1)}=0,  &\ \ \  on\ \xi=0,\\
\Delta z^{b, (n+1)}=0,   &\ \ \  on\ \xi=0, \\
\Delta z^{a,(n+1)}_{-}+\Delta z^{a,(n+1)}_{+}=0, &\ \ \ on\ \widetilde{\Gamma}_{+},\\
\Delta z^{b,(n+1)}_{-}+\Delta z^{b,(n+1)}_{+}=0, &\ \ \ on\ \widetilde{\Gamma}_{-},\\
\Delta z^{a, (n+1)}_{-}-\Delta z^{b, (n+1)}_{+}=\Delta z^{b, (n+1)}_{-}-\Delta z^{a, (n+1)}_{+},
&\ \ \ on\ \widetilde{\Gamma}_{cd},\\
\alpha^{(n)}\Delta z^{a,(n+1)}_{-}+\beta^{(n)}\Delta z^{b,(n+1)}_{+}\\
\ \  =\alpha^{(n)}\Delta z^{a,(n+1)}_{+}+\beta^{(n)}\Delta z^{b,(n+1)}_{-}\\
\ \ \ \ \ \ +(\alpha^{(n)}-\alpha^{(n-1)})(\delta z^{a,(n)}_{+}-\delta z^{a,(n)}_{-})\\
\ \ \ \ \ \ +(\beta^{(n)}-\beta^{(n-1)})(\delta z^{b,(n)}_{-}-\delta z^{b,(n)}_{+}),
&\ \ \ on\ \widetilde{\Gamma}_{cd}.
\end{array}
\right.
\end{aligned}
\end{eqnarray*}
Similarly to the proof of Theorem \ref{thm:4.1}, there exists a constant $C>0$
depending only on $\widetilde{\underline{U}}$ and $L$, such that
\begin{eqnarray*}
\begin{aligned}
&\big\|\Delta z^{a,(n+1)}\big\|_{C^{1}(\widetilde{\Omega}^{(a)})}
+\big\|\Delta z^{b,(n+1)}\big\|_{C^{1}(\widetilde{\Omega}^{(b)})}\\
&\ \ \ \ \leq C\big(\sum_{i=a,b}\big\|\delta z^{i,(n)}\big\|_{C^{2}(\widetilde{\Omega}^{(i)})}\big)
\big(\big\|\Delta z^{a,(n+1)}\big\|_{C^{1}(\widetilde{\Omega}^{(a)})}
+\big\|\Delta z^{b,(n+1)}\big\|_{C^{1}(\widetilde{\Omega}^{(b)})}\big)\\
&\ \ \ \ \leq 2C\sigma\big(\big\|\Delta z^{a,(n)}\big\|_{C^{1}(\widetilde{\Omega}^{(a)})}
+\big\|\Delta z^{b,(n)}\big\|_{C^{1}(\widetilde{\Omega}^{(b)})}\big).
\end{aligned}
\end{eqnarray*}
Take $\sigma_{0}=\frac{1}{4C}$. Then for any $0<\sigma\leq\sigma_{0}$,
we get
\begin{eqnarray}\label{equ:5.2}
\begin{aligned}
&\big\|\Delta z^{a,(n+1)}\big\|_{C^{1}(\widetilde{\Omega}^{(a)})}
+\big\|\Delta z^{b,(n+1)}\big\|_{C^{1}(\widetilde{\Omega}^{(b)})}
\leq \frac{1}{2}\big(\big\|\Delta z^{a,(n)}\big\|_{C^{1}(\widetilde{\Omega}^{(a)})}
+\big\|\Delta z^{b,(n)}\big\|_{C^{1}(\widetilde{\Omega}^{(b)})}\big).
\end{aligned}
\end{eqnarray}
\emph{i.e.},
\begin{eqnarray*}
\begin{aligned}
\big\|\mathcal{T}(\Delta z^{a,(n)}, \Delta z^{b,(n)})\big\|_{C^{1}(\widetilde{\Omega}^{(a)}\cup \widetilde{\Omega}^{(b)})}
\leq \frac{1}{2}\big\|(\Delta z^{a,(n)}, \Delta z^{b,(n)})\big\|_{C^{1}(\widetilde{\Omega}^{(a)}\cup \widetilde{\Omega}^{(b)})}.
\end{aligned}
\end{eqnarray*}
This implies that $\{(\delta z^{a,(n)}, \delta z^{b,(n)})\}^{\infty}_{n=1}$ is a Cauchy sequence
and has a limit which is denoted by $(\delta z^{a}, \delta z^{b})\in C^{1}(\widetilde{\Omega}^{(a)})
\times C^{1}(\widetilde{\Omega}^{(b)})$.

Finally, define
\begin{eqnarray*}
z^{a}:=\delta z^{a}+\underline{z}^{a},\ z^{b}:=\delta z^{b}+\underline{z}^{b}.
\end{eqnarray*}
Obviously, $(z^{a}, z^{b})\in \widetilde{\Omega}^{(a)}\times \widetilde{\Omega}^{(b)}$
is a weak solution of problem $(\widetilde{\mathbf{P}})$
in $\widetilde{\Omega}$.
This completes the proof of the proposition.
\end{proof}

 Now we are ready to prove Theorem \ref{thm:3.2}.
\begin{proof}[ Proof of Theorem \ref{thm:3.2}]
The existence is a direct consequence of Proposition \ref{prop:5.1}.
For the uniqueness, we consider two solutions $z_{1}=(z^{a}_{1}, z^{b}_{1})$ and
$z_{2}=(z^{a}_{2}, z^{b}_{2})$ which both satisfy the estimate \eqref{eq:3.56}. Define
$$
\Delta z^{(i)}:=\delta z^{(i)}_1-\delta z^{(i)}_2,\quad   i=a,\ b.
$$
Obviously, following the argument in the proof of Proposition \ref{prop:5.1}, we know that $\Delta z^{(i)}$
satisfies estimates \eqref{equ:5.2} for $C^{0}$-estimates, where $\Delta z^{i,(n+1)}$ and $\Delta z^{i,(n)}$
are replaced by
$\Delta z^{(i)}$ for $i=a$ or $b$. It means that $\Delta z^{(i)}=0$, then we conclude that $z_{1}=z_{2}$.
This completes the proof of Theorem \ref{thm:3.2}.
\end{proof}

\bigskip

\appendix

\section{Blow-up of Solution in a Semi-infinity Long Nozzle} 

In this appendix, we  give an example to show that the solution for the semi-infinity long nozzle problem
will blow-up in general. More precisely,  the irrotational flow in a semi-infinitely long flat nozzle (see Fig.\ref{fig6.1}) with compatibility conditions   will blow up if it is not a constant flow.

\begin{figure}[ht]
\begin{center}\label{fig1}
\begin{tikzpicture}[scale=1.2]
\draw [thin][->] (-5,0)to[out=30, in=-150](-4,0.1);
\draw [thin][->] (-5,-0.5)to[out=30, in=-150](-4,-0.4);
\draw [thin][->] (-5.0,-1)to[out=30, in=-150](-4,-0.9);
\draw [thin][->] (-5.0,-1.5)to[out=30, in=-150](-4,-1.4);
\draw [line width=0.06cm] (-3.7,0.6) --(2.4,0.6);
\draw [line width=0.06cm] (-3.7,-1.8) --(2.4,-1.8);
\draw [thin][dashed] (-3.7,-1.8) --(-3.7,0.6);
\node at (-5.0, 0.4) {$U_{0}(y)$};
\node at (2.9, 0.6) {$y=1$};
\node at (2.9, -1.8) {$y=0$};
\node at (-1, -0.4) {$\Omega$};
\node at (-3.7, -2.2) {$x=0$};
\end{tikzpicture}
\end{center}
\caption{Supersonic flow past a semi-infinitely long straight nozzle}\label{fig6.1}
\end{figure}
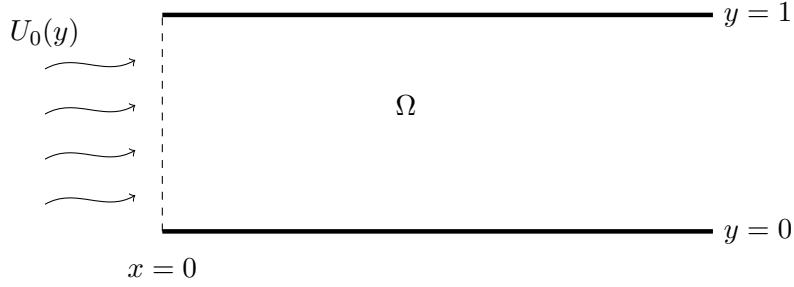

\par Consider the supersonic incoming flow past a semi-infinity long flat nozzle 
 $
\Omega:=\big\{(x,y)\in \mathbb{R}^{2}: 0<y<1,\ x>0 \big\},
$ 
whose lower and upper boundaries are denoted by $\Gamma_{a}$ and $\Gamma_{b}$, with 
$
\Gamma_{a}:=\{(x,y):y=1,\ x>0 \}$ and $\Gamma_{b}:=\{(x,y):y=0,\ x>0 \}.
$ 
Assume that the flow is irrotational satisfying
\begin{eqnarray}\label{eq:6.1}
\left\{
\begin{array}{llll}
     \partial_x(\rho u)+\partial_y(\rho v)=0, \\
    \partial_{x}v-\partial_{y}u=0,
     \end{array}
     \right.
\end{eqnarray}
together with the Bernoulli  law
\begin{eqnarray}\label{eq:6.2}
\frac{u^{2}+v^{2}}{2}+\frac{c^{2}}{\gamma-1}=\frac{1}{2}\hat{q}^{2},
\end{eqnarray}
where $(u,v), \rho$ stand for the velocity and density respectively, and $c$ represents the sonic speed which
is given by $c=\rho^{\frac{\gamma-1}{2}}$ while $\hat{q}=\sqrt{\frac{\gamma+1}{\gamma-1}}\hat{c}$ and $\hat{c}$
is the critical sonic speed.

\par The incoming flow at the inlet $x=0$ is given by
\begin{eqnarray}\label{eq:6.9}
\big(u,v, \rho\big)(0,y):=\big(u_{0},v_{0}, \rho_{0}\big)(y)\in C^{2}([0,1]),
\end{eqnarray}
and along the upper and lower walls of the straight {semi-infinitely} long nozzle the flow speeds satisfy the following boundary condition
\begin{eqnarray}\label{eq:6.10}
v(x,1)=v(x,0)=0,
\end{eqnarray}
for $x\geq0$. Then by \eqref{eq:6.2} and \eqref{eq:6.10}, {we impose the compatibility condition on the first order derivatives of the incoming flows:
\begin{eqnarray}\label{eq:6.11}
\big(\partial_yu, \partial_y\rho\big)(x,0+)=\big(\partial_y u, \partial_y \rho\big)(x,1-)=(0,0).
\end{eqnarray}}
In addition, we impose the compatibility condition on the second order derivatives of the incoming flows such that
{\begin{eqnarray}\label{eq:6.12}
\partial^2_yv_{0}(0+)=\partial^2_yv_{0}(1-)=0.
\end{eqnarray}}

\par Now, we can define the following periodic extension of the incoming flow.
\begin{eqnarray}\label{eq:6.13}
\begin{aligned}
\big(\check{u}_{0},\check{v}_{0}, \check{\rho}_{0}\big)(y)=
\left\{
\begin{array}{llll}
\big(u_{0},v_{0}, \rho_{0}\big)(y), \quad &y\in [0,1],\\
\big(u_{0},-v_{0}, \rho_{0}\big)(-y), \quad &y\in [-1,0],\\
\big(u_{0},v_{0}, \rho_{0}\big)(y-2j),\quad &y\in [2j-1,2j+1],
\end{array}
\right.
\end{aligned}
\end{eqnarray}
for $j=\pm1, \pm2,\cdots$.
Then $\big(\check{u}_{0},\check{v}_{0}, \check{\rho}_{0}\big)(y)\in C^{2}(\mathbb{R})$. 

{We will show the blow-up of solutions in general in a semi-infinitely long nozzle by a contradiction argument, if they satisfy
	\begin{eqnarray}\label{eq:6.15}
	\begin{split}
	\|(u,v,\rho)-(\underline{u}, 0, \underline{\rho})\|_{C^{1}([0,+\infty]\times[0,1])}\leq \varepsilon,
	\end{split}
	\end{eqnarray}
	where $\underline{u}, \underline{\rho}$ are the constant background state. More precisely, we suppose that the solutions to the initial-boundary value problem \eqref{eq:6.1}-\eqref{eq:6.12} exist globally. Then for any $(x,y)\in [0,+\infty]\times\mathbb{R}$, define
\begin{eqnarray}
\begin{aligned}
\big(u,v, \rho\big)(x,y)=
\left\{
\begin{array}{llll}
\big(u,v, \rho\big)(x,y), \quad &y\in [0,1],\\
\big(u,-v, \rho\big)(x,-y), \quad &y\in [-1,0],\\
\big(u,v, \rho\big)(x,y-2j),\quad &y\in [2j-1,2j+1],
\end{array}
\right.
\end{aligned}
\end{eqnarray}
for $j=\pm1, \pm2,\cdots$. Obviously, by \eqref{eq:6.10} and \eqref{eq:6.11}, $\big(u,v,\rho\big)(x,y)\in C^{1,1}([0,+\infty]\times\mathbb{R})$, and satisfies \eqref{eq:6.15} in $[0,+\infty]\times\mathbb{R}$.}

The system \eqref{eq:6.1}-\eqref{eq:6.2} is hyperbolic for $u>c$ and has two
eigenvalues
\begin{eqnarray}\label{eq:6.3}
\lambda_{-}=\frac{uv-c\sqrt{u^{2}+v^{2}-c^{2}}}{u^{2}-c^{2}},\quad
\lambda_{+}=\frac{uv+c\sqrt{u^{2}+v^{2}-c^{2}}}{u^{2}-c^{2}},
\end{eqnarray}
which are genuinely nonlinear since
\begin{eqnarray}\label{eq:6.7}
\partial_{Z_{+}}\lambda_{-}>0,\quad \partial_{Z_{-}}\lambda_{+}>0.
\end{eqnarray}
Here $Z_{\pm}$ are the Riemann invariants of system \eqref{eq:6.1}-\eqref{eq:6.2} with
\begin{eqnarray}\label{eq:6.6}
Z_{+}=\theta+\Theta(q),\quad Z_{-}=\theta-\Theta(q),
\end{eqnarray}
where $\Theta(q)=\int^{q}\frac{\sqrt{\tau^{2}-c^{2}(\tau)}}{\tau c(\tau)}d\tau$,
and
$\theta=\arctan\Big(\frac{v}{u}\Big),\ q=\sqrt{u^{2}+v^{2}}.$
Moreover,
\begin{eqnarray}\label{eq:6.8}
\partial_{+}Z_{-}=0,\quad \partial_{-}Z_{+}=0,
\end{eqnarray}
where $\partial_{\pm}:=\partial_{x}+\lambda_{\pm}\partial_{y}$.
Hence, {the global existence of solutions of the initial boundary value problem \eqref{eq:6.1}--\eqref{eq:6.10} yields the global existence of} the following Cauchy problem:
\begin{eqnarray}\label{eq:6.14}
\begin{aligned}
(\check{\mathbf{P}})\qquad \left\{
\begin{array}{llll}
\partial_{-}Z_{+}=0, \qquad  &(x,y)\in \mathbb{R}_{+}\times \mathbb{R},\\
\partial_{+}Z_{-}=0, \qquad  &(x,y)\in \mathbb{R}_{+}\times \mathbb{R},\\
Z_{+}(0,y)=\check{Z}_{+,0},   \qquad  &y\in \mathbb{R},\\
Z_{-}(0,y)=\check{Z}_{-,0}, \qquad   &y\in \mathbb{R},
\end{array}
\right.
\end{aligned}
\end{eqnarray}
where $\check{Z}_{+,0}, \check{Z}_{-,0}$ are functions of  $\check{u}_{0}(y),\check{v}_{0}(y), \check{\rho}_{0}(y)$
{and
$Z_{\pm}$ satisfy
\begin{eqnarray}\label{eq:6.16}
\begin{split}
\|(Z_{+}, Z_{-})-(\underline{Z}_{+},\underline{Z}_{-})\|_{C^{1}([0,+\infty]\times[0,1])}\leq \mathcal{C}\varepsilon.
\end{split}
\end{eqnarray}
Here $\mathcal{C}>0$ and $\underline{Z}_{\pm}$ depend only on $\underline{u}$ and $\underline{\rho}$.}

{For a given $T>0$, suppose that $(Z_{+, 1}, Z_{-,1})$ and $(Z_{+, 2}, Z_{-,2})$ are two   solutions to the problem $(\check{\mathbf{P}})$ satisfying
the same initial data. Denote by $\delta Z_{+}=Z_{+, 1}-Z_{+, 2}$ and $\delta Z_{-}=Z_{-, 1}-Z_{-, 2}$, then $(\delta Z_{+}, \delta Z_{-})$ satisfies the following
\begin{eqnarray}\label{eq:6.17}
\begin{aligned}
\left\{
\begin{array}{llll}
\partial_{-}\delta Z_{+}=\big(\lambda_{-}(Z_{+,1}, Z_{-,1})-\lambda_{-}(Z_{+,2}, Z_{-,2})\big)\partial_{y}Z_{+,2}, \quad  &(x,y)\in \mathbb{R}_{+}\times \mathbb{R},\\
\partial_{+}\delta Z_{-}=\big(\lambda_{+}(Z_{+,1}, Z_{-,1})-\lambda_{+}(Z_{+,2}, Z_{-,2})\big)\partial_{y}Z_{-,2}, \quad  &(x,y)\in \mathbb{R}_{+}\times \mathbb{R},\\
\delta Z_{+}(0,y)=0,   \qquad  &y\in \mathbb{R},\\
\delta Z_{-}(0,y)=0, \qquad   &y\in \mathbb{R},
\end{array}
\right.
\end{aligned}
\end{eqnarray}
So, we can employ the characteristic methods to get that
\begin{eqnarray}\label{eq:6.16}
\begin{split}
\|(\delta Z_{+},\delta Z_{-})\|_{C^{0}(\mathbb{R}_{+}\times \mathbb{R}}
&\leq \mathcal{C}T\|(\partial_{y} Z_{+,2},\partial_{y}Z_{-,2})\|_{C^{0}(\mathbb{R}_{+}\times \mathbb{R}}
\|(\delta Z_{+},\delta Z_{-})\|_{C^{0}(\mathbb{R}_{+}\times \mathbb{R})}\\
& \leq \mathcal{C}\varepsilon\|(\delta Z_{+},\delta Z_{-})\|_{C^{0}(\mathbb{R}_{+}\times \mathbb{R})},
\end{split}
\end{eqnarray}
where $\mathcal{C}$ depends on $T$ and $\underline{u}, \underline{\rho}$.
This implies that $(Z_{+, 1}, Z_{-,1})=(Z_{+, 2}, Z_{-,2})$ for any $(x,y)\in [0,T]\times \mathbb{R}$, \emph{i.e.},
the solutions to the problem $(\check{\mathbf{P}})$ is unique in $C^1(\mathbb{R}_+\times\mathbb{R})$.}

\par On the other hand, notice the fact that {$Z_{\pm,0}(-1)=Z_{\pm,0}(1)$}. Thus,
 if $\check{u}_{0}(y),\check{v}_{0}(y), \check{\rho}_{0}(y)$ are not constants, then there is a point in $[-1,-1]$ such that $\partial_{y}\check{Z}_{+,0}<0$ or
$\partial_{y}\check{Z}_{-,0}<0$. Then, following the result of P. Lax in \cite{pl}, {we obtain the contradiction since the $C^1$ solution of the Cauchy problem \eqref{eq:6.14} will blow up.
Therefore,} we have the following theorem:

\begin{theorem}\label{thm:6.1}
For $\check{q}_{0}>\check{c}_{0}$ and $\gamma>1$, if the incoming flow is not a constant and satisfies \eqref{eq:6.12}, then $\partial_{y}Z_{+}$ or $\partial_{y}Z_{-}$ will
becomes infinity in a finite time, $i.e.$, the solutions to the problem $(\check{\mathbf{P}})$
will blow-up in a finite time.
\end{theorem}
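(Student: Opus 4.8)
The plan is to argue by contradiction. Suppose the initial-boundary value problem \eqref{eq:6.1}--\eqref{eq:6.12} admits a global $C^{1}$ solution $(u,v,\rho)$ on $\Omega=\{0<y<1,\ x>0\}$ obeying the smallness bound \eqref{eq:6.15}; the goal is to upgrade it to a global smooth solution of the one-dimensional-in-disguise Cauchy problem $(\check{\mathbf{P}})$ in \eqref{eq:6.14}, and then invoke the classical blow-up theorem of P. Lax \cite{pl} to reach a contradiction, unless the incoming flow is constant.

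First I would carry out the even-periodic reflection. The wall condition \eqref{eq:6.10} together with the first-order compatibility \eqref{eq:6.11} guarantees that the field obtained from $(u,v,\rho)$ by the even, $2$-periodic extension of \eqref{eq:6.13} is $C^{1,1}$ across the reflection lines $y\in\mathbb{Z}$ and solves the same system \eqref{eq:6.1}--\eqref{eq:6.2} on $[0,+\infty)\times\mathbb{R}$, with $2$-periodic initial data $(\check u_{0},\check v_{0},\check\rho_{0})$. Passing to the Riemann invariants $Z_{\pm}$ of \eqref{eq:6.6}, this gives a global $C^{1}$ solution of $(\check{\mathbf{P}})$ with $2$-periodic $C^{1}$ initial data $\check Z_{\pm,0}$; by the uniqueness argument indicated via \eqref{eq:6.17} it is the unique such solution, and by periodicity of the data it remains spatially $2$-periodic in $y$ for every $x$.

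Next I would exploit the genuine nonlinearity \eqref{eq:6.7}. Differentiating the transport equation $\partial_{-}Z_{+}=0$ in $y$ and restricting to the $\lambda_{-}$-characteristics (along which $Z_{+}$ is constant), the quantity $w:=\partial_{y}Z_{+}$ satisfies a Riccati-type ordinary differential equation $\frac{dw}{ds}=a\,w^{2}+b\,w$ along each such characteristic, in which the coefficient $a$ has a definite sign inherited from $\partial_{Z_{+}}\lambda_{-}>0$; the symmetric statement holds for $\partial_{y}Z_{-}$ along the $\lambda_{+}$-characteristics, using $\partial_{Z_{-}}\lambda_{+}>0$. This is exactly the configuration treated by Lax in \cite{pl}: if $\partial_{y}\check Z_{+,0}<0$ at some point, or $\partial_{y}\check Z_{-,0}<0$ at some point, then $w$ (respectively its $Z_{-}$-analogue) diverges to $-\infty$ at a finite value of $x$, contradicting the assumed global existence.

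It remains to check that this sign hypothesis is unavoidable once the incoming flow is not constant. Since $\check Z_{\pm,0}$ are $C^{1}$ and $2$-periodic, $\int_{-1}^{1}\partial_{y}\check Z_{\pm,0}(y)\,dy=0$, so either $\partial_{y}\check Z_{+,0}$ and $\partial_{y}\check Z_{-,0}$ both vanish identically, which after inverting \eqref{eq:6.6} forces $(\check u_{0},\check v_{0},\check\rho_{0})$ and hence $(u_{0},v_{0},\rho_{0})$ to be constant and contradicts the hypothesis, or one of the two derivatives is strictly negative somewhere and triggers the Lax mechanism above. The main obstacle I expect is in the reflection step: one must verify that the reflected field has precisely the regularity required to propagate the argument across $y\in\mathbb{Z}$ — it is only $C^{1,1}$ there, so $\partial_{y}Z_{\pm}$ is merely Lipschitz — and check that the compatibility conditions \eqref{eq:6.11}--\eqref{eq:6.12} are exactly what is needed to make $\check Z_{\pm,0}\in C^{1}(\mathbb{R})$ with $\partial_{y}\check Z_{\pm,0}$ continuous (and piecewise $C^{1}$) at the reflection lines, so that Lax's characteristic ODE analysis applies there without loss.
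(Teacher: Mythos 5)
Your proposal is correct and follows essentially the same route as the paper: argue by contradiction from a global $C^{1}$ solution, use the wall conditions and compatibility conditions \eqref{eq:6.10}--\eqref{eq:6.12} to perform the even $2$-periodic reflection, pass to the Riemann invariants and the Cauchy problem $(\check{\mathbf{P}})$ with the uniqueness argument of \eqref{eq:6.17}, and then use periodicity of the non-constant data to find a point where $\partial_{y}\check Z_{+,0}<0$ or $\partial_{y}\check Z_{-,0}<0$, triggering finite-time gradient blow-up. The only difference is that you unpack the Riccati-type mechanism along characteristics explicitly, whereas the paper simply cites Lax's theorem \cite{pl}; this is the same argument, not a different approach.
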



\bigskip

\section*{Acknowledgements}
F. Huang's research was supported in part  by NSFC Grant No. 11371349 and Key Research Program of Frontier Sciences, CAS.
D. Wang was supported in part by NSF grants DMS-1312800 and DMS-1613213.
W. Xiang was supported in part by the Research Grants Council of the HKSAR, China (Project No. CityU 21305215, Project No. CityU 11332916 and Project No. CityU 11304817).

\bigskip

\end{document}